\definecolor{Gray}{gray}{0.80}
\definecolor{LightGray}{gray}{0.90}
\newcommand{\cD}{\mathcal{D}}
\newcommand{\cH}{\mathcal{H}}
\newcommand{\cL}{\mathcal{L}}
\newcommand{\cO}{\mathcal{O}}
\newcommand{\bR}{\mathbb{R}}
\newcommand{\bONE}{\mathbbm{1}}
\newcommand{\dd}{ \mathrm{d}}
\renewcommand{\epsilon}{\varepsilon}
\newcommand{\vn}[1]{\left|  \left| #1\right|  \right|}
\numberwithin{equation}{section}
\newtheorem{theorem}{Theorem}[section]
\newtheorem{lemma}[theorem]{Lemma}
\newtheorem{proposition}[theorem]{Proposition}
\newtheorem{corollary}[theorem]{Corollary}
\theoremstyle{definition}
\newtheorem{definition}[theorem]{Definition}
\newtheorem{remark}[theorem]{Remark}
\newtheorem{assumption}[theorem]{Assumption}
\title{Large deviations for slow-fast processes on connected complete Riemannian manifolds}
\author{
	{Yanyan Hu$^{a)}$, Richard C. Kraaij$^{a)}$, Fubao Xi$^{b)}$}\\
\footnotesize$^{a)}${Delft Institute of Applied Mathematics, Delft University of Technology,}\\
\footnotesize{Mekelweg 4, 2628 CD,  Delft, The Netherlands.} \\
\footnotesize$^{b)}${School of Mathematics and Statistics, Beijing Institute of Technology, Beijing 100081, P.R.\ China}\\
\footnotesize$^{*}${Corresponding author's email: y.hu-2@tudelft.nl}
}
\date{}
\begin{document}

\maketitle
\begin{abstract}
We consider a class of slow-fast processes on a connected complete Riemannian manifold $M$.
The limiting dynamics as the scale separation goes to $\infty$ is governed by the averaging principle. Around this limit, we prove large deviation principles with an action-integral rate function for the slow process by nonlinear semigroup methods together with the Hamilton-Jacobi-Bellman equation techniques. The innovation is solving a comparison principle for viscosity solutions on $M$ and the existence of a viscosity solution via a control problem for a non-smooth Hamiltonian.
\\[0.2cm]
\noindent 
\emph{Keywords: Riemannian manifold; Large deviation principle; Hamilton-Jacobi-Bellman equations; Comparison principle; Action-integral representation} 

\noindent \emph{MSC: primary 60F10; 60J25; secondary 60J35; 49L25} 
\end{abstract}




\section{Introduction}\label{se1}
In this paper, let $M$ be a $d$-dimensional connected complete Riemannian manifold. We consider a stochastic differential equation on 
$M$ with initial value $(x_0,k_0)$:
\begin{equation}\label{eqn_slowCIR}   \mathrm{d}X^\varepsilon_n(t)=\frac{1}{\sqrt{n}}u^\varepsilon_n(t) \circ\mathrm{d}W(t)+b(X^\varepsilon_n(t),\Lambda^\varepsilon_n(t))\mathrm{d}t,
\end{equation}
where $\Lambda_n^\varepsilon(t)$ is a switching process with transition rate on a set $S=\{1,2,\ldots,N\}$, $N<\infty$,
\begin{equation}\label{eqn_fastswitching}
	\mathbb{P} (\Lambda^{\varepsilon}_n(t+\triangle)=j~|~\Lambda^{\varepsilon}_n(t)=i, X^{\varepsilon}_n(t)= x)=
	\begin{cases}
		\frac{1}{\varepsilon}q_{ij}(x)\triangle+ \circ(\triangle) ,&\mbox{if $j\neq i$,}\\
		1+\frac{1}{\varepsilon}q_{ij}(x)\triangle +\circ(\triangle),&\mbox{if $j=i$,}
	\end{cases}
\end{equation}
for small $\Delta >0$, $i,j \in S$, $x\in M$, and $\varepsilon>0$ is a small parameter. $u^\varepsilon_n(\cdot)$ is a unique element such that $X^\varepsilon_n(t)=\mathbf{p}u^\varepsilon_n(t)$, where $\mathbf{p}:O(M)\to M$ is a projection map. Precise details and conditions on this system will be specified later. Obviously, \eqref{eqn_slowCIR} and \eqref{eqn_fastswitching} together is a slow-fast system. 
\par
It is not too difficult to see that under some conditions, the effective behavior of the slow process 
\eqref{eqn_slowCIR} can be accurately described by the averaged system as $\varepsilon\to 0$ and $n\to \infty$, utilizing the averaging principle.
To be precise, given a fixed slow process and subject to appropriate conditions, the fast process has an invariant probability measure. This observation implies that, as $\varepsilon\to 0$ and $n\to \infty$, the slow process converges to an averaged process defined as follows
\begin{equation*}
    \mathrm{d}\bar{X}(t)=\bar{b}(\bar{X}(t))\mathrm{d}t,
\end{equation*}
where $\bar{b}(x)=\sum_{i\in S}b(x,i)\pi^x_i(t)$ and $\pi^x(t)=(\pi^x_i(t))_{i\in S}$ is the unique invariant probability measure of the fast process with the slow variable being ``frozen'' at a deterministic point $x\in M$.
The application of this averaging principle provides an effective method to reduce computational complexity. It can be viewed as a variant of the law of large numbers.
In contrast to the averaging principle, the large deviation principle (LDP) excels in providing a more precise capture of dynamical behavior. In other words, it specifically addresses the characterization of the exponential decay rate associated with probabilities of rare events.
\par
The main purpose of this paper is to prove a LDP around such averaged process on $M$. The theory of LDP is one of the classical topics in probability theory, for example, \cite{DZ1998,DH2008,FK2006}, which has widespread applications in different areas such as information theory, thermodynamics, statistics, and engineering.
\par
Let us mention some works related to our purposes. Huang, Mandjes and Spreij
\cite{HM2016}, they studied large deviations for Markov-modulated diffusion processes with rapid switching. 
In \cite{PS2021}, Peletier and Schlottke proved pathwise LDP of switching Markov processes by exploiting the connection between Hamilton-Jacobi (HJ) equations and Hamilton-Jacobi-Bellman (HJB) equations.
In \cite{KS2020}, Kraaij and Schlottke studied the LDP for the slow-fast system under regular conditions, where the fast process is a switching process. For the proof, they used the Bootstrapping procedure, which is a technology for comparison principle of the HJB equation. Later, Della Corte and Kraaij \cite{CK2024} continued to explore LDP in the context of molecular motors modeled by a diffusion process driven by the gradient of a weakly periodic potential that depends on an internal degree of freedom. The switch of the internal state, which can freely be interpreted as a molecular switch, is modeled as a Markov jump process that depends on the location of the motor. Subsequently, Hu, Kraaij, and Xi \cite{HKX2023} considered the  Cox-Ingersoll-Ross processes  with state-dependent fast switching in the case of degenerate.
\par
Although there are extensive results on LDPs for slow-fast systems on Euclidean space, there is not much work on the topic of Reimannian manifold. 
R\"{o}ckner and Zhang \cite{RZ2004} studied sample path large deviations for diffusion processes on configuration spaces over a Riemannian manifold. Kraaij, Redig and Versendaal \cite{KRV2019} generalized classical large deviation theorems to the setting for complete, smooth Riemannian manifolds. However, they focused on the simple setting of random walks rather than slow-fast systems. 
Versendaal \cite{V2020} studied large deviations for Brownian motion in evolving Riemannian manifolds.
\par
Motivated by the aforementioned papers about LDP for slow-fast processes on Euclidean space and simple LDP on Riemannian manifold, it is a natural question to ask how to generalize on the Riemannian manifold for the slow-fast processes. In this paper, we reply to this question. That is we
prove LDPs with an action-integral rate function for the slow process by nonlinear semigroup methods together with the HJB equation techniques. Note that our drift coefficient of slow process only satisfies locally one-sided Lipschitz continuity, which is weaker than the bounded condition. Moreover, the rate functions are related to the Hamiltonian $\mathcal{H}:T^*M\to \mathbb{R}$ obtained by taking the Lagrangian transforms of $\mathcal{L}:TM\to \mathbb{R}$. One formally defines that
\begin{equation}\label{eqn_bfHH}
\begin{split}
	\mathbf{H}f(x)=\mathcal{H}(x,\mathrm{d}f(x))=\sup_{\pi\in\mathcal{P}(S)}\big\{\int_M B_{x,\mathrm{d}f(x)}(z)\pi(\text{d}z)-\mathcal{I}(x,\pi)\big\},
\end{split}
\end{equation}
where
\begin{equation*}
	B_{x,\mathrm{d}f(x)}(z)=b(x,z) \mathrm{d} f(x)+\frac{1}{2}
	\left|\mathrm{d}  f(x)\right|^2
\end{equation*}
coming from the slow process $X_n(t)$ and Donsker-Varadhan function
\begin{equation*}
	\mathcal{I}(x,\pi)=-\inf_{g>0}\int_M\frac{R_xg(z)}{g(z)}\pi(\text{d}z),
\end{equation*}
where $R_x$ is the generator corresponding to the fast process $\Lambda_n(t)$ defined by 
\begin{equation*}
	R_xg(z)=\sum_{j\in S }q_{zj}(x)\left(g(j)-g(z)\right).
\end{equation*}	
Although following the proof ideas from Feng and Kurtz’s book \cite{FK2006}, considering the comparison principle and the existence of HJB equations, we need to put forward some new tricks to show those owing to the special properties of the Riemannian manifold.
\par
We first discover special properties on $M$, which have caused difficulties but also is the key innovation in our proof:
\begin{enumerate}[(i)]
    \item The first one, to ensure the exponential tightness, we find a good containment function:
    \begin{equation*}
\Upsilon(x)=\frac{1}{2}\log(1+f^2(x)), 
\end{equation*}
where the smooth function $f(x)$ approximates $d(x_0,x)$ for some $x_0\in M$ and satisfying formally $\sup _z \mathcal{H}(z,\mathrm{d}\Upsilon(z))<C<\infty$ which plays the role of a relaxed Lyapunov. 
 \item The second one, the distance function $d(x,y)$, $x$, $y\in M$ is no longer smooth, although $x$ is close to $y$. This happens because the shortest path (geodesic) between two points may not be unique, for example, a sphere $\mathbb{S}^d$, $d$ is an integer. Cut-locus is essentially the boundary of the region that can be reached by geodesics originating from a specific point. 
    \item The third one, we need to prove the global existence of solutions for a HJB equation on $M$ to obtain an action-integral rate function. To establish existence we need to solve an appropriate control problem. A key obstacle is the construction from local solutions to global solutions.
\end{enumerate}
\par
\textbf{Organization:}
The organization of our paper is as follows:
in \Cref{sec_statement_main_result}, we introduce fundamental concepts related to the large deviation principle.
In \Cref{sec_establishing_diffusion}, we construct a diffusion process with fast switching on the Riemannian manifold, and state our main results. Subsequently, in \Cref{sec:The_strategy}, we articulate the strategy employed in proving the large deviation. In \Cref{sec:the_proof_of_main_theorem}, We provide a comprehensive proof of the main theorem concerning the large deviation.

\section{Preliminaries}\label{sec_statement_main_result}
The following convention will be used throughout the paper: $C$ and $c$ with or without indices will denote diﬀerent positive constants whose values may change from one place to another.
\par
We begin with the necessary definitions for introducing the large deviation principle on Riemannian manifold $M$. The basic knowledge about Riemannian manifold is useful in this paper, and we have placed it in \Cref{se_Riemannian_manifold} to highlight our main results.
\begin{definition}
    Consider a sequence of $X_1,X_1,\ldots$ on complete Riemannian manifold $M$. Furthermore let $I: F\to [0,\infty]$.
    \begin{enumerate}
    \item We say that $I$ is a good rate function if for every $c\geq 0$, the set $\{x~|~I(x)\leq c\}$ is compact.
        \item We say that the sequence $\{X_n\}_{n\geq 1}$ is \textit{exponentially tight} if for all $\alpha>0$ there exists a compact set $K_\alpha \subseteq F $ such that 
        \begin{equation*}
            \limsup_{n\to \infty}\frac{1}{n}\log X_n(K^c_\alpha)<-\alpha.
        \end{equation*}
        \item We say that the sequence $\{X_n\}_{n\geq 1}$ satisfies the \textit{large deviation principle} with rate $n$ and good rate function $I$, denoted by
        \begin{equation}
            \mathbb{P}[X_n \backsim a]\approx \mathrm{e}^{-nI(a)},
        \end{equation}
        \begin{enumerate}[(i)]
        \item if we have for every closed set $A\subseteq F$ the upper bound,
            \begin{equation*}
                \limsup_{n\to \infty} \frac{1}{n}\log\mathbb{P}[X_n\in A]\leq -\inf_{x \in F}I(x).
            \end{equation*}
                \item and for every open set $U\subseteq F$ the lower bound,
            \begin{equation*}
                \liminf_{n\to \infty}\frac{1}{n}\log \mathbb{P}[X_n\in U]\geq -\inf_{x\in U}I(x).
            \end{equation*}
            \end{enumerate}
            \end{enumerate}
\end{definition}
\begin{definition}[Absolutely continuous curves]
	We denote by $\mathcal{AC}(M)$ the space of absolutely continuous curves in $M$. 
	A curve $\gamma:[0,T]\to M$ is absolutely continuous if there exists a function $g\in L^1[0,T]$ such that for $t\in[0,T]$ we have $\gamma(t)=\gamma(0)+\int^t_0g(s)\text{d}s$. We write $g=\dot{\gamma}$.
	
	A curve $\gamma:[0,\infty)\to M$ is absolutely continuous, i.e. $\gamma\in \mathcal{AC}(M)$, if the restriction to $[0,T]$ is absolutely continuous for every $T>0$.
\end{definition}

\section{Constructing a diffusion process with fast switching on Riemannian manifold}\label{sec_establishing_diffusion}

In the above section, we only gave the basic knowledge about the large deviation principle. We next state the definition of the orthonormal frame bundle and horizontal lift to construct a diffusion process with switching on $M$ as a model to study large deviation principle, i.e., the slow-fast systems \eqref{eqn_slowCIR} and \eqref{eqn_fastswitching}.
\par
Let $O_x(M)$ be the space of all orthonormal bases of $T_xM$. Denote
$O(M)~:~=\cup_{x\in M} O_x(M)$, which is called the \textit{orthonormal frame bundle} over $M$. Obviously, $O_x(M)$ is isometric to $O(d)$, the group of orthogonal $(d \times d)$-matrices.  
\par
To see that $O(M)$ has a natural Riemannian structure, let $\mathbf{p}: O(M)\to M$ with $\mathbf{p}u := x$ if $u\in O_x(M)$, which is called the \textit{canonical projection} from $O(M)$ onto $M$.
Now, given $e\in \mathbb{R}^d$, our goal is to define the corresponding horizontal vector field on $O(M)$. On the one hand, for any $u \in O(M)$ we have $ue \in T_{\mathbf{pu}}M$. Let $u_s$ be the parallel transportation of u along the geodesic $\exp_{\mathbf{p}u}(sue)$, $s \geq 0$. We obtain a vector 
\begin{equation*}H_e (u):=\frac{\mathrm{d}}{\mathrm{d}s} u_s|_{s=0} \in T_u O(M).
\end{equation*} Thus, we have defined a vector filed $H_e$ on $O(M)$ which is indeed $C^\infty$-smooth. In particular, let $\{e_i\}_{i=1}^d$ be an orthonormal basis on $\mathbb{R}^d$, deﬁne
\begin{equation*}
\Delta_{O(M)}:=\sum^{d}_{i=1}H^2_{e_i}.
\end{equation*}
It is easy to see that this operator is independent of the choice of the basis $\{e_i\}$. We call $\Delta_{O(M)}$ \textit{the horizontal Laplace operator}. On the other hand, for any vector field $Z$ on $M$, we define its \textit{horizontal lift} by $\mathbf{H}_{Z}(u):=H_{u^{-1}Z}(u)$, $u\in O(M)$, where $u^{-1}Z$ is the unique vector $e\in \mathbb{R}^d$ such that $Z_{\mathbf{p}u}=ue$.
\par
Let $\Delta_M$ be a Laplace-Beltrami operator,
\begin{equation}\label{eqn_Laplace_Beltrami_operator}
	\Delta_M f=\frac{1}{\sqrt{G}}\frac{\partial}{\partial x^i}\left(\sqrt{G} g^{ij}\frac{\partial f}{\partial x^j} \right),~~~f\in C^1(M).
\end{equation}
We have the conclusion below, the horizontal Laplacian $\Delta_{O(M)}$ is the lift of the Laplace-Beltrami operator $\Delta_M$ to the orthonormal frame bundle $O(M)$. 
\begin{lemma}[Proposition 3.1.2 of \cite{H2002MR1882015}]
	Let $f\in C^{\infty}(M)$, and $\Tilde{f}=f\circ \mathbf{p}$ its lift to $O(M)$. Then for any $u\in O(M)$,
	\begin{equation*}
		\Delta_Mf(x)=\Delta_{O(M)}\Tilde{f}(u),
	\end{equation*}
	where $x=\mathbf{p}u$.
\end{lemma}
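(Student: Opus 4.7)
The plan is to reduce the identity $\Delta_M f(x) = \Delta_{O(M)}\tilde f(u)$ to a pointwise computation at a chosen $u \in O_x(M)$, using the fact that $u$ is an isometry $\mathbb{R}^d \to T_xM$ and that the horizontal vectors $H_e(u)$ are tangent to horizontal lifts of geodesics. Fix $u\in O(M)$, set $x=\mathbf{p}u$, and let $\{e_i\}_{i=1}^d$ be the standard orthonormal basis of $\mathbb{R}^d$, so that $\{u e_i\}_{i=1}^d$ is an orthonormal basis of $T_xM$.

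First I would compute the first-order derivative. By definition $H_e(u) = \frac{d}{ds}|_{s=0} u_s$, where $u_s$ is the horizontal lift of the geodesic $\gamma(s) := \exp_{x}(s\, ue)$ starting at $u$. Since $\tilde f = f\circ \mathbf{p}$ and $\mathbf{p}(u_s)=\gamma(s)$, the chain rule gives
\begin{equation*}
    H_e \tilde f(u) \;=\; \frac{d}{ds}\Big|_{s=0} f(\gamma(s)) \;=\; df_x(ue).
\end{equation*}
More generally, for any $u'\in O(M)$ one has $H_e \tilde f(u') = df_{\mathbf{p}u'}(u'e)$, since the same computation applies replacing $u$ with $u'$.

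Next I would compute the second derivative along the horizontal curve $u_s$. Because $u_s$ is by definition the parallel transport of $u$ along $\gamma$, the vector $u_s e \in T_{\gamma(s)}M$ is the parallel transport of $ue$ along $\gamma$. Using the previous formula at $u'=u_s$,
\begin{equation*}
    H_e^2 \tilde f(u) \;=\; \frac{d}{ds}\Big|_{s=0} H_e\tilde f(u_s) \;=\; \frac{d}{ds}\Big|_{s=0} df_{\gamma(s)}(u_s e).
\end{equation*}
Since $u_s e$ is parallel along $\gamma$ and $\dot\gamma(0)=ue$, the intrinsic definition of the Hessian, $\nabla^2 f(X,Y) = X(Yf) - (\nabla_X Y)f$, gives $H_e^2 \tilde f(u) = \nabla^2 f_x(ue, ue)$. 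Summing over $i$ and using that $\{ue_i\}$ is orthonormal in $T_xM$,
\begin{equation*}
    \Delta_{O(M)}\tilde f(u) \;=\; \sum_{i=1}^d H_{e_i}^2 \tilde f(u) \;=\; \sum_{i=1}^d \nabla^2 f_x(ue_i, u e_i) \;=\; \mathrm{tr}_{g}\nabla^2 f(x) \;=\; \Delta_M f(x),
\end{equation*}
which is the desired identity; the last equality is the coordinate-free expression of the Laplace--Beltrami operator in \eqref{eqn_Laplace_Beltrami_operator} as the metric trace of the Hessian.

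I do not expect a serious obstacle here: the argument is entirely local at a single fibre point $u\in O(M)$, and the only non-trivial input is the identification of $\frac{d}{ds}|_{s=0} df_{\gamma(s)}(u_s e)$ with $\nabla^2 f_x(ue,ue)$, which follows immediately from parallelism of $u_s e$ along $\gamma$. The mild subtlety to be careful about is that $H_e \tilde f(u')$ must be evaluated as a function of $u'$ before differentiating a second time, so that the horizontality (parallelism) of the frame $u_s$ can be used; once that is set up correctly, the rest is definitional.
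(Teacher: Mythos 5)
The paper does not give its own proof of this lemma; it is cited directly from Hsu's \emph{Stochastic Analysis on Manifolds} (Proposition 3.1.2), and your argument is precisely the standard proof given there. The key steps — $H_e\tilde f(u')=df_{\mathbf{p}u'}(u'e)$, the integral curve of $H_e$ through $u$ being the horizontal lift $u_s$ along which $u_s e$ is parallel, hence $H_e^2\tilde f(u)=\nabla^2 f_x(ue,ue)$, and summing over an orthonormal frame to recover the trace of the Hessian — are all correct and complete.
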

\par
Having the preparations of orthonormal frame bundle and horizontal lift, we can establish a diffusion process \eqref{eqn_slowCIR} with switching \eqref{eqn_fastswitching} on $M$ in detail. To this end, we divide it into two steps:

\underline{Step 1: A SDE with a fixed switching state.} 

Let $b:\mathbb{R}^d\to TM$ be a $C^1$-smooth vector field on $M$. According to the idea of \cite[Section 2.1]{W2014MR3154951}, we study a diffusion process generated by $A^{M}_n:=\frac{1}{2n}\Delta_M+b$, where $\Delta_M$ is a Laplace-Beltrami operator in \eqref{eqn_Laplace_Beltrami_operator}.

	To this end, we first construct the corresponding \textit{Horizontal diffusion process} generator by 
	$A^{O(M)}_n:=\frac{1}{n}\Delta_{O(M)}+\mathrm{H}_{b}$ on $O(M)$ by solving the
	Stratonovich stochastic differential equation 
	\begin{equation*}
		\mathrm{d}u_n(t)=\frac{1}{\sqrt{n}}\sum_{j=1}^d H_{e_j}(u_n(t))\circ \mathrm{d}W^i(t)+{H}_{b}(u_n(t))\mathrm{d}t,~~u_n(0)=u\in O(M),
	\end{equation*}
	where $W(t):=(W^1(t),\ldots,W^d(t))$ is the $d$-dimensional Brownian motion on a complete filtered probability space $(\Omega,~\mathcal{F},~\{\mathcal{F}_t\}_{t\geq 0},~\mathbb{P})$. Since ${H}_b$ is $C^1$, it is well known that (see e.g. \cite[Chapter IV, Section 6]{E1982MR675100}) the equation has a unique solution up to the life time $\zeta:= \lim_{j\to \infty} \zeta_j$, where
	\begin{equation*}
		\zeta_j:=\inf\{t\geq 0:\mathrm{d}(\mathbf{p}u,~\mathbf{p}u_n(t))\geq j\},~~j\geq 1.
	\end{equation*}
	
	Let $X_n(t)=\mathbf{p} u_n(t)$. Then $X_n(t)$ solves the equation
	\begin{equation}\label{eqn_SDEmanifold}
		\mathrm{d}X_n(t)=\frac{1}{\sqrt{n}}u_n(t) \circ\mathrm{d}W(t)+b(X_n(t))\mathrm{d}t,~~~X_n(0)=x_0: =\mathbf{p}u
	\end{equation}
	up to the lifetime $\zeta$.
	By the It\^{o} formula, for any $f(\cdot)\in C^2_0(M)$,
	\begin{equation*}
		f(X_n(t))-f(x_0)-\int^t_0A_n^{M}f(X_n(s))\mathrm{d}s=\frac{1}{\sqrt{n}}\int^t_0\langle (u_n(s))^{-1
		}\mathrm{grad}f(X_n(s)),\mathrm{d}W(s)\rangle
	\end{equation*}
	is a martingale up to the life time $\zeta$; that is $X^\varepsilon_n(t)$ is the diffusion process generated by $A^M_n$, and we call it the $A^M_n$-diffusion process. When $b=0$, then $X_n(t)$ is generated by $\frac{1}{2n}\Delta_M$ and is called the Brownian motion on $M$.
	
	\underline{Step 2: the SDE with switching for any states.} 
	Here, we are going to introduce SDE with switching in \eqref{eqn_SDEmanifold}. To do it, for $S=\{1,2\ldots,N\}$, $N<\infty$, we let the drift coefficient of the slow process depend on $i\in S$, where $i$ stands for the state of the switching process. For the SDE with the fixed $i$, one can refer to step 1. But for any $i \in S$, we need an infinite lifetime referring to the process.
	\par
	With the short and useful analysis in our mind, we cite \Cref{lem_Mass_conservation} below from the book \cite[Theorem 3.2.6]{BGL2014MR3155209} with $\rho=\rho(n)$ and $b=Z$ to guarantee infinite lifetime.
	\begin{lemma}[Mass conservation \cite{BGL2014MR3155209}]\label{lem_Mass_conservation}
		Let $\mathcal{L}$ be an elliptic diffusion operator with semigroup $\mathbf{P}=(P_t)_{t\geq 0}$, symmetric with respect to a positive measure $\mu$, on a smooth complete connected manifold $M$. If the $CD(\rho,\infty)$ curvature condition 
		\begin{equation*}
			\mathcal{R}(L)=\mathcal{R}_g-\nabla Z\geq \rho g
		\end{equation*}
		holds for some $\rho\in \mathbb{R}$, then for all $t\geq 0$, $P_t(\bONE)=\bONE$.
		$\mathcal{R}_g$ is the Ricci tensor of the (co)-metric $g$. 
	\end{lemma}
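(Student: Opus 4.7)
The plan is to follow the Bakry-Émery $\Gamma_2$-philosophy: first derive a quantitative commutation estimate for the semigroup from the curvature-dimension condition, then combine it with geodesic completeness to rule out loss of mass at infinity. Throughout, write $\Gamma(f) = |\nabla f|^2$ and $\Gamma_2(f) = \tfrac{1}{2}\mathcal{L}\Gamma(f) - \Gamma(f,\mathcal{L}f)$ for the carré du champ and its iterate.

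The first step is to establish the gradient bound $\Gamma(P_tf) \leq e^{-2\rho t}\,P_t\Gamma(f)$ for $f \in C^\infty_c(M)$. The $CD(\rho,\infty)$ hypothesis is equivalent, via the Bochner-Weitzenböck identity, to the pointwise Bakry-Émery inequality $\Gamma_2(f) \geq \rho\,\Gamma(f)$. To exploit it, I differentiate the interpolation $\Psi(s) := P_s\bigl(\Gamma(P_{t-s}f)\bigr)$ on $[0,t]$, using $\partial_s P_s = \mathcal{L}P_s$, and obtain $\Psi'(s) = 2\,P_s\bigl(\Gamma_2(P_{t-s}f)\bigr) \geq 2\rho\,\Psi(s)$. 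Gronwall then yields the desired bound.

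The second step uses this estimate to control the semigroup acting on cutoffs of $\mathbbm{1}$. Hopf-Rinow gives compact closed metric balls, so I can construct $\chi_R \in C^\infty_c(M)$ with $0 \leq \chi_R \leq 1$, $\chi_R \equiv 1$ on $B(x_0,R)$, and $\|\nabla\chi_R\|_\infty \leq C/R$. The bound from Step 1 gives $\|\nabla P_t\chi_R\|_\infty^2 \leq e^{-2\rho t}C^2/R^2 \to 0$ as $R\to\infty$. Monotone convergence yields $P_t\chi_R \uparrow P_t\mathbbm{1} =: h_t$ pointwise, and passing the gradient estimate to the limit forces $h_t$ to be independent of $x$; write its value as $c_t \in [0,1]$. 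The semigroup law then gives $c_{t+s} = P_{t+s}\mathbbm{1} = P_t(c_s \mathbbm{1}) = c_s c_t$, so $c_t = e^{-\lambda t}$ for some $\lambda \geq 0$. Since $\mathcal{L}$ is a pure second-order diffusion operator with no zero-order term, $\mathcal{L}\mathbbm{1} = 0$, whence $-\lambda = \partial_t c_t|_{t=0} = 0$ and $c_t \equiv 1$, which is the claim.

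The main obstacle lies in rigorously justifying the $\Gamma_2$-computation of Step 1 and the limiting argument of Step 2: one needs $P_{t-s}f$ to remain in an algebra on which the $\Gamma_2$-identity is available and on which $\Psi$ is smooth, and the pointwise convergence $P_t\chi_R \to P_t\mathbbm{1}$ must be compatible with the uniform gradient control. Both rely crucially on completeness of $M$, via essential self-adjointness of $\mathcal{L}$ on $C^\infty_c(M)$ and the existence of exhaustive Lipschitz cutoffs with arbitrarily small gradient; these are exactly the ingredients that prevent the heat flow from transporting mass to infinity in finite time.
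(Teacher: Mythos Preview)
The paper does not prove this lemma at all: it is quoted verbatim as \cite[Theorem 3.2.6]{BGL2014MR3155209} and used as a black box to guarantee non-explosion of the diffusion with a fixed switching state. So there is no ``paper's own proof'' to compare against.

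Your proposal is the standard Bakry--\'Emery argument, and it is exactly how the cited reference \cite{BGL2014MR3155209} proves the result. The two-step structure --- first the local gradient commutation $\Gamma(P_t f)\le e^{-2\rho t}P_t\Gamma(f)$ from $CD(\rho,\infty)$ via differentiating $s\mapsto P_s\Gamma(P_{t-s}f)$, then the global conclusion by testing against an exhaustive sequence of Lipschitz cutoffs with vanishing gradient --- is precisely the strategy of Theorem~3.2.6 there. You also correctly flag the genuine analytic subtleties: the a~priori regularity needed to run the $\Gamma_2$ computation (handled in \cite{BGL2014MR3155209} by working first on a core and invoking essential self-adjointness, which completeness provides), and the passage to the limit in the gradient estimate. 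These are real issues, but they are resolved in the cited source, and your sketch identifies them accurately. In short: your proof is correct and is the proof the paper is implicitly invoking by citation.
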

	
In this setting, \eqref{eqn_SDEmanifold} with initial value $(X^\varepsilon_n(0),~\Lambda^\varepsilon_n(0))=(x_0,k_0)$ becomes
\begin{equation}\label{eqn_goalSDE}
	\mathrm{d}X^\varepsilon_n(t)=\frac{1}{\sqrt{n}}u^\varepsilon_n(t) \circ\mathrm{d}W(t)+b(X^\varepsilon_n(t),\Lambda^\varepsilon_n(t))\mathrm{d}t,
\end{equation}
where $\Lambda_n^\varepsilon(t)$ is a switching process with transition rate 
\begin{equation}\label{eqn_goalswitch}
	\mathbb{P} (\Lambda^{\varepsilon}_n(t+\triangle)=j~|~\Lambda^{\varepsilon}_n(t)=i, X^{\varepsilon}_n(t)= x)=
	\begin{cases}
		\frac{1}{\varepsilon}q_{ij}(x)\triangle+ \circ(\triangle) ,&\mbox{if $j\neq i$,}\\
		1+\frac{1}{\varepsilon}q_{ij}(x)\triangle +\circ(\triangle),&\mbox{if $j=i$,}
	\end{cases}
\end{equation}
for small $\Delta >0$, $i,j \in S$, $x\in M$, and $\varepsilon>0$ is a parameter. The system 
\eqref{eqn_goalSDE} and \eqref{eqn_goalswitch} is a diffusion system with state-dependent fast switching.
\subsection{The main results}
In this paper, we consider the slow-fast systems \eqref{eqn_goalSDE} and \eqref{eqn_goalswitch}.
We first collect all the assumptions that are needed before giving the main results.

\begin{assumption}\label{assu_varn_manifold}
	Let $\varepsilon=\frac{1}{n}$, this shows that small disturbance and fast switching have the same rate.
\end{assumption}

This assumption means that the slow-fast system
      \eqref{eqn_goalSDE} and \eqref{eqn_goalswitch} becomes 
\begin{equation}\label{eqn_1goalSDE}
    \mathrm{d}X_n(t)=\frac{1}{\sqrt{n}}u_n(t) \circ\mathrm{d}W(t)+b(X_n(t),\Lambda_n(t))\mathrm{d}t,
\end{equation}
and
\begin{equation}\label{eqn_1goalswitch}
	\mathbb{P} (\Lambda_n(t+\triangle)=j~|~\Lambda_n(t)=i,~ X_n(t)= x)=
	\begin{cases}
		nq_{ij}(x)\triangle+ \circ(\triangle) ,&\mbox{if $j\neq i$,}\\
		1+nq_{ij}(x)\triangle +\circ(\triangle),&\mbox{if $j=i$.}
	\end{cases}
\end{equation}
In the following, we will focus on \eqref{eqn_1goalSDE} and \eqref{eqn_1goalswitch}.
\begin{assumption}\label{ass_b_linear_growth} Fix $x_0\in M$ and define $r(x)=d(x,x_0)$. We say that $b$ is linear growth if there exists a constant $C>0$ such that, for all $x\in M$,
\begin{equation*}
    |b(x,i)|\leq C(1+r(x)),~~~\forall~i\in S.
\end{equation*}
\end{assumption}

\begin{assumption}\label{ass_b_one_side}
    We say that $b$ is locally one-sided Lipschitz function if for any compact sets $K\subseteq M$, there exists a constant $C_K> 0$ such that, for all $x$, $y\in K$, it holds that \begin{equation*}
    \mathrm{d}_x\left(\frac{1}{2}d^2(\cdot, y)\right)(x)b(x,i)-\mathrm{d}_y\left(-\frac{1}{2}d^2(x,\cdot)\right)(y)b(y,i)\leq C_Kd^2(x,y),~~~\forall i\in S,
    \end{equation*}
    where $d(x,y)<i(K)$ and $i(K)$ is the injectivity radius of $K$ defined in \Cref{se_Riemannian_manifold}.
\end{assumption}

\begin{assumption}\label{asm_conservative_manifold}
	For any $x\in M$, $(q_{ij}(x))_{i,j\in S}$ is a conservative, irreducible transition rate matrix, and $\sup_{i\in S}q_i(x)<\infty$, where $q_i(x)=-q_{ii}(x)=\sum_{j\in S,j\neq i}q_{ij}(x)$.
\end{assumption}

\begin{assumption}\label{asm_conti_manifold}
	For any compact sets $K\subseteq M$, there exists a constant $C_K>0$ such that 
	\begin{equation*}
		|q_{ij}(x)-q_{ij}(y)|\leq C_Kd(x,y),~~x,~y\in K,~i,~j \in S.
	\end{equation*}
\end{assumption}

\begin{assumption}\label{ass_bfH_bounded}
    Let $M$ be a Riemannian manifold, there exists a operator $\mathbf{H}\subseteq C_b^1(M)\times C_b(M)$ with domain
    \begin{equation*}
        \mathcal{D}(\mathbf{H})=\{f\in C^1_b(M)~| ~ c_f = \sup_{z} \left\{ -\mathbf{H}f(z) \right\} < \infty \}.
    \end{equation*}
\end{assumption}
Then, we give some remarks on these assumptions.
\begin{itemize}
    
   \item \Cref{ass_b_linear_growth} controls the rate at which the process may deviate to prove exponential tightness.
  \item  \Cref{ass_b_one_side} is set for proving the comparison principle.
\item
 Assumptions \ref{asm_conservative_manifold} and \ref{asm_conti_manifold} of a fast switching process for any given $x$ ensures the existence of an invariant probability measure that satisfies the averaging principle.
\item \Cref{ass_bfH_bounded} is a regularity condition to obtain optimally controlled curves that are needed in the proof of variational representation of rate function.
\end{itemize}
Although we have constructed the slow-fast process at the beginning of this section, we emphasize the well-posedness of this process with a theorem.
\begin{theorem}
	The system, \eqref{eqn_1goalSDE} and \eqref{eqn_1goalswitch}, has a  unique non-explosive strong solution $(X_n(t),\Lambda_n(t))$ with initial value $(X_n(0),\Lambda_n(0))=(x_0,k_0)$. 
\end{theorem}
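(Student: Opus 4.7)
The plan is to build $(X_n(t),\Lambda_n(t))$ by the classical interlacing construction: between consecutive jumps of $\Lambda_n$ the slow component solves a Stratonovich SDE on $M$ with the drift frozen at the current switching state $b(\cdot, i)$ (as constructed in Step 1 above via the horizontal lift on $O(M)$), and the jumps of $\Lambda_n$ are realized by a random time change with state-dependent rate $n\, q_{\Lambda_n(t)}(X_n(t))$. Concretely, given an independent sequence $\{E_k\}_{k\geq 1}$ of i.i.d.\ $\mathrm{Exp}(1)$ random variables, I would define the successive jump times inductively by
\begin{equation*}
\tau_{k+1}=\inf\Big\{t>\tau_k~:~\int_{\tau_k}^{t} n\, q_{\Lambda_n(\tau_k)}(X_n(s))\,\mathrm{d}s\geq E_{k+1}\Big\},
\end{equation*}
with the post-jump state sampled according to $q_{\Lambda_n(\tau_k)\,j}(X_n(\tau_k))/q_{\Lambda_n(\tau_k)}(X_n(\tau_k))$. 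This reduces the theorem to two independent tasks: (a) global existence and pathwise uniqueness of \eqref{eqn_SDEmanifold} with fixed $i\in S$ on every interval $[\tau_k,\tau_{k+1})$, and (b) showing $\tau_k\to\infty$ a.s.

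For (a), Step 1 together with $b(\cdot,i)\in C^1$ already gives local existence and uniqueness up to the lifetime $\zeta=\lim_j\zeta_j$. To push $\zeta$ to $+\infty$ I would use Assumption \ref{ass_b_linear_growth}: apply It\^o's formula to the relaxed Lyapunov function $V(x)=\log(1+r^2(x))$ (with $r(x)=d(x_0,x)$ replaced by a smooth Greene--Wu approximation outside a neighborhood of $x_0$ to avoid the cut locus), and check that $|\mathrm{d}V|$ is bounded and $\Delta_M V$ is at most linear, so that the linear growth of $b$ yields $A_n^{M}V(x)\leq C(1+V(x))$. Gronwall then gives $\mathbb{E}[V(X_n(t\wedge\zeta_j))]\leq (V(x_0)+Ct)e^{Ct}$, forcing $\zeta_j\to\infty$. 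Pathwise uniqueness on each fixed-$i$ interval follows from Assumption \ref{ass_b_one_side}: coupling two solutions synchronously through the same horizontal Brownian motion, and differentiating $d^2(X_n^1(t),X_n^2(t))$ inside the injectivity radius of a local compact $K$, the one-sided Lipschitz bound gives $d^2(X_n^1(t),X_n^2(t))\leq C_K\int_0^t d^2(X_n^1(s),X_n^2(s))\,\mathrm{d}s$.

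For (b), Assumptions \ref{asm_conservative_manifold} and \ref{asm_conti_manifold} imply that $x\mapsto q_i(x)$ is locally bounded; combined with the moment bound on $V(X_n)$ from (a), for any $T>0$ the process $X_n$ stays in a compact set with high probability, hence the rate $q_{\Lambda_n(t)}(X_n(t))$ is dominated by a deterministic constant on $[0,T]$ on that event, and the counting process of jumps is stochastically dominated by a Poisson process; standard arguments then yield $\tau_k\to\infty$ a.s.\ and hence non-explosion of the joint process. The main obstacle I foresee is the regularity subtlety in (a): the distance function $r(\cdot)$ is not smooth on the cut locus (this is already flagged as one of the key difficulties in the introduction), so the It\^o computation for $V$ has to be done on the smooth approximation and then passed to the limit. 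Once this technical point is handled, the interlacing closes cleanly and strong uniqueness of $(X_n,\Lambda_n)$ follows by induction on $k$.
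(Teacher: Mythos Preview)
Your interlacing construction is exactly the paper's route: the paper's proof is only a two-sentence sketch that first appeals to the Step~1 horizontal-lift construction of \eqref{eqn_SDEmanifold} (citing \cite[Proposition 2.4]{HKX2023}) and then ``sequentially splices solutions on the trajectory with switching times,'' which is precisely your inductive definition via $\tau_k$. Your treatment of (b) is also the standard one and matches the spirit of the paper.

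The one substantive difference is in how non-explosion of the fixed-$i$ SDE is obtained. The paper does not run a Lyapunov/Gronwall argument; instead it invokes \Cref{lem_Mass_conservation} (mass conservation under a $CD(\rho,\infty)$ curvature-dimension condition) to force $P_t\bONE=\bONE$ and hence $\zeta=\infty$. Your route via $V(x)=\log(1+r^2(x))$ and Assumption~\ref{ass_b_linear_growth} is more self-contained and uses only the listed hypotheses, and your choice of $V$ is exactly the good containment function $\Upsilon$ of \Cref{lem_fr} (built from the Greene--Wu smooth approximation of $r$), so the cut-locus issue you flag is already handled there. One caution, though: your claim that ``$\Delta_M V$ is at most linear'' is not automatic on a general complete manifold; controlling $\Delta_M r$ away from the cut locus typically needs a Ricci lower bound (Laplacian comparison), which is morally the same curvature input the paper imports through $CD(\rho,\infty)$. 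So the two non-explosion arguments are closer than they look, and you should either assume a Ricci lower bound explicitly or note that the $\frac{1}{2n}\Delta_M$ term can be absorbed once such a bound is in place.
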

\begin{proof}
	By constructing, we first have  the well-posedness of \eqref{eqn_SDEmanifold} motivating by the proof of the \cite[Proposition 2.4]{HKX2023}, then use the method of sequentially splicing solutions on the trajectory with switching times implying the well-posedness of the solution for \eqref{eqn_1goalSDE} and \eqref{eqn_1goalswitch}. Here, we omit the detailed proof. 
\end{proof}
In the following, we give the main result. 

\begin{theorem}[Large deviation principle]\label{thm_LDPmanifold} 
Let $(X_n(t),\Lambda_n(t))$ be the Markov processes on $M\times S$. Consider the setting of Assumptions \ref{assu_varn_manifold}, \ref{ass_b_linear_growth}, \ref{ass_b_one_side}, \ref{asm_conservative_manifold}, \ref{asm_conti_manifold} and \ref{ass_bfH_bounded}.
	Suppose that 
	the large deviation principle holds for $X_n(0)$ on $M$ with speed $n$ and a good rate function $I_0$.
\par
Then, the large
deviation principle is satisfied with speed $n$ for the processes $X_n(t)$ with a good rate function $I$ having action-integral representation,
\begin{equation*}
	I(\gamma)=
	\begin{cases}
		I_0(\gamma(0))+\int^\infty_0\mathcal{L}\left(\gamma(s),\dot{\gamma}(s)\right)\mathrm{d}s,&\mbox{if~$\gamma\in\mathcal{AC}(M)$,}\\
		\infty,&\mbox{otherwise.}
	\end{cases}
\end{equation*}
where $\mathcal{L}: TM \to  [0, \infty]$ is the Lagrangian transform of $\mathcal{H}$ given by $\mathcal{L}(x, v) = \sup_{p\in T^*_xM} \{\langle v, p\rangle - \mathcal{H}(x, p)\} $,
and \begin{equation*}
\begin{split}
\mathcal{H}(x,\mathrm{d}f(x))=\sup_{\pi\in\mathcal{P}(S)}\big\{\int_M B_{x,\mathrm{d}f(x)}(z)\pi(\text{d}z)-\mathcal{I}(x,\pi)\big\}
\end{split}
\end{equation*}
is given in \eqref{eqn_bfHH}.
\end{theorem}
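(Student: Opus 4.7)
The plan is to apply the Feng-Kurtz nonlinear semigroup framework (cf.\ \cite{FK2006}) to the process $(X_n,\Lambda_n)$. I would associate to its generator the logarithmically transformed operator $H_n f = \frac{1}{n} e^{-nf} A_n e^{nf}$ and identify its formal limit as the Hamiltonian $\mathcal{H}$ in \eqref{eqn_bfHH}: the Donsker-Varadhan functional $\mathcal{I}(x,\pi)$ arises from averaging out the fast chain with a frozen slow coordinate, while $B_{x,p}$ records the slow transport plus the martingale quadratic variation. The LDP then reduces to three ingredients in the Feng-Kurtz program: exponential tightness of $\{X_n\}$, the comparison principle for the stationary HJB equation $f - \lambda \mathbf{H}f = h$, and existence of viscosity solutions via an associated deterministic control problem that simultaneously yields the action-integral representation.

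For exponential tightness I would verify that $\Upsilon(x)=\frac{1}{2}\log(1+f^2(x))$, with $f$ a smoothed version of $r(x)=d(x,x_0)$, is a good containment function: the linear-growth bound of Assumption \ref{ass_b_linear_growth} together with the bounded switching rates of Assumption \ref{asm_conservative_manifold} gives $\sup_z \mathcal{H}(z,\mathrm{d}\Upsilon(z))<\infty$, and completeness of $M$ makes the sub-level sets compact. The logarithmic scaling is what allows the $\frac{1}{2}|\mathrm{d}\Upsilon|^2$ term to be absorbed by $-\mathcal{I}$; smoothing $r$ is necessary because the distance is non-differentiable on the cut-locus of $x_0$.

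The main obstacle will be the comparison principle, where the non-smoothness of $d(\cdot,\cdot)$ on the cut-locus breaks the standard variable-doubling argument with penalization $\tfrac{\alpha}{2}d^2(x,y)$. Assumption \ref{ass_b_one_side} is tailored exactly for this: it expresses the one-sided Lipschitz condition through the differentials $\mathrm{d}_x(\tfrac{1}{2}d^2(\cdot,y))$ and $\mathrm{d}_y(-\tfrac{1}{2}d^2(x,\cdot))$, both of which are well defined whenever $d(x,y)<i(K)$. The containment function localizes the doubling to compact sets $K$ where the injectivity radius is bounded below, and I would then run a bootstrapping argument in the spirit of \cite{KS2020} to replace the pointwise Hamiltonian by matching upper and lower variational Hamiltonians $\mathbf{H}_\dagger \leq \mathbf{H} \leq \mathbf{H}^\dagger$. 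The supremum over $\pi$ in \eqref{eqn_bfHH} is attained at the invariant measure of the $B_{x,\mathrm{d}f(x)}$-tilted jump generator, which restores enough regularity for the dissipative estimate to close inside the injectivity radius.

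Existence of viscosity solutions I would obtain by representing the resolvent as the value function of a control problem with Lagrangian $\mathcal{L}$. Assumption \ref{ass_bfH_bounded} provides the lower bound $\mathbf{H}f\geq -c_f$, keeping the value function finite on $M$ and yielding near-optimal controlled curves with cost bounded on any finite horizon. The delicate point is gluing local optimizers (constructed in exponential-map charts of geodesic balls) into a globally defined curve $\gamma\in\mathcal{AC}(M)$; here I would iterate the local construction using completeness and the compactness of sub-level sets of $\Upsilon$, then extract a limiting curve via an Arzel\`a-Ascoli type argument. With the three ingredients in place, the abstract theorem of \cite{FK2006}, adapted to the Riemannian setting via exponential-map charts, delivers the LDP with rate function $I(\gamma)=I_0(\gamma(0))+\int_0^\infty \mathcal{L}(\gamma(s),\dot{\gamma}(s))\,\mathrm{d}s$, completing the proof.
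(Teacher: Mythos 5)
Your proposal follows essentially the same Feng--Kurtz route the paper takes: the same containment function $\Upsilon=\tfrac12\log(1+f^2)$, the same localization to compact sets (where the injectivity radius is bounded below) so that the doubling argument for the comparison principle can use Assumption \ref{ass_b_one_side}, the same intermediate operators $H_\dagger,H_\ddagger$ in the spirit of \cite{KS2020}, and the same chart-by-chart construction and patching of locally optimal controlled curves to obtain global existence of viscosity solutions. One small imprecision worth flagging: in the containment estimate the Donsker--Varadhan term $\mathcal{I}(x,\pi)\ge 0$ is simply dropped rather than used to absorb $\tfrac12|\mathrm{d}\Upsilon|^2$; that quadratic term is controlled directly because the logarithmic scaling makes $|\mathrm{d}\Upsilon|$ uniformly bounded, and likewise the global patching in the paper closes by a contradiction argument using a uniform lower bound on local existence times rather than by an Arzel\`a--Ascoli extraction.
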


\section{The strategy of proof \Cref{thm_LDPmanifold}}\label{sec:The_strategy}
In this section, we begin with the necessary preparation for the coming strategy.
The semigroups of log-Laplace transforms of the conditional probabilities
\begin{equation}\label{eqn_Vsemigroup}
V_n(t)f(x_0,k_0)=\frac{1}{n}\text{\rm{log}}\mathbb{E}[\mathrm{e}^{nf(X_n (t),\Lambda_n(t))}~|~(X_n(0),\Lambda_n(0))=(x_0,k_0)]
\end{equation}
have generators 
\begin{equation}\label{eqn_HVN}
    H_nf=\frac{\mathrm{d}}{\mathrm{d}t}V_n(t)f \Big|_{t=0} =\frac{1}{n}\mathrm{e}^{-nf}A^M_n\mathrm{e}^{nf}.
\end{equation}
Recall that $\mathcal{L}: TM \to  [0, \infty]$, defined as 
\begin{equation*}
	\mathcal{L}(x, v) = \sup_{p\in T^*_xM}\{\langle p, v\rangle-\mathcal{H}(x,~p)\}.   
\end{equation*}
This Lagrangian keeps track of the cost along a trajectory that will play a central role in the form of the rate function of the large deviation principle. 
Then in terms of $\mathcal{L}$, we define a \textit{variational semigroup} $\mathbf{V}(t)$, $t \geq 0$,
\begin{equation*}
	\mathbf{V}(t)f(x):=\sup_{\gamma \in\mathcal{AC}\atop \gamma(0)=x}\left\{f(\gamma(t))-\int^t_0\mathcal{L}(\gamma(s),\dot{\gamma}(s))\mathrm{d}s\right\}
\end{equation*}
and \textit{resolvent} $\mathbf{R}(\lambda)$, $\lambda> 0$,  
\begin{equation}\label{eqn_bfR}
\begin{split}
	\mathbf{R}(\lambda)h(x):=\sup_{\gamma\in \mathcal{AC}\atop \gamma(0)=x}\left\{\int^\infty_0 \lambda^{-1}e^{-\lambda^{-1}t}\left(h(\gamma(t))-\int^t_0\mathcal{L}(\gamma(r),\dot{\gamma} (r))\mathrm{d}r\right)\mathrm{d}s\right\}.
 \end{split}
\end{equation}

\begin{definition}[Operator convergence]
For any $(f,g)\in H$, there exists a sequence $(f_n,g_n)\in H_n$ such that $\|f_n-f\|\to 0$ and $\|g_n-g\|\to 0$ as $n\to \infty$.
\end{definition}

As typical in proofs of path space of LDP for Markov processes, we argue via the projection limit theorem. We repeat this classical idea for complement:
\begin{enumerate}[(1)]
    \item 
    If the processes are exponentially tight, it suffices to establish the large deviation principle for finite dimensional distributions. 
    \item The large deviation principle for finite dimensional distributions can be established by proving that the semigroup of log Laplace-transforms of the conditional probabilities converges to a limiting semigroup.
    \item One can often rewrite the limiting semigroup as a variational semigroup, which allows to rewrite the rate-function on the Skorohod space in Lagrangian form.
\end{enumerate}
In detail, the strategy to prove path space large deviation principle for a sequence of the Markov processes $(X_n(t),\Lambda_n(t))$ on $M$, formally works as follows:
\begin{enumerate}[(i)]
    \item \textit{Identification of a single value Hamiltonian}. The semigroups of log-Laplace transforms of the conditional probabilities $V_n(t)$ in \eqref{eqn_Vsemigroup} have generators $H_n$. Then one verifies that the nonlinear generator sequence $H_n$ converges to the limiting operator $H$ as $n\to \infty$.
    \item \textit{Exponential tightness on Riemannian manifold}. Provided one can verify the exponential compact containment condition, the convergence of the sequence $H_n$ gives exponential tightness.
    \item \textit{Comparison principle on Riemannian manifold}. The theory of viscosity solutions gives applicable conditions for proving that the limiting Hamiltonian generates a semigroup. If for all $\lambda> 0$ and $h\in C_b(M)$, the Hamilton-Jacobi-Bellman equation $f-\lambda Hf = h$ admits a unique solution, one can extend the generator $H$ so that the extension satisﬁes the conditions of Crandall-Liggett theorem \cite{CL1971} and thus generates a semigroup $V(t)$. Additionally, it follows that the semigroups $V_n(t)$ converge to $V(t)$, giving the large deviation principle. Uniqueness of the solution of the Hamilton-Jacobi equation can be established via the comparison principle for sub- and super-solutions.
    \item \textit{Variational representation of the Hamiltonian on Riemannian manifold}. By Lagrangian transforming the limiting Hamiltonian $H$, one can define a “Lagrangian” which can be used to define a variational semigroup and a variational resolvent. It can be shown that the variational resolvent provides a solution of the Hamilton-Jacobi-Bellman equation and therefore, by uniqueness of the solution, identiﬁes the resolvent of $H$. As a consequence, an approximation procedure yields that the variational semigroup and the limiting semigroup $V(t)$ agree. A standard argument is then sufficient to give a Lagrangian form of the path-space rate function. 
\end{enumerate}

In addition, Feng and Kurtz summarized this in their book \cite{FK2006}. We modified it to fit our content.
	\begin{proposition}[Adaptation of Theorem 5.15, Theorem 8.27 and Corollary 8.28 in \cite{FK2006} to our context]\label{proposition_bridge}
		Let $(X_n(t),\Lambda_n(t))$ be Markov processes on $M\times S$.
		Suppose that
		\begin{enumerate}
			\item  \label{item_a} $X_n(0)$  satisfies large deviation principle;
			\item \label{item_b} there exists an operator $H$ such that we have $\|H_n-H\|\to 0$ as $n\to\infty$;
			\item \label{item_c} we have exponential tightness of the process $(X_n(t),\Lambda_n(t))$;
			\item  \label{item_d} for all $\lambda>0$ and $h\in C_b(M)$, the comparison principle holds for $f-\lambda Hf=h$ with same $H$ as in \cref{item_b}, and there exists a unique solution $R(\lambda)h$;
  \item \label{item_e} $R(\lambda)h=\mathbf{R}(\lambda)h$.
		\end{enumerate}
	 
		Then the following hold:
		\begin{enumerate}[(i)]
			\item \label{item_Limit of nonlinear semigroup} (Limit of nonlinear semigroup) For each $x\in M$, $f\in \overline{\mathcal{D}(H)}$, $f_n\in \mathcal{B}(M\times S)$ and $t\geq 0$, if $\|f_n-f\|\to 0$ as $n\to \infty$, there exists  a unique semigroup $V(t)$ such that  
\begin{equation}\label{nonlinear}
		\lim_{n\to\infty}\|V_n(t)f_n-V(t)f\|=0
			\end{equation}
		and 
  \begin{equation}\label{eqn_RV}
  \lim_{m\to\infty}\|R(t/m)^m f-V(t)f\|=0.
  \end{equation}
			\item \label{item_Large deviation principle}(Large deviation principle) $X_n(t)$ satisfies the large deviation principle with good rate function $I$ given by
			\begin{equation}\label{eqn_rate_function_1}
			I(x)=I_0(x(t_0))+\sup_{k\in \mathbb{N}}\sup_{0=t_0<t_1<\cdots<t_k<\infty}\sum_{i=0}^{k}I^V_{t_{i+1}-t_{i}}(x(t_{i+1})|x(t_{i})),
			\end{equation}
			where for $\Delta t=t_{i+1}-t_{i}>0$ and $x(t_{i+1}),x(t_{i})\in M$, the conditional rate functions $I^V_{\Delta t}(x(t_{i+1})~ |~ x(t_{i}))$ are 
			\begin{equation*}
				I^V_{\Delta t}(x(t_{i+1})~|~x(t_{i}))=\sup_{f\in C_b(M)}[f(x(t_{i+1}))-V(\Delta t)f(x(t_{i}))].
			\end{equation*} 
			\item \label{item_Action-integral representation of the rate function} (Action-integral representation of the rate function) 
  The rate function $I$ of \eqref{eqn_rate_function_1} together with $V(t)=\mathbf{V}(t)$ is also  given in Action-integral representation,
\begin{equation}\label{eqn_AC}
I(\gamma)=
\begin{cases}
I_0(\gamma(0))+\int^\infty_0\mathcal{L}(\gamma(s),\dot{\gamma}(s) )\text{d}s,&\mbox{if~$\gamma\in \mathcal{AC}(M)$,}\\
					\infty,&\mbox{otherwise.}
				\end{cases}
			\end{equation}
		\end{enumerate}
	\end{proposition}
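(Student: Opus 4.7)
The plan is to split the proposition into its three conclusions and establish each using the corresponding cluster of hypotheses, essentially transporting Theorems 5.15, 8.27 and Corollary 8.28 of \cite{FK2006} to the manifold setting. I would begin with the construction and convergence of the limit semigroup, conclusion (i). From assumption (d), the solution operator $R(\lambda): C_b(M) \to C_b(M)$ is a well-defined contraction satisfying a resolvent identity on its range, so that the Crandall-Liggett theorem \cite{CL1971} constructs a semigroup $V(t)$ on $\overline{\mathcal{D}(H)}$ through the exponential formula $V(t)f = \lim_m R(t/m)^m f$ recorded in \eqref{eqn_RV}. The convergence $\|V_n(t) f_n - V(t) f\| \to 0$ in \eqref{nonlinear} is then obtained by combining the operator convergence from (b) with the exponential tightness from (c): the former yields prelimit resolvent convergence $R_n(\lambda) f_n \to R(\lambda) f$ on $\mathcal{D}(H)$ through the viscosity-solution identification, and the latter supplies the compactness required to pass to the limit in the iterated Chernoff-type product $V_n(t)f_n = \lim_m R_n(t/m)^m f_n$.

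For the path-space LDP in conclusion (ii), the program is the classical projective limit argument. Using the semigroup convergence of the previous step together with the exponential tightness (c), an application of Bryc's inverse Varadhan lemma produces, for each $\Delta t > 0$ and $x \in M$, a conditional large deviation principle for the transition $X_n(\Delta t)\mid X_n(0) = x$ with rate function $I^V_{\Delta t}(\cdot \mid x)$ given by Legendre duality against $V(\Delta t)$. Composing such transitions along a finite time grid and combining with the LDP (a) for $X_n(0)$ via the Markov property yields finite-dimensional LDPs with rate function of the form \eqref{eqn_rate_function_1}. Exponential tightness then lifts the family of finite-dimensional LDPs to the full path space via the Dawson-Gärtner projective limit theorem.

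The action-integral representation in conclusion (iii) is where assumption (e) enters. By (e) the analytic resolvent $R(\lambda)$ agrees with the variational resolvent $\mathbf{R}(\lambda)$ of \eqref{eqn_bfR}. Inserting this into the Crandall-Liggett formula gives $V(t) f = \lim_m \mathbf{R}(t/m)^m f$, and a time-discretization argument based on piecewise-geodesic approximations of curves $\gamma \in \mathcal{AC}(M)$ identifies this limit with the variational semigroup $\mathbf{V}(t) f$. Once $V(t) = \mathbf{V}(t)$ is in hand, a direct duality computation rewrites each conditional rate function $I^V_{t_{i+1}-t_i}(\gamma(t_{i+1}) \mid \gamma(t_i))$ as the infimum of $\int_{t_i}^{t_{i+1}} \mathcal{L}(\gamma, \dot\gamma) \, \mathrm{d}s$ over absolutely continuous connecting paths, and the telescoping supremum in \eqref{eqn_rate_function_1} collapses to the action integral in \eqref{eqn_AC}.

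I expect the hard part to be the identification $V(t) = \mathbf{V}(t)$ in the last step. On Euclidean space this is a short time-discretization argument, but on $M$ one must approximate absolutely continuous curves by concatenations of short geodesics, control the Lagrangian integral uniformly along these approximations, and handle the possible non-smoothness of the distance function near the cut locus pointed out in the introduction. Here the containment function $\Upsilon$ and the local one-sided Lipschitz bound of \Cref{ass_b_one_side}, together with the injectivity-radius machinery already invoked elsewhere in the paper, should provide the uniform estimates needed to push the Euclidean-style argument through on the manifold.
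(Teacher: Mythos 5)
The paper does not actually reprove this proposition: it is stated as an adaptation of Theorem 5.15, Theorem 8.27, and Corollary 8.28 of Feng and Kurtz \cite{FK2006}, with no proof beyond the citation, and the genuine manifold-specific work is deferred to the subsequent propositions that verify the hypotheses (a)--(e). Your reconstruction of the Feng--Kurtz argument is correct in outline: Crandall--Liggett builds $V(t)$ from $R(\lambda)$, operator convergence plus exponential tightness gives $V_n \to V$, Bryc plus the projective-limit theorem give the path-space LDP with rate function \eqref{eqn_rate_function_1}, and assumption (e) is what lets you pass to the action-integral form.

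The one place where your emphasis is misplaced is the identification $V(t)=\mathbf{V}(t)$. You describe it as the hard part, requiring piecewise-geodesic approximation of absolutely continuous curves and cut-locus control. In fact, once assumption (e) gives $R(\lambda)=\mathbf{R}(\lambda)$, the equality $V(t)=\mathbf{V}(t)$ is nearly immediate: $V(t)f = \lim_m R(t/m)^m f$ by Crandall--Liggett, and $\lim_m \mathbf{R}(t/m)^m f = \mathbf{V}(t) f$ is precisely the content recorded in the Remark following Proposition~\ref{pro_three_conditions}, which the paper obtains from the compactness properties \ref{item_compact}--\ref{item:control_Psi_on_L_sets} of the Lagrangian. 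No geodesic discretization argument is needed at that stage. The manifold-specific difficulty you anticipate is real, but it lives one level deeper, in \emph{establishing} hypothesis (e) itself -- i.e., Theorem~\ref{thm_viscosity_solution}, that $\mathbf{R}(\lambda)h$ is a viscosity solution -- and this is where the paper deploys the chart-transfer Lemma~\ref{lem_lemma1}, the local-existence Lemma~\ref{lemma:local_existence}, the injectivity-radius and cut-locus machinery, and the containment function $\Upsilon$. Inside the present proposition, (e) is simply assumed, so those tools do not enter.
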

As a necessary supplement to \Cref{proposition_bridge} \ref{item_e}, we summarize it as the following theorem.
\begin{theorem}[existence of viscosity solutions]\label{thm_viscosity_solution}
	Let $\lambda> 0$ and $h \in C_b(M)$, the resolvent $\mathbf{R}(\lambda)h$ in \Cref{proposition_bridge} \ref{item_e} is a viscosity solution to the Hamilton-Jacobi-Bellman equations $f-\lambda \mathbf{H}f=h$, where $\mathbf{H}$ is defined in \eqref{eqn_bfHH}.
\end{theorem}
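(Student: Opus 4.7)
I would view $\mathbf{R}(\lambda)h$ as the value function of an infinite-horizon discounted deterministic control problem on $M$ and deduce, via dynamic programming, that it solves $f - \lambda\mathbf{H}f = h$ in the viscosity sense.

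First I would rewrite \eqref{eqn_bfR}. Applying Fubini to the double integral yields the equivalent discounted-cost form
$$\mathbf{R}(\lambda)h(x) = \sup_{\gamma\in\mathcal{AC}(M),\,\gamma(0)=x} \int_0^\infty e^{-t/\lambda}\bigl[\lambda^{-1}h(\gamma(t)) - \mathcal{L}(\gamma(t),\dot\gamma(t))\bigr]\,\mathrm{d}t.$$
A standard concatenation argument for absolutely continuous curves then produces the dynamic programming principle
$$\mathbf{R}(\lambda)h(x) = \sup_{\gamma(0)=x}\!\left\{\int_0^T e^{-t/\lambda}\bigl[\lambda^{-1}h(\gamma(t)) - \mathcal{L}(\gamma(t),\dot\gamma(t))\bigr]\,\mathrm{d}t + e^{-T/\lambda}\mathbf{R}(\lambda)h(\gamma(T))\right\}$$
for every $T>0$.

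For the supersolution inequality, suppose $\mathbf{R}(\lambda)h - \varphi$ attains a local minimum at $x_0$ with $\varphi\in\mathcal{D}(\mathbf{H})$, and normalize so that equality holds at $x_0$. Fix $v\in T_{x_0}M$ and use the geodesic $\gamma_v(t):=\exp_{x_0}(tv)$ as a competitor in the DPP. Using $\mathbf{R}(\lambda)h(\gamma_v(T)) \geq \varphi(\gamma_v(T))$, dividing by $T$, and letting $T\downarrow 0$ gives
$$\lambda^{-1}\varphi(x_0) - \langle\mathrm{d}\varphi(x_0),v\rangle \geq \lambda^{-1}h(x_0) - \mathcal{L}(x_0,v).$$
Taking the supremum over $v\in T_{x_0}M$ and invoking the Legendre duality $\mathcal{H}(x_0,\mathrm{d}\varphi(x_0))=\sup_v\{\langle\mathrm{d}\varphi(x_0),v\rangle-\mathcal{L}(x_0,v)\}$ yields $\varphi(x_0) - \lambda\mathbf{H}\varphi(x_0) \geq h(x_0)$. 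For the subsolution inequality at a local maximum $x_0$ of $\mathbf{R}(\lambda)h-\varphi$, the argument is structurally symmetric but requires extracting $\eta$-near-optimal curves $\gamma_{\eta,T}$ from the DPP instead of prescribing them. Taking $T,\eta\downarrow 0$ with $\eta/T\to 0$, the averaged velocity $T^{-1}\int_0^T \dot\gamma_{\eta,T}(t)\,\mathrm{d}t$ serves as a maximizer in the Legendre formula, and one closes the inequality $\varphi(x_0) - \lambda\mathbf{H}\varphi(x_0) \leq h(x_0)$.

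The main obstacle is precisely the subsolution step, where one must control the family of near-optimal trajectories globally on $M$; this is the difficulty flagged as item (iii) in the introduction. I would handle it in two stages. First, the relaxed Lyapunov function $\Upsilon(x)=\tfrac{1}{2}\log(1+f^2(x))$ from the introduction provides compact containment that prevents near-optimal curves from escaping to infinity and supplies the a priori bound on $\int_0^T e^{-t/\lambda}\mathcal{L}(\gamma_{\eta,T},\dot\gamma_{\eta,T})\,\mathrm{d}t$ needed to apply weak compactness; Assumption \ref{ass_bfH_bounded} gives the boundedness constant $c_f$ underlying the existence of optimally controlled curves. Second, the non-smooth character of $\mathcal{H}$, arising from the inner supremum over $\pi\in\mathcal{P}(S)$ and the Donsker--Varadhan functional $\mathcal{I}$, is absorbed by the lower semicontinuity of $\mathcal{L}$ as the Legendre conjugate of the convex Hamiltonian, so the limiting Lagrangian bound survives passage to a subsequence. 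Combining compact containment, velocity control, and lower semicontinuity, the limit $T,\eta\downarrow 0$ can be taken cleanly, the subsolution inequality is closed, and $\mathbf{R}(\lambda)h$ is identified as a viscosity solution of $f - \lambda\mathbf{H}f = h$.
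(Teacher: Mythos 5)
Your proposal takes a genuinely different route from the paper. The paper reduces the theorem to verifying three abstract properties of $\mathbf{R}(\lambda)$ (the resolvent identity, pseudo-resolvent property, and contractivity), imported from the Feng--Kurtz framework, and then shows that these follow from Propositions~\ref{pro_three_conditions} and~\ref{pro_pro_three_conditions_super}: the first establishes compactness of Lagrangian sublevel sets, containment, and a superlinearity/velocity bound; the second proves global existence of optimally controlled curves, which is done by transferring the differential inclusion $\dot y\in\partial_p\mathcal{H}_\varphi(y,\mathrm{d}f_\varphi(y))$ to normal-coordinate charts via Lemma~\ref{lem_lemma1} and then patching local solutions (Lemmas~\ref{lemma:local_existence}--\ref{lemma:control_on_local_curves}). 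Your proposal instead verifies the sub/supersolution properties directly from a dynamic programming principle. This is a legitimate alternative that in principle needs only $\eta$-near-optimal curves (which exist trivially from the supremum) rather than exact optimizers; in exchange, the paper's route produces a single set of conditions that also feeds the Crandall--Liggett approximation and the identification $R(\lambda)h=\mathbf{R}(\lambda)h$ in Proposition~\ref{proposition_bridge}\ref{item_e}.

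There is, however, a concrete gap in the subsolution step. You propose to use the ``averaged velocity'' $T^{-1}\int_0^T\dot\gamma_{\eta,T}(t)\,\mathrm{d}t$ as a maximizer in the Legendre formula, but this quantity is not defined on a manifold: $\dot\gamma_{\eta,T}(t)\in T_{\gamma_{\eta,T}(t)}M$ lives in a different fiber for each $t$, so one cannot add or average these vectors without first trivializing the tangent bundle locally, e.g.\ by parallel transport or by working in a normal-coordinate chart around $x_0$ --- and the chart-transfer machinery is exactly the content of Lemma~\ref{lem_lemma1} and the obstacle flagged as item (iii) of the introduction. The issue can be side-stepped entirely: after differentiating $t\mapsto e^{-t/\lambda}\varphi(\gamma(t))$ under the DPP inequality, apply the \emph{pointwise} Young inequality $\langle\mathrm{d}\varphi(\gamma(t)),\dot\gamma(t)\rangle\le\mathcal{L}(\gamma(t),\dot\gamma(t))+\mathcal{H}(\gamma(t),\mathrm{d}\varphi(\gamma(t)))$ under the integral sign; this is geometry-free and closes the estimate without any averaging. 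Two further points would need tightening before the plan is rigorous: first, you must ensure that the $\eta$-near-optimal curve remains inside the neighborhood of $x_0$ where the local-max inequality $\mathbf{R}(\lambda)h\le\varphi$ is valid, which is precisely the localization supplied by Proposition~\ref{pro_three_conditions}\ref{item:control_Psi_on_L_sets} and Lemma~\ref{lemma:control_on_local_curves}; second, the paper's Definition~\ref{def_VS} is the sequence-based (Feng--Kurtz) notion of sub/supersolution, whereas your argument tacitly assumes the extrema of $\mathbf{R}(\lambda)h-\varphi$ are attained, which requires the test functions to be modified so as to have compact sublevel sets (the role of $\Upsilon$ in $H_\dagger,H_\ddagger$); relatedly, you have not argued lower semicontinuity of $\mathbf{R}(\lambda)h$, which the supersolution definition requires and which is the reason the paper invokes existence of exact optimizers in its supersolution discussion.
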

We are not urgent to give the proof of \Cref{thm_viscosity_solution}, will prove it during the proof of \Cref{thm_LDPmanifold} in \Cref{sec:variational resolvent is a viscosity solution}. This is one of our main contributions.
\par
Last but not least, according to the strategy, we will have various propositions below to prove \Cref{thm_LDPmanifold}. To visually illustrate the relationship between various theorems and propositions, we present it through a \Cref{Fig:The_strategy_of proof_Thm}.
\begin{figure}[h]
    \centering
  \centering
	\begin{tikzpicture}[scale=0.7]
 \draw[thick, rounded corners=4pt] (0,0) rectangle (6.5,1);
 \node at (3.25,0.4) {\scalebox{0.7}{Action-integral representation}};
\draw[<-, line width=0.9pt] (3.25,1) -- (3.25,2);

 \draw[thick, rounded corners=4pt] (0,2) rectangle (6.5,3);
 \draw[<-, line width=0.9pt] (3.25,3) -- (3.25,4);
\node at (3.25,2.4) {\scalebox{0.7}{Comparison principle, $\text{Pro.}~\ref{pro_compa_prin}$}};

\draw[thick, rounded corners=4pt] (0,4) rectangle (6.5,5);
\draw[<-, line width=0.9pt] (3.25,5) -- (3.25,6);
\node at (3.25,4.4) {\scalebox{0.7}{Exponential tightness, $\text{Pro.}~\ref{pro_expo}$}};

\draw[thick, rounded corners=4pt] (0,6) rectangle (6.5,7);
\node at (3.25,6.4) {\scalebox{0.7}{Limit Hamiltonian, $\text{Pro.}~\ref{pro_convergence_of_operator}$}};

 \draw[thick, rounded corners=4pt] (0,6) rectangle (6.5,7);
\node at (-3,4.1) {\scalebox{0.7}{Large}};
\node at (-3,3.7) {\scalebox{0.7}{deviation}};
\node at (-3,3.3) {\scalebox{0.7}{principle}};
\node at (-3,2.9) {\scalebox{0.7}{Thm. $\ref{thm_LDPmanifold}$}};
 \draw[thick, rounded corners=4pt] (-4,6) rectangle (-2,1);
  \draw[->, line width=0.9pt] (-1,0.5) -- (0,0.5);
\draw[-, line width=0.9pt] (-1,0.5) -- (-1,6.5);
\draw[-, line width=0.9pt] (-1,6.5) -- (0,6.5);
  \draw[<-, line width=0.9pt] (-2,3.5) -- (-1,3.5);

\draw[thick, rounded corners=4pt] (7.5,2) rectangle (13,1);
\draw[->, line width=0.9pt] (7.5,1.5) -- (3.25,1.5);
\node at (10.1,5.4) {\scalebox{0.7}{Eigenvalued problem, $\text{Lem.}~\ref{lem_eigen}$}};
  \draw[thick, rounded corners=4pt] (7.5,6) rectangle (13,5);
\draw[->, line width=0.9pt] (7.5,5.5) -- (3.25,5.5);
\node at (10.1,1.4) {\scalebox{0.7}{Existence of viscosity, $\text{Thm.}~\ref{thm_viscosity_solution}$}};
 \end{tikzpicture}
 \caption{The strategy of proof \Cref{thm_LDPmanifold}}	\label{Fig:The_strategy_of proof_Thm}
\end{figure}

\section{The proof of \Cref{thm_LDPmanifold}}\label{sec:the_proof_of_main_theorem}
\subsection{Identification of a single valued Hamiltonian}
In this section, due to existence of switching process $\Lambda_n$ in \eqref{eqn_1goalSDE}, it is not obvious to identify a single valued Hamiltonian on Riemannian manifold. We start to get a multivalued convergent Hamiltonian. 
\subsubsection{Multivalued convergent Hamiltonian}
For any $(x,i)\in O(M) \times S$, let $f(\cdot,\cdot)\in  C^2_c(M\times S;\mathbb{R}^+)$ be the family of all nonnegative functions which are twice differentiable in the spatial variable with support. We consider the solution  $(X_n(t),\Lambda_n(t))$ of the system \eqref{eqn_1goalSDE} and \eqref{eqn_1goalswitch} with the generator $A_n^M$:
\begin{align}\label{eqn_ANM}
   A_n^Mf(x,i)
   =\frac{1}{2n}\Delta_Mf(x,i)+b(x,i)\mathrm{d}f(x,i)+n\sum_{i\in S}q_{ij}(x)(f(x,j)-f(x,i)).
\end{align}

Now we turn to give a multivalued convergent Hamiltonian by the generator $A^M_n$ and the relation $H_n=1/ne^{-nf}A_ne^{nf}$.
\begin{proposition}
    [Multivalued convergent Hamiltonian]\label{pro_convergence_of_operator}
	 Let $(X_n(t),\Lambda_n(t))$ be a Markov process on $M\times S$ with generator $A^M_n$ in \eqref{eqn_ANM}. For $H_n$ in \eqref{eqn_HVN}, then there exists a multivalued limit operator $H$ such that $\|H_n-H\|\to0$ as $n\to \infty$ and $H$ is given by
		\begin{equation}\label{eqn_multivale_H}
		H:=\Big\{(f,H_{f,\phi})\Big|f\in C_b(M), H_{f,\phi}\in C_b(M\times S)~\mbox{and}~\phi\in C^2(M\times S)\Big\},
	\end{equation}
 where 
 \begin{equation}\label{eqn_HFPHi}
     H_{f,\phi}(x,i)=b(x,i)\mathrm{d} f(x)+\frac{1}{2}|\mathrm{d}f(x)|^2+\sum_{j\in S}q_{ij}(x)[e^{\phi(x,j)-\phi(x,i)}-1].
 \end{equation}
\end{proposition}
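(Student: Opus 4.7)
The strategy is the standard singular-perturbation device from Feng--Kurtz: probe the nonlinear generator $H_n$ on a two-scale test family
$$f_n(x,i) := f(x) + \tfrac{1}{n}\phi(x,i),$$
with slow profile $f\in C_b(M)$ (depending only on $x$) and fast-variable corrector $\phi\in C^2(M\times S)$. Since $\phi$ is bounded on the relevant domain, $\|f_n-f\|_\infty = O(1/n)\to 0$ automatically, so all the real work lies in analysing the limit of $H_n f_n = \tfrac{1}{n}e^{-nf_n} A_n^M e^{nf_n}$.

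The plan is to split this expression into the three contributions of $A_n^M$ in \eqref{eqn_ANM}. Setting $g(x,i):=nf(x)+\phi(x,i)$, the identity $\Delta_M e^g = e^g(\Delta_M g + |\mathrm{d}g|^2)$ applied to the Laplace-Beltrami piece, followed by multiplication by $\tfrac{1}{n}e^{-g}$, produces the leading term $\tfrac{1}{2}|\mathrm{d}f(x)|^2$ together with residuals
$$\tfrac{1}{2n}\Delta_M f + \tfrac{1}{n}\langle \mathrm{d}f,\mathrm{d}_x\phi\rangle + \tfrac{1}{2n^2}\Delta_M\phi + \tfrac{1}{2n^2}|\mathrm{d}_x\phi|^2.$$
The drift contribution is immediate from the first-order chain rule: $b(x,i)\mathrm{d}f(x) + \tfrac{1}{n}b(x,i)\mathrm{d}_x\phi(x,i)$. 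The decisive cancellation occurs in the jump term: because $f$ does not depend on $i$, the differences $nf_n(x,j)-nf_n(x,i) = \phi(x,j)-\phi(x,i)$ are $n$-independent, so the prefactor $n$ in front of the rate matrix is exactly neutralised by the $1/n$ outside, leaving $\sum_{j\in S}q_{ij}(x)[e^{\phi(x,j)-\phi(x,i)}-1]$ with no residual. Summing the three pieces reproduces $H_{f,\phi}(x,i)$ as in \eqref{eqn_HFPHi}.

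The main obstacle is \emph{uniform} convergence of the residual terms on the non-compact manifold $M$. Because $b$ is only of linear growth (\Cref{ass_b_linear_growth}) and $M$ is unbounded, errors of the form $\tfrac{1}{n}b(x,i)\mathrm{d}_x\phi$ and $\tfrac{1}{n}\langle\mathrm{d}f,\mathrm{d}_x\phi\rangle$ do not vanish globally without growth constraints on the derivatives of $f$ and $\phi$. The natural fix is to restrict the class of admissible pairs $(f,\phi)$ so that $\mathrm{d}f, \mathrm{d}_x\phi, \Delta_M f, \Delta_M\phi$ are uniformly bounded (and $\mathrm{d}_x\phi$ decays fast enough to absorb the linear growth of $b$), which guarantees both $\|H_n f_n - H_{f,\phi}\|\to 0$ and membership $H_{f,\phi}\in C_b(M\times S)$; boundedness of $\sum_j q_{ij}(e^{\phi(x,j)-\phi(x,i)}-1)$ follows from \Cref{asm_conservative_manifold}. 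This verifies the extended-limit convergence $\|H_n - H\|\to 0$ required in the statement.
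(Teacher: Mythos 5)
Your proposal takes essentially the same route as the paper's proof: both probe $H_n$ with the two-scale family $f_n(x,i) = f(x) + \tfrac{1}{n}\phi(x,i)$, observe that because $f$ is $i$-independent the switching term's exponent $n[f_n(x,j)-f_n(x,i)] = \phi(x,j)-\phi(x,i)$ is $n$-free (so the outer $\tfrac{1}{n}$ exactly cancels the rate prefactor $n$), and let $n\to\infty$ term by term; your explicit use of $\Delta_M e^g = e^g(\Delta_M g + |\mathrm{d}g|^2)$ is the same computation the paper performs. The one point where you go beyond the paper is in flagging that the residuals $\tfrac{1}{2n}\Delta_M f$, $\tfrac{1}{n}\langle\mathrm{d}f,\mathrm{d}_x\phi\rangle$, $\tfrac{1}{n}b\,\mathrm{d}_x\phi$, etc.\ need not vanish \emph{uniformly} on the non-compact $M$ when $b$ is only of linear growth; the paper's proof dispenses with this by asserting the limit ``under uniform topology'' without further justification, whereas the operator-convergence definition adopted in the paper is a sup-norm condition. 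Your proposed repair — restricting the admissible $(f,\phi)$ so that $\mathrm{d}f$, $\Delta_M f$, $\mathrm{d}_x\phi$, $\Delta_M\phi$ are uniformly bounded and $\mathrm{d}_x\phi$ decays against $b$ — is a correct and appropriate way to close this gap, and is implicitly consistent with the test functions that are actually used later (which have uniformly bounded differentials, being built from compactly supported penalty functions and the containment function $\Upsilon$ whose gradient is bounded by construction).
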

\begin{proof}
By the generator $A^M_n$ in \eqref{eqn_ANM}, for $\mathrm{e}^{nf}\in \mathcal{D}(A^M_n)$ we get a nonlinear generator
\begin{equation}\label{eqn_HnF}
\begin{split}
   H_nf(x,i)&=\frac{1}{n}e^{-nf}A_n^Me^{nf}(x,i)\\
    &= b(x,i)\mathrm{d} f(x,i)+\frac{1}{2}|\mathrm{d}  f(x,i)|^2+\frac{1}{2n}\Delta_Mf(x,i)\\
    &\quad+\sum_{j\in S}q_{ij}(x)[e^{n(f(x,j)-f(x,i))}-1].
\end{split}
\end{equation}
However, when $n\to \infty$, \eqref{eqn_HnF} is not convergent due to the divergence of the third term. To proceed, instead of using $f$ in \eqref{eqn_HnF}, we take a sequence 
\begin{equation*}
    f_n(x,i)=f(x)+\frac{1}{n}\phi(x,i),~~~\forall~f\in C^\infty(M)~\mbox{and}~\phi\in C^2(M\times S).
\end{equation*}
Obviously, $\mathrm{d}f_n(x,i)=\mathrm{d}f(x)+\frac{1}{n}\mathrm{d}\phi(x,i)$. This, together with \eqref{eqn_HnF}, we have
\begin{align*}
    H_nf_n(x,i)
    &=b(x,i) \left(\mathrm{d}f(x)+\frac{1}{n}\mathrm{d}\phi(x,i)\right)+\frac{1}{2} \left|\mathrm{d}f(x)+\frac{1}{n}\mathrm{d}\phi(x,i)\right|^2\\
    &\quad+\frac{1}{2n}\Delta_M\left(f(x)+\frac{1}{n}\phi(x,i)\right)+\sum_{j\in S}q_{ij}(x)[e^{\phi(x,j)-\phi(x,i)}-1].
\end{align*}
Let $n\to \infty$ in above equality, under uniform topology we obtain the limit
\begin{equation*}
\begin{split}
    &H_{f,\phi}(x,i)\\
    &=b(x,i) \mathrm{d} f(x)+\frac{1}{2}
    \left|\mathrm{d}  f(x)\right|^2+\sum_{j\in S}q_{ij}(x)[e^{\phi(x,j)-\phi(x,i)}-1]\\
&=:H_l(x,i)+H_q(x)+\mathrm{e}^{-\phi(x,i)}R_x\mathrm{e}^{\phi(x,i)}, 
\end{split}
\end{equation*}
where the linear part is
\begin{equation*}
    H_l(x,i)=b(x,i)\text{d} f(x),
\end{equation*}
the nonlinear part is
\begin{equation}\label{eqn_Hq}
   H_q(x)=\frac{1}{2}
   \left|\mathrm{d}  f(x)\right|^2,
\end{equation}
and the switching part is
\begin{equation*}
  R_x \mathrm{e}^{\phi(x,i)}=\sum_{j\in S}q_{ij}(x)[e^{\phi(x,i)}-e^{\phi(x,j)}].
\end{equation*}
The proof is completed.
\end{proof}
\subsubsection{Single valued Hamiltonian}
Based on the preparation of the above subsection, our next task is to solve a principal-eigenvalue problem to obtain a single valued Hamiltonian.
\par
Denote
\begin{equation}\label{eqn_BXI}
    B_{x,\mathrm{d}f(x)}(i)=H_l(x,i)+H_q(x).
\end{equation}

From \eqref{eqn_HFPHi}, one has
\begin{equation*}
	H_{f,\phi}(x,i)=B_{x,\mathrm{d}f(x)}(i)+\mathrm{e}^{-\phi(x,i)}R_x\mathrm{e}^{\phi(x,i)}.
\end{equation*}

One approach, to obtain a single valued Hamiltonian from the multivalue Hamiltonian $H$, is that the principal-eigenvalue problem holds. That is, we try to select eigenvalue functions $\phi(x,i)$ such that
\begin{equation}\label{eqn_gfphi}
	g_{f,\phi}(x):=B_{x,\mathrm{d}f(x)}(i)+\mathrm{e}^{-\phi(x,i)}R_x\mathrm{e}^{\phi(x,i)}.
\end{equation}
Note that $g_{f,\phi}$ is independent of $i$. 
By multiplying $\mathrm{e}^{\phi(x, i)}$ on two sides of \eqref{eqn_gfphi} with given point $x$, we have a new form
\begin{equation*}
    g_{f,\phi}(x)\mathrm{e}^{\phi(x, i)}=(B_{x,\mathrm{d}f(x)}(i)+R_x)\mathrm{e}^{\phi(x,i)}.
\end{equation*}
\par
Based on the above short analysis, we build a principal-eigenvalue problem and get a single valued Hamiltonian which will often used later.
\begin{proposition}[Principal-eigenvalue problem]\label{lem_eigen} Let Assumptions \ref{asm_conservative_manifold} and \cref{asm_conti_manifold} be satisfied. 
	For each $(x,\mathrm{d}f(x))$, there exist $\bar{\phi}>0$ and a unique eigenvalue $\tau(x,\mathrm{d}f(x))\in \mathbb{R}$ such that
\begin{equation}\label{eigenvalue}
	Q_{x,p}\bar{\phi}(i)=\tau(x,\mathrm{d}f(x))\bar{\phi}(i),
\end{equation}
with the operator $Q_{x,p}=B_{x,p}+R_x$. In detail, $\tau(x,\mathrm{d}f(x))$ is given by
\begin{equation}\label{eqn_eigenvalue}
\begin{split}
	\tau(x,\mathrm{d}f(x))&=\inf_{\pi\in \mathcal{P}(S)}\sup_{g>0}\int_M\frac{-Q_{x,p}g(z)}{g(z)}\pi(\text{d}z)\\
	&=-\sup_{\pi\in\mathcal{P}(S)}\inf_{g>0}\int_M\frac{Q_{x,p}g(z)}{g(z)}\pi(\text{d}z).
\end{split}
\end{equation}
Moreover, since 
\begin{equation}\label{eqn:eigenvalue}
	\tau(x,\mathrm{d}f(x))=-\mathcal{H}(x,\mathrm{d}f(x)),
\end{equation}
as a by-product, combining \eqref{eqn_eigenvalue} and \eqref{eqn:eigenvalue}, one has a single valued Hamiltonian 
\begin{equation}\label{eqn_Hxp}
\begin{split}
	\mathbf{H}(x):=\mathcal{H}(x,\mathrm{d}f(x))&=\sup_{\pi\in\mathcal{P}(S)}\inf_{g>0}\int_M\frac{Q_{x,\mathrm{d}f(x)}g(z)}{g(z)}\pi(\text{d}z)\\
	&=\sup_{\pi\in\mathcal{P}(S)}\big\{\int_M B_{x,\mathrm{d}f(x)}(z)\pi(\text{d}z)-\mathcal{I}(x,\pi)\big\},
\end{split}
\end{equation}
where
\begin{equation}\label{eqn_I(x,pi)}
	\mathcal{I}(x,\pi)=-\inf_{g>0}\int_M\frac{R_xg(z)}{g(z)}\pi(\text{d}z).
\end{equation}
\end{proposition}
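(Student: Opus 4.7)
The plan is to reduce the statement to a finite-dimensional Perron-type eigenvalue problem together with the Donsker-Varadhan variational representation of its principal eigenvalue. Since $|S|=N<\infty$ (Assumption \ref{asm_conservative_manifold}), for each fixed $x\in M$ and $p=\mathrm{d}f(x)\in T_x^*M$ the operator $Q_{x,p}=B_{x,p}+R_x$ acts on $\mathbb{R}^S\cong\mathbb{R}^N$ and is just an $N\times N$ matrix. Its off-diagonal entries coincide with those of $R_x$, namely $q_{ij}(x)\ge 0$, so $Q_{x,p}$ is a Metzler matrix; irreducibility of the intensity matrix $R_x$ (Assumption \ref{asm_conservative_manifold}) transfers verbatim to $Q_{x,p}$ since the diagonal perturbation $B_{x,p}$ does not alter the off-diagonal support.

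With this in hand, I would first produce the eigenpair $(\tau,\bar\phi)$ of \eqref{eigenvalue} by the classical Perron-Frobenius theorem applied to the shifted matrix $Q_{x,p}+cI$ for any $c>0$ large enough that this matrix has only non-negative entries. The shift is then an irreducible non-negative matrix, so Perron-Frobenius yields a simple, strictly dominant real eigenvalue together with a strictly positive eigenvector $\bar\phi$, unique up to positive scaling. Undoing the shift gives the principal eigenvalue $\tau(x,p)$ of $Q_{x,p}$ and the same eigenvector $\bar\phi>0$.

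For the variational formula \eqref{eqn_eigenvalue} I would invoke the Donsker-Varadhan characterisation of the principal eigenvalue of an irreducible Metzler matrix. One direction is immediate: plugging $g=\bar\phi$ into $(Q_{x,p}g)/g$ gives a constant equal (up to the sign convention) to $\tau$, so the inner infimum in \eqref{eqn_eigenvalue} realises this value at the invariant measure associated to $\bar\phi$. The reverse inequality is a minimax interchange, which I would prove via Sion's theorem after writing $g=e^\psi$ with $\psi$ ranging over a suitable compact cube (the invariance $(Qg)/g=(Q(cg))/(cg)$ for $c>0$ allows me to normalise $\psi$ at a base state so that only a bounded set of $\psi$ matters), together with the joint concavity in $\pi$ and convexity in $\psi$ of $(\pi,\psi)\mapsto\sum_i\pi_i\,e^{-\psi(i)}(Q_{x,p}e^{\psi})(i)$. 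Finally, because $B_{x,p}$ is a diagonal multiplication operator, one has
\[
\frac{(Q_{x,p}g)(i)}{g(i)}=B_{x,p}(i)+\frac{(R_xg)(i)}{g(i)}
\]
pointwise, and inserting this into the variational formula and using the definition \eqref{eqn_I(x,pi)} of $\mathcal{I}$ delivers both formula \eqref{eqn_Hxp} for $\mathcal{H}$ and the identity \eqref{eqn:eigenvalue}.

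The genuine difficulty is the minimax swap underpinning the Donsker-Varadhan representation; everything else is elementary linear algebra or bookkeeping. In the finite $|S|<\infty$ setting this minimax step is standard, but one must still verify the compactness/continuity hypotheses of Sion's theorem (which is why the reduction to a bounded cube via the scaling-invariance of $(Qg)/g$ is essential) and keep careful track of the sign conventions linking the eigen-equation $Q_{x,p}\bar\phi=\tau\bar\phi$ to the identity $\tau=-\mathcal{H}$ and to the ``$-Q_{x,p}$'' convention embedded in the first equality of \eqref{eqn_eigenvalue}.
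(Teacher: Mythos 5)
Your proposal is correct and takes essentially the same route as the paper: the published proof is a one-sentence citation to the Perron--Frobenius theorem and the Donsker--Varadhan variational representation from \cite{DV1975}, and you simply unpack that citation (Metzler-matrix reduction, shift to an irreducible nonnegative matrix, Collatz--Wielandt and a Sion-type minimax swap). The sign convention you flag should indeed be reconciled: the Perron pair satisfies $Q_{x,p}\bar\phi = \lambda^*\bar\phi$ with $\lambda^* = \sup_{\pi}\inf_{g>0}\int (Q_{x,p}g)/g\,\mathrm{d}\pi = \mathcal{H}(x,\mathrm{d}f(x))$, so for \eqref{eigenvalue} to be consistent with \eqref{eqn_eigenvalue}--\eqref{eqn:eigenvalue} it should read $-Q_{x,p}\bar\phi = \tau(x,\mathrm{d}f(x))\bar\phi$; also note that Sion's theorem only requires one of the two sets to be compact, and $\mathcal{P}(S)$ already is, so the normalisation of $\psi$ to a bounded cube is unnecessary.
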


\begin{proof}
Using Assumptions \ref{asm_conservative_manifold} and \ref{asm_conti_manifold}, from the Perron-Frobenius theorem in \cite{DV1975}, we can obtain there exists a unique eigenvalue with associated eigenfunction which have the represent \eqref{eqn_eigenvalue}. 
\end{proof}

\subsection{Exponential tightness on Riemannian manifold}
In this section, the key step, in obtaining exponential tightness on a Riemannian manifold, is to find a good containment function that can limit our analysis to a compact set.
\par
We now turn to give the definition of good containment functions, which is of importance to get the exponential tightness and comparison principle.
\begin{definition}[Good containment function]
	We say that $\Upsilon:M\to \mathbb{R}$ is a good containment function (for $H$) if 
	\begin{enumerate}[$(\Upsilon a)$]
		\item $\Upsilon\geq 0$ and there exists a point $x_0$ such that $\Upsilon(x_0)=0$,
		\item $\Upsilon$ is twice continuously differentiable,
		\item for every $c\geq 0$, the set $\{x\in M ~|~ \Upsilon(x)\leq c\}$ is compact,
		\item we have $\sup_x\mathcal{H}(x,\mathrm{d}\Upsilon(x))<\infty$.
	\end{enumerate}
\end{definition}

\begin{lemma}\label{lem_fr}
    Let $M$ be a complete Riemannian manifold of dimension $d$. Fix $x_0\in M$ and define $r(x)=d(x,x_0)$. Under \Cref{ass_b_linear_growth}, there 
    exists an nonnegative smooth function $f\in C^\infty(M)$ satisfying   
\begin{equation}\label{eqn_fand_r}
        |f-r|\leq 1
    \end{equation}
    and
\begin{equation}\label{eqn_uniform_bound}
        |\mathrm{d}f|\leq 2
    \end{equation}
    such that 
\begin{equation}\label{eqn_Upsilon}
\Upsilon(x)=\frac{1}{2}\log(1+f^2(x)) 
\end{equation}
is a good containment function.
\end{lemma}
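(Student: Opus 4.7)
My plan is to split the proof into the construction of a smooth surrogate $f$ for the distance function $r(x) = d(x, x_0)$ and a verification of the four properties of a good containment function for $\Upsilon = \frac{1}{2}\log(1+f^2)$. For the first part I would invoke a classical smooth approximation result for the distance function on a complete Riemannian manifold: given any tolerance $\epsilon > 0$, there exists $\tilde f \in C^\infty(M)$ with $|\tilde f - r| < \epsilon$ and $|\mathrm{d}\tilde f| < 1 + \epsilon$ (the gradient bound comes for free because $r$ is $1$-Lipschitz). Taking $\epsilon = 1/2$ and then normalizing $f := \tilde f - \tilde f(x_0)$ yields $|f - r| \leq 1$ and $|\mathrm{d} f| \leq 3/2 \leq 2$, while arranging $f(x_0) = 0$, which is convenient for $(\Upsilon a)$.

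Given such an $f$, properties $(\Upsilon a)$ and $(\Upsilon b)$ are immediate: $\Upsilon \geq 0$ since $f^2 \geq 0$, $\Upsilon(x_0) = 0$ since $f(x_0) = 0$, and $\Upsilon$ inherits smoothness from $f$. For $(\Upsilon c)$ I would observe that $\Upsilon(x) \leq c$ forces $|f(x)| \leq \sqrt{e^{2c}-1}$, hence $r(x) \leq |f(x)| + 1$ is bounded, so the sublevel set is contained in a closed metric ball around $x_0$, which is compact by Hopf--Rinow since $M$ is complete and connected.

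The only nontrivial step is $(\Upsilon d)$, and this is precisely where the logarithm in the definition of $\Upsilon$ is essential. A direct computation gives $\mathrm{d}\Upsilon = \frac{f\,\mathrm{d}f}{1+f^2}$, so $|\mathrm{d}\Upsilon| \leq \frac{2|f|}{1+f^2} \leq 1$. Using the expression \eqref{eqn_bfHH} for $\mathcal{H}$ together with the nonnegativity of the Donsker--Varadhan functional $\mathcal{I}(x,\pi)$ (test $g \equiv 1$ in \eqref{eqn_I(x,pi)}, noting that $R_x$ annihilates constants), I would bound
$$\mathcal{H}(x, \mathrm{d}\Upsilon(x)) \leq \sup_{i\in S} B_{x, \mathrm{d}\Upsilon(x)}(i) \leq \sup_{i\in S} |b(x,i)|\cdot|\mathrm{d}\Upsilon(x)| + \tfrac{1}{2}|\mathrm{d}\Upsilon(x)|^2.$$
Combining \Cref{ass_b_linear_growth} with the elementary estimate
$$(1+r(x))|\mathrm{d}\Upsilon(x)| \leq \frac{2(|f(x)|+2)|f(x)|}{1+f^2(x)} = \frac{2f^2(x)+4|f(x)|}{1+f^2(x)} \leq 4$$
(via $r \leq |f|+1$) shows that $\mathcal{H}(x, \mathrm{d}\Upsilon(x))$ is uniformly bounded in $x \in M$.

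I expect the genuine difficulty to be conceptual rather than computational: the choice of $\Upsilon$ must offset the linear growth of $b$, and that is exactly what a logarithm does, since $(1+r)|\mathrm{d}\Upsilon|$ remains bounded while $(1+r)|\mathrm{d}f|$ or $(1+r)|\mathrm{d}(f^2/2)|$ would not. The smoothing of the distance function is a standard ingredient, so once the right functional form of $\Upsilon$ is recognized the verification is essentially algebraic.
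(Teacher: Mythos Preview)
Your proposal is correct and follows essentially the same route as the paper: compute $\mathrm{d}\Upsilon = f\,\mathrm{d}f/(1+f^2)$, discard the nonnegative Donsker--Varadhan term $\mathcal{I}$, and combine the linear growth of $b$ with $r\le |f|+1$ to bound $\mathcal{H}(x,\mathrm{d}\Upsilon(x))$ uniformly, while $(\Upsilon c)$ comes from Hopf--Rinow. You are in fact more explicit than the paper both in constructing the smooth surrogate $f$ and in justifying $\mathcal{I}\ge 0$; the only cosmetic discrepancy is that your normalized $f=\tilde f-\tilde f(x_0)$ need not be nonnegative as the lemma statement asserts, but since your estimates only use $|f|$ this does not affect the argument.
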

\begin{proof}
    Clearly $\Upsilon\geq 0$ and $\Upsilon(x_0)=0$, and $\Upsilon\in C^\infty(M)$.

    Now fix $c\geq 0$. By the continuity of $\Upsilon$, the set $\{x\in M~|~\Upsilon(x)\leq c\}$ is closed. By definition the set is bounded, and as $M$ is a complete finite dimensional Riemannian manifold, also compact.

    Note that for all $x\in M$,
    \begin{equation}\label{eqn_dUps}
\text{d}\Upsilon(x)=\frac{f(x)}{1+f^2(x)}\mathrm{d}f(x).
    \end{equation}

   This, together with  \Cref{ass_b_linear_growth}, \eqref{eqn_uniform_bound} and \eqref{eqn_fand_r}, for $z\in S$, we first estimate that
\begin{equation}\label{eqn_useoneside}
 \begin{split}
   b(x,z)  \mathrm{d}\Upsilon(x)&=b(x,z) \mathrm{d}f(x)\frac{f(x)}{1+f^2(x)} \\
    &\leq |b(x,z)| \cdot |\mathrm{d}f(x)|\cdot\frac{f(x)}{1+f^2(x)} \\&\leq 2C_b(1+r(x))\frac{f(x)}{1+f^2(x)} \\
    &\leq 2C_b(2+f(x))\frac{f(x)}{1+f^2(x)}.
 \end{split}
 \end{equation}
 Hence, $\sup_{x,z}b(x,z)\dd \Upsilon(x)<\infty$.
    Now recall the Hamiltonian $\mathcal{H}$ in \eqref{eqn_Hxp}, from \eqref{eqn_useoneside}, we obtain
\begin{align*}
    \mathcal{H}(x,\mathrm{d}\Upsilon(x))&=\sup_{\pi\in\mathcal{P}(S)}\big\{\int_M B_{x,\mathrm{d}\Upsilon(x)}(z)\pi(\text{d}z)-\mathcal{I}(x,\pi)\big\}\\
    &\leq \int_M B_{x,\mathrm{d}\Upsilon(x)}(z)\pi(\text{d}z)\\
     &=\int_M \left( b(x,z)\mathrm{d}\Upsilon(x)+\frac{1}{2}| \mathrm{d}\Upsilon(x)|^2\right)\pi(\mathrm{d}z)\\
     &\leq C\int_M \left(\frac{f(x)}{1+f^2(x)}+\frac{f^2(x)}{1+f^2(x)}+\frac{f^2(x)}{(1+f^2(x))^2}\right)\pi(\mathrm{d}z),
\end{align*}
where the first inequality uses the definition of supremum and $\mathcal{I}(x,\pi)$ is nonnegative.
We conclude that $\sup_x \mathcal{H}(x,\mathrm{d}\Upsilon(x))<\infty$, which implies that $\Upsilon$ is a good containment function.
\end{proof}
Applying the good containment function \eqref{eqn_Upsilon}, we proceed to consider the exponential tightness of the system $(X_n(t),\Lambda_n(t))$. To do this, we note that by \cite[Corollary 4.19]{FK2006} it sufﬁces to establish the exponential compact containment condition.
\begin{proposition}[Exponential compact containment condition]\label{pro_expo}
 Let $(X_n(t),\Lambda_n(t))$ be a Markov process corresponding to $A^M_n$, and $\Upsilon(x)$ be a good containment function in \eqref{eqn_Upsilon}. 
    Suppose that the sequence $(X_n(t),\Lambda_n(t))$ is exponentially tight with speed $n$.
    Then the sequence $(X_n(t),\Lambda_n(t))$ satisfies the exponential compact containment condition with speed $n$: for every $T > 0$ and $a \geq  0$, there exists a compact set $K_{a,T}\subseteq M$ such that
    \begin{equation*}
        \limsup_{n\to \infty} \frac{1}{n}\log \mathbb{P}[(X_n(t),\Lambda_n(t))\notin K_{a,T}\times S ~\mbox{for~some}~t\leq T]\leq -a.
    \end{equation*}
\end{proposition}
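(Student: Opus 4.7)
The plan is to apply the standard exponential‑martingale / Lyapunov argument of Feng--Kurtz, using the good containment function $\Upsilon$ from \Cref{lem_fr} as the test function. First, since $\Upsilon$ depends only on the slow variable $x$, the switching term in \eqref{eqn_HnF} vanishes (because $e^{n(\Upsilon(x)-\Upsilon(x))}-1=0$), giving
\begin{equation*}
H_n\Upsilon(x,i) = b(x,i)\,\mathrm{d}\Upsilon(x) + \tfrac{1}{2}|\mathrm{d}\Upsilon(x)|^2 + \tfrac{1}{2n}\Delta_M\Upsilon(x).
\end{equation*}
The estimates in the proof of \Cref{lem_fr} already yield uniform bounds on $b(x,i)\,\mathrm{d}\Upsilon(x)$ and $\tfrac{1}{2}|\mathrm{d}\Upsilon(x)|^2$. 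For the third term, one uses
\begin{equation*}
\Delta_M\Upsilon = \frac{1-f^2}{(1+f^2)^2}|\mathrm{d}f|^2 + \frac{f}{1+f^2}\Delta_M f,
\end{equation*}
together with $|\mathrm{d}f|\leq 2$, $|f|\leq r+1$ and a uniform bound on $\Delta_M f$ (available from the Greene--Wu smoothing of the distance function used to build $f$). Combining these, there exists $C^\star<\infty$ such that $\sup_{n,x,i}H_n\Upsilon(x,i)\leq C^\star$.

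Second, by It\^o's formula applied to $e^{n\Upsilon(X_n(t))}$ together with Dynkin's formula for the pure‑jump component $\Lambda_n$, and using $H_n f = \tfrac{1}{n}e^{-nf}A_n^M e^{nf}$, the process
\begin{equation*}
N_n(t):=\exp\!\left(n\Upsilon(X_n(t))-n\!\int_0^t\! H_n\Upsilon(X_n(s),\Lambda_n(s))\,\mathrm{d}s\right)
\end{equation*}
is a nonnegative local martingale with $N_n(0)=e^{n\Upsilon(x_0)}=1$; localization along the compact sublevel sets of $\Upsilon$ (which are compact by the good containment property) together with Fatou's lemma turns it into a true supermartingale. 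For $L>0$ introduce the stopping time $\tau_n^L:=\inf\{t\geq 0:\Upsilon(X_n(t))\geq L\}$ and apply optional stopping at $T\wedge\tau_n^L$:
\begin{equation*}
1\;\geq\;\mathbb{E}\bigl[N_n(T\wedge\tau_n^L)\,\bONE_{\{\tau_n^L\leq T\}}\bigr] \;\geq\; e^{nL}\,e^{-nC^\star T}\,\mathbb{P}\bigl[\tau_n^L\leq T\bigr],
\end{equation*}
so $\frac{1}{n}\log\mathbb{P}[\tau_n^L\leq T]\leq -(L-C^\star T)$.

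To conclude, set $L:=a+C^\star T$ and $K_{a,T}:=\{x\in M:\Upsilon(x)\leq L\}$, which is compact by $(\Upsilon c)$. Since the event $\{(X_n(t),\Lambda_n(t))\notin K_{a,T}\times S\text{ for some }t\leq T\}$ coincides with $\{\tau_n^L\leq T\}$, the previous display yields the required exponential bound $\limsup_n \frac{1}{n}\log\mathbb{P}[\,\cdot\,]\leq -a$. The main obstacle is the global control of $\Delta_M\Upsilon$: because $\Upsilon$ is built from a smooth approximation $f$ of the (generally non‑smooth, cut‑locus‑affected) distance function $r$, one has to invoke a Greene--Wu‑type smoothing to secure a uniform bound on $\Delta_M f$ valid on the whole of $M$. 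A secondary technical point is the passage from local to genuine supermartingale, handled routinely by localization through the sublevel sets of $\Upsilon$.
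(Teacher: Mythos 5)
Your overall strategy — set $f=\Upsilon$ in $H_n$, observe that the switching term drops out because $\Upsilon$ is independent of $i\in S$, build the exponential supermartingale $N_n$, localize at the exit time of a sublevel set of $\Upsilon$, and conclude via optional stopping — is the standard Feng--Kurtz Lyapunov argument and is the same route the paper takes (it delegates the details to \cite[Proposition 5.4]{HKX2023}, adapting only the construction of $\Upsilon$ via the smooth approximant $f$ from \Cref{lem_fr}).

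There is, however, a genuine gap in your justification of the bound $\sup_{n,x,i}H_n\Upsilon(x,i)\leq C^\star$. You claim a \emph{global} uniform bound on $\Delta_M f$ by appealing to ``Greene--Wu-type smoothing.'' This is not available: the Greene--Wu approximation gives $C^0$- and gradient-control of the smoothed distance function, but it does not control second derivatives without additional geometric input. Indeed, a Hessian or Laplacian bound on (smooth approximants of) the distance $r$ would require lower curvature bounds via the Hessian comparison theorem, and the paper deliberately imposes no curvature assumptions; accordingly, \Cref{lem_fr} asserts only $|f-r|\leq 1$ and $|\dd f|\leq 2$, and says nothing about $\Delta_M f$. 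On a general connected complete manifold $\Delta_M f$ can be unbounded, so the global $C^\star$ you invoke does not exist.

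The good news is that your argument does not actually need a global bound, and the repair is short. In the optional stopping step you already work on the event $\{s<\tau_n^L\}$, on which $X_n(s)\in\{\Upsilon\leq L\}$, and this sublevel set is \emph{compact} by $(\Upsilon c)$. On a compact set $\Delta_M\Upsilon$ is bounded by smoothness, say by $D_L$, so
\begin{equation*}
H_n\Upsilon(X_n(s),\Lambda_n(s))
\;\leq\; \sup_{z\in M,\,i\in S}\Big(b(z,i)\,\dd\Upsilon(z)+\tfrac12|\dd\Upsilon(z)|^2\Big) + \frac{D_L}{2n}
\;=:\; C_0 + \frac{D_L}{2n}
\end{equation*}
for all $s<\tau_n^L$, where $C_0<\infty$ is the genuinely $n$- and $L$-uniform constant furnished by \Cref{lem_fr}. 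Since you take $\limsup_{n\to\infty}\frac1n\log$, the term $D_L/(2n)$ is harmless: for any $\eta>0$ and $n$ large you get $\frac1n\log\PR[\tau_n^L\leq T]\leq -(L-(C_0+\eta)T)$, and then $L:=a+(C_0+1)T$ gives the claim with $K_{a,T}=\{\Upsilon\leq L\}$. So the obstacle you flagged as ``main'' is a red herring; the real point is that localization plus the $1/n$ prefactor makes the Laplacian term asymptotically irrelevant, with no global second-order control needed.

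One further minor point: you normalize $N_n(0)=e^{n\Upsilon(x_0)}=1$ using the fixed starting point. Under \Cref{thm_LDPmanifold} the initial law $X_n(0)$ is only assumed to satisfy an LDP; you should replace $1$ by $\bE[e^{n\Upsilon(X_n(0))}]$ and note that exponential tightness of the initial data keeps this factor sub-exponential after taking $\frac1n\log$, so the conclusion is unaffected.
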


\begin{proof}
   The proof is similar to our work \cite[Proposition 5.4]{HKX2023}. 
   The different parts of the proof are that $\Upsilon$ is related to a smooth function $f\in C^\infty(M)$ based on abstract properties of $\Upsilon$ verified in \Cref{lem_fr} since we work on a complete Riemannian manifold.
\end{proof}

\subsection{Comparison principle on Riemannian manifold}
\subsubsection{The smooth distance function to prove comparison principle}
Finding a smooth distance function on a complete Riemannian manifold to prove the comparison principle is the technology parts to prove \Cref{thm_LDPmanifold}. Inspired by \cite{FK2006}, we consider a Cauchy problem about $H_q$ defined in \eqref{eqn_Hq},
\begin{equation*}
    \frac{\partial}{\partial t}u(t,x_0)=H_qu(t,x_0),~~~~u(0,x_0)=h(x_0).
\end{equation*}
The solution $u(t,x_0)$ is given by the Hopf-Lax formula \cite[Proposition 9.4.1]{BGL2014MR3155209},
\begin{equation}\label{eqn_solution_u}
\begin{split}
u(t,x_0)&=\sup\left\{h(x(t))-\frac{1}{2}\int^t_0 g_{x(s)}(\dot{x}(s),\dot{x}(s))\mathrm{d}s\right\}\\
    &=\sup_{y_0\in M}\left\{h(y_0)-\frac{d^2(x_0,y_0)}{2t}\right\}.
\end{split}
\end{equation}
Then, we obtain a  distance function from \eqref{eqn_solution_u}, 
\begin{equation}\label{eqn_distance_function}
    d^2(x_0,y_0)=\inf\left\{\int^1_0 |\dot{x}(s)|^2\mathrm{d}s:x(t)\in M; 0\leq t\leq 1, x(0)=x_0,x(1)=y_0\right\}.
\end{equation}
 We continue to give the differential property of the distance function, the proof was shown in \cite[Appendix C.1]{KRV2019}.
\begin{lemma}\label{lem_dddd}
    Let $x$, $y\in M$ and assume that $x\notin \mbox{cut}(y)$ (or equivalently, $y\notin \mbox{cut}(x)$), where $\mbox{cut}(\cdot)$ is a cut-locus. Then for all $V\in T_yM$ we have
    \begin{equation*}
		\mathrm{d}_y(d^2(x,\cdot))(y)(V)=2\langle \dot{\gamma}(1),~V
\rangle_{g(y)},    
\end{equation*}
where $\gamma: [0,1]\to M$ is the unique geodesic of minimal length connecting $x$ and $y$. Consequently, we obtain
\begin{equation}\label{eqn_tauxy}
   \tau_{x,y} \mathrm{d}_x(d^2(\cdot,y))(x)=-\mathrm{d}_y(d^2(x,\cdot))(y).
\end{equation}
\end{lemma}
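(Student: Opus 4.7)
The plan is to reduce everything to the classical first variation formula for the energy functional on a Riemannian manifold. First I would recall the standard fact that, since $x \notin \mathrm{cut}(y)$, the squared distance $d^2(x,\cdot)$ is smooth on a neighborhood of $y$, and the unique minimising geodesic $\gamma:[0,1]\to M$ from $x$ to $y$ depends smoothly on its endpoints in this regime (this is a consequence of the local diffeomorphism property of $\exp_y$ on a star-shaped neighborhood of $0\in T_yM$ avoiding the tangent cut locus). In particular, by \eqref{eqn_distance_function} one has $d^2(x,y) = \int_0^1 |\dot{\gamma}(s)|^2 \mathrm{d}s$, and $|\dot{\gamma}|$ is constant along $\gamma$.

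Next, to compute $\mathrm{d}_y(d^2(x,\cdot))(y)(V)$, I would pick a smooth variation $\gamma_s:[0,1]\to M$ of $\gamma$ with $\gamma_s(0)=x$ fixed, $\gamma_0=\gamma$, and $\partial_s\gamma_s(1)|_{s=0}=V$, so that $\gamma_s$ is the (still unique) minimising geodesic from $x$ to $\gamma_s(1)$ for small $s$; then $d^2(x,\gamma_s(1))$ equals the energy of $\gamma_s$. The first variation formula for energy gives
\begin{equation*}
\frac{\mathrm{d}}{\mathrm{d}s}\Big|_{s=0}\int_0^1 |\dot{\gamma}_s(t)|^2\,\mathrm{d}t
= 2\langle\dot{\gamma}(1),V\rangle_{g(y)} - 2\langle\dot{\gamma}(0),0\rangle_{g(x)} - 2\int_0^1 \langle \nabla_t\dot{\gamma}(t), J(t)\rangle\,\mathrm{d}t,
\end{equation*}
where $J(t)=\partial_s\gamma_s(t)|_{s=0}$. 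Because $\gamma$ is a geodesic, $\nabla_t\dot{\gamma}\equiv 0$, and the endpoint at $x$ is fixed, so only the boundary term at $t=1$ survives, yielding the stated identity.

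For the consequence \eqref{eqn_tauxy}, I would repeat the first variation calculation at the other endpoint: varying $x$ while keeping $y$ fixed gives a variation whose boundary contribution appears at $t=0$ with the opposite sign, producing
\begin{equation*}
\mathrm{d}_x(d^2(\cdot,y))(x)(U) = -2\langle \dot{\gamma}(0),U\rangle_{g(x)}, \qquad U\in T_xM.
\end{equation*}
Since $\dot{\gamma}$ is parallel along $\gamma$, one has $\tau_{x,y}\dot{\gamma}(0)=\dot{\gamma}(1)$, and parallel transport is an isometry; interpreting $\tau_{x,y}$ on covectors by duality (the convention consistent with \eqref{eqn_tauxy}) gives $\tau_{x,y}\mathrm{d}_x(d^2(\cdot,y))(x)(V) = -2\langle \dot{\gamma}(1),V\rangle_{g(y)} = -\mathrm{d}_y(d^2(x,\cdot))(y)(V)$, as claimed.

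The only genuinely delicate point is the smoothness of $d^2(x,\cdot)$ near $y$ and the smooth dependence of $\gamma$ on endpoints off the cut locus, which is what legitimises differentiating under the energy integral; once this is in hand the rest is bookkeeping of the first variation formula together with the parallel-transport identification of the two cotangent spaces. Since this smoothness is a classical result (and is precisely what the hypothesis $x\notin\mathrm{cut}(y)$ is designed to guarantee), I would simply cite it and carry out the variational computation.
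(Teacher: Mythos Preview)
Your argument via the first variation of energy is correct and is the standard route to this identity. The paper itself does not give a proof but simply cites \cite[Appendix C.1]{KRV2019}; your computation is presumably exactly what appears there, so there is nothing to compare.
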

Noting that \eqref{eqn_tauxy} shows the relation of the derivates of distance function about variable $x$ and $y$ on Riemannian maniford, it is vital when we prove the comparison principle.
\par
Our approach to proving the comparison principle is to double variables, as in the classical setting of viscosity solutions in Euclidean spaces, using the distance function as a penalizing function. However, the distance function is not smooth on $M$ because the shortest path between two points (i.e. the geodesic) may not be unique, which was briefly discussed in the \Cref{se_Riemannian_manifold}.
we must modify further penalizations to force the extrema to be attained. To this goal, taking advantage of  the property of the cut-locus, we have a smooth distance function or good penalization function.
\begin{lemma}\label{lem_smooth_function}
    Consider a compact set $K \subseteq M$. Then there is $\delta = \delta_K > 0$ and a smooth function $\Psi = \Psi_K$ with $\Psi(x,~y) \in C^\infty(M^2)\cap C_b(M^2)$ such that $\Psi(x,~y) = \frac{1}{2}d^2(x,~y)$ if $d(x,~y) <\delta $ and $x$, $y\in K$.
\end{lemma}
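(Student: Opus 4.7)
The plan is to build $\Psi$ by smoothly truncating $\tfrac{1}{2}d^2$ so that it is cut off before leaving the region where $d^2$ is smooth. Recall that $d^2 : M \times M \to [0,\infty)$ is $C^\infty$ on the open set $U = \{(x,y) \in M \times M : y \notin \mathrm{cut}(x)\}$, and that on any compact set $K$ the injectivity radius $i(K)$ is strictly positive. The obstruction to global smoothness lies exactly at the cut-locus, so the whole task is to keep the truncation inside $U$.

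First I would enlarge $K$: fix a relatively compact open neighborhood $K'$ of $K$ containing the closed $i(K)$-neighborhood of $K$, so that $\overline{K'}$ is compact and has its own strictly positive injectivity radius. Set
\[
\delta := \tfrac{1}{3} \min\{ i(K),\, i(\overline{K'}) \} > 0.
\]
For every $(x,y)$ with $x \in K$ and $d(x,y) \leq 3\delta$ one has $y \in K'$ and $y$ lies in the injectivity domain of $x$, so $(x,y) \in U$. Define
\[
A := \{(x,y) : x,y \in K,\ d(x,y) \leq \delta\},
\qquad
W := \{(x,y) : x,y \in K',\ d(x,y) < 2\delta\}.
\]
Then $A$ is compact, $W$ is an open neighborhood of $A$, $\overline{W}$ is compact, and $\overline{W} \subset U$ by construction; in particular $d^2 \in C^\infty$ on a neighborhood of $\overline{W}$.

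Since $A$ is compact, $W$ is open, and $A \subset W$, the smooth Urysohn lemma on the finite-dimensional manifold $M \times M$ yields $\chi \in C_c^\infty(M \times M;[0,1])$ with $\chi \equiv 1$ on a neighborhood of $A$ and $\mathrm{supp}(\chi) \subset W$. I then set
\[
\Psi(x,y) :=
\begin{cases}
\tfrac{1}{2}\, \chi(x,y)\, d^2(x,y), & (x,y) \in W, \\
0, & (x,y) \notin \mathrm{supp}(\chi).
\end{cases}
\]
The two prescriptions agree (both equal $0$) on the overlap $W \setminus \mathrm{supp}(\chi)$, so $\Psi$ is well-defined on $M \times M$. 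On $W$ it is smooth as a product of two smooth functions; on the complement of $\mathrm{supp}(\chi)$ it is identically $0$ and hence smooth; these two open sets cover $M \times M$, giving $\Psi \in C^\infty(M \times M)$. Boundedness is immediate because $\mathrm{supp}(\chi)$ is compact and $d^2$ is continuous there.

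Finally, on $A$ the cutoff $\chi$ equals $1$ and $(x,y) \in W \subset U$, so $\Psi(x,y) = \tfrac{1}{2} d^2(x,y)$. In particular this holds whenever $x,y \in K$ and $d(x,y) < \delta$, which is the required identity. I expect the only delicate point to be the choice of $\delta$ and the enlargement $K \subset K'$: if one only uses $i(K)$ one cannot guarantee that points $(x,y)$ with $x \in K$ and $y$ just outside $K$ but within $2\delta$ still lie in $U$, so the non-smoothness of $d^2$ could intrude on $\mathrm{supp}(\chi)$. Bounding $\delta$ by $\tfrac{1}{3} i(\overline{K'})$ rather than $\tfrac{1}{3} i(K)$ is what closes this gap, and is the main bookkeeping step in the proof.
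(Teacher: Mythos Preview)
Your proof is correct and takes a genuinely different route from the paper. The paper truncates by composing with a one-dimensional cutoff: it picks $\Delta=\tfrac13 R$ with $R<i(K)$, a smooth increasing $\Phi:\bR\to\bR$ equal to the identity on $(-\Delta,\Delta)$ and constant outside $(-2\Delta,2\Delta)$, and sets $\Psi(x,y)=\tfrac12\Phi(d^2(x,y))$. You instead multiply $\tfrac12 d^2$ by a compactly supported bump $\chi\in C_c^\infty(M\times M)$ whose support is contained in the open set $U$ on which $d^2$ is smooth. The paper's construction is shorter and suffices for the downstream application (in the comparison-principle argument the optimizers $x_{\delta,m},y_{\delta,m}$ are confined to a fixed compact set), but the scalar composition $\Phi\circ d^2$ need not be globally $C^\infty$ on $M^2$: for $x$ far from $K$ one can have $i_x<\sqrt{2\Delta}$, producing cut-locus pairs $(x,y)$ with $d^2(x,y)<2\Delta$ at which $d^2$ fails to be smooth and $\Phi$ is not yet constant. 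Your construction sidesteps this by localizing in both variables via $\chi$, and the enlargement $K\subset K'$ together with $\delta\le\tfrac13 i(\overline{K'})$ is exactly what forces $\overline W\subset U$. So your argument is the more careful one and actually delivers $\Psi\in C^\infty(M^2)\cap C_b(M^2)$ as the lemma states.
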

\begin{proof}
    As $K$ is compact, the continuity of the injectivity radius $i$ implies there exists $R > 0 $ such that $i(K ) > R$. Pick $\Delta  =\frac{1}{3}R$ and let $\Phi: \mathbb{R}\to \mathbb{R}$ be a smooth increasing function such that 
    \begin{equation*}
\Phi(r)=
\begin{cases}
r ,&if ~\mbox{$|r|<\Delta$,}\\
1,&if~\mbox{$|r|>2\Delta$.}
\end{cases}
\end{equation*} Then the function $\Psi(x,y)=\frac{1}{2}\Phi(d^2(x,y))=\frac{1}{2}d^2(x,y)$, when $d(x,y)\leq \Delta$ and $x$, $y\in K$, is as desired.
\end{proof}

Along the ideas of double variable, we continue to give the following lemma with the good penalization function $\Psi(x,y)$ defined in \Cref{lem_smooth_function}.

\begin{lemma}[Lemma A.10 in \cite{CK_2017}]\label{lem_pro3.7}
Let $u$ be bounded and upper semicontinuous, let $v$ be bounded and lower semicontinuous, let $\Psi:M^2\to \mathbb{R}^+$ be good penalization functions in \Cref{lem_smooth_function} and let $\Upsilon$ be a good containment function in \eqref{eqn_Upsilon}. 
\par
Fix $\delta>0$. For every $m>0$ there exist points $x_{\delta,m}$, $y_{\delta,m}\in M$, such that
\begin{equation}\label{eqn:Text_function}
\begin{split}
&\frac{u(x_{\delta,m})}{1-\delta}-\frac{v(y_{\delta,m})}{1+\delta}-m\Psi(x_{\delta,m},y_{\delta,m})-\frac{\delta}{1-\delta}\Upsilon(x_{\delta,m})-\frac{\delta}{1+\delta}\Upsilon(y_{\delta,m})\\
&=\sup_{x,y\in M}\big\{\frac{u(x)}{1-\delta}-\frac{v(y)}{1+\delta}-m\Psi(x,y)-\frac{\delta}{1-\delta}\Upsilon(x)-\frac{\delta}{1+\delta}\Upsilon(y)\big\}.
\end{split}
\end{equation}
Additionally, for every $\delta>0$ we have that
\begin{enumerate}
\item \label{item_iaa} The set $\{x_{\delta,m},~y_{\delta,m} ~|~m >0\}$ is relatively compact in $M$.
\item \label{item_ibb} All limit points of $\{(x_{\delta,m},~y_{\delta,m})\}_{m >0}$ are of the form $(z,~z)$ and for these limit points we have 
\begin{equation*}
 u(z)-v(z)=\sup_{x\in M}u(x)-v(x).   
\end{equation*}
\item \label{item_icc} We have
\begin{equation*}
\lim_{m\to\infty}m\Psi(x_{\delta,m},~y_{\delta,m})=0.
\end{equation*}
\end{enumerate}
\end{lemma}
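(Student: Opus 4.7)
The plan is to execute the standard doubling-of-variables argument from viscosity solution theory, adapted to the manifold setting via the smooth penalization $\Psi$ from \Cref{lem_smooth_function} and the good containment function $\Upsilon$ from \Cref{lem_fr}. Throughout, abbreviate the objective inside the supremum in \eqref{eqn:Text_function} by $F_m(x,y)$ and set $M_m := \sup_{x,y\in M} F_m(x,y)$.

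First I would establish existence of maximizers. Since $u$ is bounded above and $v$ is bounded below, the quantity $u(x)/(1-\delta) - v(y)/(1+\delta)$ is bounded from above by a constant independent of $(x,y)$. Because $\Psi, \Upsilon \geq 0$ and the sublevel sets of $\Upsilon$ are compact by condition $(\Upsilon c)$, $F_m(x,y) \to -\infty$ whenever $\Upsilon(x)+\Upsilon(y) \to \infty$, so the supremum can be restricted to a compact subset of $M^2$. On this compact set $F_m$ is upper semicontinuous (using upper semicontinuity of $u$ and $-v$ together with continuity of $\Psi$ and $\Upsilon$), and the extreme value theorem supplies a maximizing pair $(x_{\delta,m}, y_{\delta,m})$.

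For claim (a) I would compare the optimal value with $F_m$ at the base point $(x_0, x_0)$. Using $\Upsilon(x_0)=0$ and $\Psi(x_0,x_0)=\tfrac{1}{2}d^2(x_0,x_0)=0$ (which falls within the range where $\Psi$ coincides with $\tfrac{1}{2}d^2$), one obtains $F_m(x_0,x_0) = u(x_0)/(1-\delta) - v(x_0)/(1+\delta)$, independent of $m$. Combined with the boundedness of $u$ and $-v$, the inequality $F_m(x_{\delta,m}, y_{\delta,m}) \geq F_m(x_0,x_0)$ yields an $m$-independent bound
\begin{equation*}
\frac{\delta}{1-\delta}\Upsilon(x_{\delta,m}) + \frac{\delta}{1+\delta}\Upsilon(y_{\delta,m}) + m\,\Psi(x_{\delta,m}, y_{\delta,m}) \leq C_\delta,
\end{equation*}
which by condition $(\Upsilon c)$ gives relative compactness of $\{(x_{\delta,m}, y_{\delta,m})\}_{m>0}$ and, as a byproduct, the estimate $\Psi(x_{\delta,m}, y_{\delta,m}) = O(1/m)$.

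For (b) and (c) I would pass to a convergent subsequence $(x_{\delta,m_k}, y_{\delta,m_k}) \to (x_*, y_*)$. Note $M_m$ is monotone non-increasing in $m$ and bounded, hence converges to some $M_\infty \in \mathbb{R}$. Upper semicontinuity of $u$ and $-v$, continuity of $\Psi$ and $\Upsilon$, and lower semicontinuity of $m_k \Psi$ along the subsequence give
\begin{equation*}
M_\infty \leq \frac{u(x_*)}{1-\delta} - \frac{v(y_*)}{1+\delta} - \limsup_{k\to\infty} m_k\Psi(x_{\delta,m_k}, y_{\delta,m_k}) - \frac{\delta}{1-\delta}\Upsilon(x_*) - \frac{\delta}{1+\delta}\Upsilon(y_*).
\end{equation*}
Testing $M_m$ against the diagonal point $(x_*, x_*)$ (where $\Psi(x_*,x_*)=0$) produces a matching lower bound that is independent of $m$. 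Comparing the two forces $\Psi(x_*, y_*)=0$; by construction of $\Psi$ in \Cref{lem_smooth_function}, this is equivalent to $d(x_*, y_*)=0$, i.e. $x_*=y_*=: z$. Substituting back then forces $\limsup_k m_k \Psi(x_{\delta,m_k}, y_{\delta,m_k}) = 0$, proving (c). Optimizing the diagonal lower bound over $z$ and comparing with the pointwise upper bound at $(z,z)$ finally identifies $z$ as a maximizer of the supremum in (b). The main obstacle I anticipate is the non-smoothness of $d(\cdot,\cdot)$ on the cut-locus, which is precisely the point addressed by using $\Psi$ instead of $\tfrac{1}{2}d^2$; once that replacement is in place, the argument reduces to the by-now standard Crandall–Ishii doubling scheme.
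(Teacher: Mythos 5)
The paper cites this lemma from \cite{CK_2017} and offers no proof of its own, so there is no internal argument to compare against; I assess your proposal on its own merits. It follows the standard Crandall--Ishii doubling-of-variables scheme and is on track for existence of maximizers, for item (a), and for the diagonal shape of the limit points (these use coercivity of $\Upsilon$, the uniform bound $m\Psi(x_{\delta,m},y_{\delta,m})\leq C_\delta$ obtained by testing against $(x_0,x_0)$, and that $\Psi$ vanishes exactly on the diagonal). Two steps in (c), however, need repair. Taking $\limsup_k$ of the identity $M_{m_k}=F_{m_k}(x_{\delta,m_k},y_{\delta,m_k})$ produces a $-\liminf_k m_k\Psi(\cdot)$ term in the upper bound, not $-\limsup_k m_k\Psi(\cdot)$; to get the full limit one must either pass to a further subsequence along which $m_k\Psi(\cdot)$ converges, or use the cleaner observation that $M_{m/2}\geq F_{m/2}(x_{\delta,m},y_{\delta,m})= M_m+\tfrac{m}{2}\Psi(x_{\delta,m},y_{\delta,m})$, so that $\tfrac{m}{2}\Psi(x_{\delta,m},y_{\delta,m})\leq M_{m/2}-M_m\to 0$ since the non-increasing bounded sequence $M_m$ converges. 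Also, the phrase ``Comparing the two forces $\Psi(x_*,y_*)=0$'' is misleading: before knowing $x_*=y_*$, your upper bound (in $v(y_*)$, $\Upsilon(y_*)$) and lower bound (in $v(x_*)$, $\Upsilon(x_*)$) are not comparable; the reduction to the diagonal comes from $\Psi(x_{\delta,m},y_{\delta,m})=O(1/m)\to 0$ and continuity of $\Psi$, not from that comparison.

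More substantively, you do not establish the displayed equality in item (b). What your argument produces is that the limit point $z$ maximizes the $\delta$-penalized diagonal objective
\begin{equation*}
    w\ \mapsto\ \frac{u(w)}{1-\delta}-\frac{v(w)}{1+\delta}-\frac{\delta}{1-\delta}\Upsilon(w)-\frac{\delta}{1+\delta}\Upsilon(w),
\end{equation*}
which for a fixed $\delta>0$ is strictly weaker than $u(z)-v(z)=\sup_{x}\{u(x)-v(x)\}$: a $\delta$-penalized maximizer need not maximize $u-v$, since the $\Upsilon$-penalty and the $(1\pm\delta)$-scalings can shift it. ``Optimizing the diagonal lower bound over $z$'' only re-proves the $\delta$-penalized variational identity; it does not remove the penalty. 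So as written your proof does not deliver item (b) in the form stated. For what it is worth, the subsequent proof of \Cref{Hd} only invokes the diagonal-limit part of (b), so the weaker conclusion would suffice there, but the equality as transcribed in this lemma is not obtained by your argument and you should either supply the missing step or flag the discrepancy.
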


\subsubsection{The necessary operators for proving comparison principle}
Denote by $C_l^{\infty}(M)$ the set of smooth functions on $M$ that has a lower bound and by $C_u^\infty(M)$ the set of smooth functions on $M$ that has an upper bound. $\Upsilon$ in \eqref{eqn_Upsilon} is a good containment function such that $C_\Upsilon:=\sup_{(x,i)\in M\times S}B_{x,\dd \Upsilon(x)}(i)<\infty$.

We use the multivalued convergent Hamiltonian $H$ to define two new multivalued valued operators for proving the comparison principle in which we include $\Upsilon$ to be able to restrict the analysis to compact sets.
\begin{definition}(Multivalued operators)
\begin{itemize}
\item For $f\in C^{\infty}_l(M)$, $\delta\in(0,1)$ and $\phi\in C^2(M\times S)$. Set
\begin{equation*}
f^{\delta}_1(x):=(1-\delta)f(x)+\delta \Upsilon(x),
\end{equation*}
\begin{equation*}
H^\delta_{1,f,\phi}(x,i):=(1-\delta)H_{f,\phi}(x,i)+\delta C_{\Upsilon},
\end{equation*}
and set
\begin{equation*}
H_1:=\Big\{\left(f^{\delta}_1,H^\delta_{1,f,\phi}\right)\Big | f\in C^{\infty}_l(M),\delta\in(0,1),~\phi\in C^2(M\times S)\Big\}.
\end{equation*}
\item For $f\in C^{\infty}_u(M)$, $\delta\in(0,1)$ and $\phi\in C^2(M\times S)$. Set
\begin{equation*}
f^{\delta}_2(x):=(1+\delta)f(x)-\delta \Upsilon(x),
\end{equation*}
\begin{equation*}
H^\delta_{2,f,\phi}(x,i):=(1+\delta)H_{f,\phi}(x,i)-\delta C_\Upsilon,
\end{equation*}
and set
\begin{equation*}
H_2:=\left\{\left(f^{\delta}_2,H^\delta_{2,f,\phi}\right)\Big | f\in C^{\infty}_u(M),~\delta\in(0,1),~\phi\in C^2(M\times S)\right\}.
\end{equation*}
Noting that $H_{f,\phi}$ in $H_{1,f,\phi}^\delta$ and $H_{2,f,\phi}^\delta$ is obtained in \eqref{eqn_HFPHi}.
\end{itemize}
\end{definition}
We use the single valued Hamiltonian $\mathbf{H}$ to define two new single valued operators. 
\begin{definition}(Single valued operators)
\begin{itemize}
\item For $f\in C^{\infty}_l(M)$ and $\delta \in(0,1)$ set
\begin{equation*}
f^{\delta}_{\dagger}(x):=(1-\delta)f(x)+\delta \Upsilon(x),
\end{equation*}
\begin{equation*}
H^{\delta}_{\dagger,f}(x):=(1-\delta)\mathbf{H}f(x)+\delta C_{\Upsilon},
\end{equation*}
and set
\begin{equation*}
H_{\dagger}:=\left\{\left(f^{\delta}_\dagger,H^{\delta}_{\dagger,f}\right)\Big |f\in C^\infty_l(M),~\delta\in(0,1)\right\}.
\end{equation*}
\item For $f\in C^{\infty}_u(M)$ and $\delta\in(0,1)$ set
\begin{equation*}
f^{\delta}_{\ddagger}(x):=(1+\delta)f(x)-\delta \Upsilon(x),
\end{equation*}
\begin{equation*}
H^{\delta}_{\ddagger,f}(x):=(1+\delta)\mathbf{H}f(x)-\delta C_{\Upsilon},
\end{equation*}
and set
\begin{equation*}
H_{\ddagger}:=\Big\{\left(f^\delta_\ddagger,~H^\delta_{\ddagger,f}\right)\Big |f\in C^\infty_u(M),~\delta\in(0,1)\Big\}.
\end{equation*}
\end{itemize}
Noting that $\mathbf{H}$ in $H_{\dagger,f}^\delta$ and $H_{\ddagger,f}^\delta$ is obtained in \eqref{eqn_Hxp}.
\end{definition}

We collect $H$, $\mathbf{H}$, $H_\dagger$, $H_\ddagger$, $H_1$ and $H_2$ in \Cref{3}, which intuitively provides the proof strategy for the comparison principle in the following subsection.
\begin{figure}[htp]
	\centering
	\begin{tikzpicture}[>=stealth,xscale=0.96,yscale=0.78]
	\node[coordinate] (nw) at (-6.8,2.2) {};
	\node[coordinate] (sw) at (-6.8,-2.2) {};
	\node[coordinate] (se) at (3.5,-2.2) {};
	\node[coordinate] (ne) at (3.5,2.2) {};
	\node[coordinate] (e) at (3.5,0) {};
	\node[coordinate] (w) at (-6.8,0) {};
	\draw[->, very thick]   (-5,0.2)--node[left=1.5mm,above,sloped]{sub}(-3,1);
	\draw[->, very thick]   (-5,-0.2)--node[left=1.5mm,below,sloped]{super}(-3,-1);
	\draw[->, very thick]   (-2.5,1)--node[above=1mm]{sub}(0,1);
	\draw[->, very thick]   (-2.5,-1)--node[below=1mm]{super}(0,-1);
	\draw[<-, very thick]   (0.55,-1)--node[right=1.5mm,below,sloped]{super}(2.5,-0.2);
	\draw[<-, very thick]   (0.55,1)--node[right=1.5mm,above,sloped]{sub}(2.5,0.2);
	\fill[gray, draw=black,rounded corners,fill opacity=0.3] (-0.7,-1.35) rectangle (1.3,1.5);
	\node at (-5.5,0){$\Large \text{ H}$};
 \node at (-6.5,0.28){$ \text{implicit}$};
 \node at (-6.6,-0.3){$ \text{multivalued}$};
 \fill[blue, draw=black,rounded corners,fill opacity=0.3] (-7.6,0.8) rectangle (-5,-0.8);
 \node at (-2.8,1){$\Large \text{ H}_1$};
	\node at (-2.8,-1){$\Large \text{ H}_2$};
	\node at (0.2,-1){$\Large \text{ H}_\ddagger$};
	\node at (0.2,1){$\Large \text{ H}_\dagger$};
	\node at (2.7,0){$\Large \textbf{ H}$};
 \node at (4,0.28){$\text{explicit}$};
  \node at (4.2,-0.3){$ \text{single~valued}$};
  \fill[blue, draw=black,rounded corners,fill opacity=0.3] (2.5,0.8) rectangle (5.3,-0.8);
	\node at (0.3,0){comparison };
	\end{tikzpicture}
	\caption{An arrow connecting an operator $A$ with operator $B$ with subscript `sub’ means that viscosity subsolutions of $f-\lambda Af=h$ are also viscosity subsolutions of $f-\lambda Bf=h$. Similarly, we get the description for arrows with a subscript `super'. The middle gray box around the operators $H_\dagger$ and $H_\ddagger$ indicates that the comparison principle holds for subsolutions of $f-\lambda H_\dagger f=h$ and supersolutions of $f-\lambda H_\ddagger f=h$. The left blue box indicates that $H$ is an implicit and multivalued operator. The right blue box indicates $\mathbf{H}$ is an explicit single valued operator.}
	\label{3}
\end{figure}
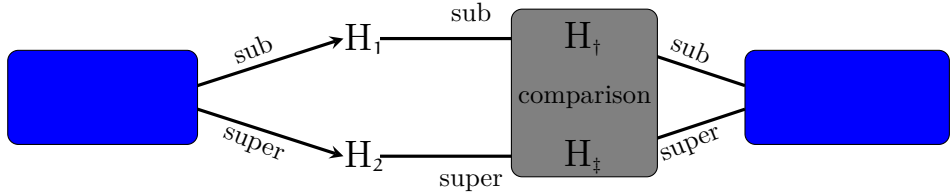
\subsubsection{Main propositions: comparison principle}
Based on the above preparations, we are ready to state the proposition of this subsection.
\begin{proposition}[Comparison principle]\label{pro_compa_prin}
Let Assumptions \ref{ass_b_linear_growth}, \ref{ass_b_one_side}, \ref{asm_conservative_manifold} and \ref{asm_conti_manifold} be satisfied. Let $h_1$, $h_2\in C_b(M)$ and $\lambda>0$. Let $u$ be any subsolution to $f-\lambda H f=h_1$ and let $v$ be any supersolution to $f-\lambda Hf=h_2$. Then we have that
	\begin{equation*}
		\sup_{x\in M}u(x)-v(x)\leq\sup_{x}h_1(x)-h_2(x).
	\end{equation*}
\end{proposition}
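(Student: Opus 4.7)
The plan is to follow the standard Crandall--Ishii comparison strategy, organized around Figure \ref{3}. First I would reduce the multivalued implicit operator $H$ to the single-valued explicit operator $\mathbf{H}$ in two steps on each side. On the subsolution side, passing from $H$ to $H_1$ amounts to perturbing any subsolution $u$ of $f-\lambda Hf=h_1$ by the good containment function $\Upsilon$ of Lemma \ref{lem_fr}: the convex combination $(1-\delta)u + \delta \Upsilon$ is a subsolution of $f-\lambda H_1 f = (1-\delta)h_1 + \delta C_\Upsilon$, the containment term forcing the test-point supremum to live on a compact set. The step $H_1 \to H_\dagger$ uses the principal-eigenvalue identity from Proposition \ref{lem_eigen}: given a test function $f \in C^\infty_l(M)$, one can choose $\phi$ to be an eigenfunction so that $H_{f,\phi}(x,\cdot)$ collapses from a function of $i \in S$ to the scalar $\mathbf{H}f(x)$, showing that any subsolution of $f-\lambda H_1 f=h_1$ is also a subsolution of $f - \lambda H_\dagger f = h_1$. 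A symmetric argument (with the reversed inequality and $\phi$ now picked to achieve the infimum that turns $H_{f,\phi}$ into $\mathbf{H}f$) passes $v$ from a supersolution of $f-\lambda Hf=h_2$ to a supersolution of $f-\lambda H_\ddagger f=h_2$.

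The problem thus reduces to the comparison principle between a subsolution $u$ of $f-\lambda H_\dagger f=h_1$ and a supersolution $v$ of $f-\lambda H_\ddagger f=h_2$. Here I would double the variables: for $\delta\in(0,1)$ and $m>0$, use the functional
\begin{equation*}
\Phi_{\delta,m}(x,y) \;=\; \frac{u(x)}{1-\delta} - \frac{v(y)}{1+\delta} - m\Psi(x,y) - \frac{\delta}{1-\delta}\Upsilon(x) - \frac{\delta}{1+\delta}\Upsilon(y),
\end{equation*}
with $\Psi$ the smooth localized squared distance from Lemma \ref{lem_smooth_function}. Lemma \ref{lem_pro3.7} produces optimizers $(x_{\delta,m},y_{\delta,m})$ lying in a common compact $K$, with $m\Psi(x_{\delta,m},y_{\delta,m})\to 0$, so that for $m$ large we have $d(x_{\delta,m},y_{\delta,m})<\delta_K$ and $\Psi=\tfrac{1}{2}d^2$ there; any accumulation point is diagonal and attains $\sup(u-v)$. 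Testing the subsolution property of $u$ at $x_{\delta,m}$ against $x\mapsto m\Psi(x,y_{\delta,m}) + \frac{\delta}{1-\delta}\Upsilon(x)$ and the supersolution property of $v$ at $y_{\delta,m}$ against $y\mapsto -m\Psi(x_{\delta,m},y) - \frac{\delta}{1+\delta}\Upsilon(y)$, then subtracting, yields
\begin{equation*}
\frac{u(x_{\delta,m})}{1-\delta} - \frac{v(y_{\delta,m})}{1+\delta} \;\leq\; \frac{h_1(x_{\delta,m})}{1-\delta} - \frac{h_2(y_{\delta,m})}{1+\delta} + \lambda \bigl[H^\delta_{\dagger,f_x}(x_{\delta,m}) - H^\delta_{\ddagger,f_y}(y_{\delta,m})\bigr],
\end{equation*}
where $f_x$ and $f_y$ are the above test functions. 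The task is then to show that the bracketed term vanishes as $m\to\infty$ and $\delta\to 0$.

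The main obstacle, and the place where the Riemannian geometry genuinely enters, is the cross-term from the drift: after the subtraction the nontrivial quantity to bound is
\begin{equation*}
\mathrm{d}_x\!\bigl(\tfrac{1}{2}d^2(\cdot,y_{\delta,m})\bigr)(x_{\delta,m})\,b(x_{\delta,m},z) \;-\; \mathrm{d}_y\!\bigl(-\tfrac{1}{2}d^2(x_{\delta,m},\cdot)\bigr)(y_{\delta,m})\,b(y_{\delta,m},z),
\end{equation*}
uniformly in the optimizing $z$. Because $d(x_{\delta,m},y_{\delta,m})<\delta_K\leq i(K)$ for $m$ large, the two points are not in each other's cut locus, Lemma \ref{lem_dddd} applies, and Assumption \ref{ass_b_one_side} bounds this expression by $C_K d^2(x_{\delta,m},y_{\delta,m}) = 2C_K \Psi(x_{\delta,m},y_{\delta,m})$; multiplied by $m$ this is $2C_K \cdot m\Psi(x_{\delta,m},y_{\delta,m})\to 0$ by Lemma \ref{lem_pro3.7}\ref{item_icc}. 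The quadratic term $\tfrac{1}{2}|\mathrm{d}f_x|^2 - \tfrac{1}{2}|\mathrm{d}f_y|^2$ cancels to leading order because Lemma \ref{lem_dddd} gives the two differentials as parallel-transported vectors of equal length, while the Donsker--Varadhan switching contribution $-\mathcal{I}(x,\pi)$ is continuous in $x$ by Assumptions \ref{asm_conservative_manifold}--\ref{asm_conti_manifold} and hence negligible as $x_{\delta,m}$ and $y_{\delta,m}$ coalesce. The containment contributions $\delta C_\Upsilon/(1\mp\delta)$ tend to $0$ as $\delta\to 0$. Sending first $m\to\infty$ and then $\delta\to 0$ leaves $\sup_x(u-v)\leq \sup_x(h_1 - h_2)$, which is the claim.
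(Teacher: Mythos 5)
Your proposal follows essentially the same route as the paper: the chain of operator reductions $H \to H_1/H_2 \to H_\dagger/H_\ddagger$ (Lemmas \ref{lem_h1}, \ref{lem_1d}, \ref{lem_bfd}), doubling variables with the penalization $\Phi_{\delta,m}$ built from the localized smooth squared distance $\Psi$ of Lemma \ref{lem_smooth_function} and the containment function $\Upsilon$ of Lemma \ref{lem_fr}, extraction of optimizers via Lemma \ref{lem_pro3.7}, the subsolution/supersolution inequalities at those optimizers, and finally bounding $\mathcal{H}(x_{\delta,m},p^1_{\delta,m})-\mathcal{H}(y_{\delta,m},p^2_{\delta,m})$ by combining the one-sided Lipschitz Assumption \ref{ass_b_one_side}, the parallel-transport isometry from Lemma \ref{lem_dddd}, and the equi-continuity of $\mathcal{I}(\cdot,\pi)$ from Lemma \ref{lem_Lip}. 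One small imprecision: in the $H\to H_1$ step the paper keeps the subsolution $u$ and the right-hand side $h_1$ unchanged and instead absorbs the $\delta\Upsilon$ and $\delta C_\Upsilon$ perturbations into the operator's \emph{test functions} $(f^\delta_1, H^\delta_{1,f,\phi})$, rather than modifying $u$ into $(1-\delta)u+\delta\Upsilon$ and changing the inhomogeneous term; this is a cosmetic rephrasing and does not affect the rest of your argument.
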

 To prove comparison principle \Cref{pro_compa_prin}, we need the following lemmas on Riemannian manifold such that all the implications in \Cref{3} can be established. Those proof is analogous to the comparison principle in \cite{HKX2023} on the Euclidean space.
\begin{lemma}\label{lem_1d}
Fix $\lambda>0$ and $h\in C_b(M)$.
\begin{enumerate}
\item Every subsolution to $f-\lambda H_1f=h$ is also a subsolution to $f-\lambda H_{\dagger}f=h$.
\item Every supersolution to $f-\lambda H_1f=h$ is also a supersolution to $f-\lambda H_{\ddagger}f=h$.
\end{enumerate}
\end{lemma}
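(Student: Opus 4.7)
The plan is to leverage the principal-eigenvalue representation from \Cref{lem_eigen} in order to choose the auxiliary parameter $\phi$ inside the definition of $H_1$ so that $H_{f,\phi}(x_0,\cdot)$ collapses, at the relevant extremum point, to the scalar $\mathbf{H}f(x_0)$ appearing in $H_\dagger$. Crucially, $H_1$ and $H_\dagger$ share the same family of test functions $f_1^\delta = f_\dagger^\delta = (1-\delta)f + \delta \Upsilon$, so the extremum point transports for free; only the value of the Hamiltonian at that point must be matched.

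\textbf{Part (1).} Let $u$ be a subsolution of $f - \lambda H_1 f = h$, fix $f \in C^\infty_l(M)$ and $\delta \in (0,1)$, and suppose $u - f_\dagger^\delta$ attains its maximum at some $x_0 \in M$. Since $f_\dagger^\delta = f_1^\delta$, the same $x_0$ is a maximum of $u - f_1^\delta$. By \Cref{lem_eigen}, the operator $Q_{x_0, \mathrm{d}f(x_0)} = B_{x_0, \mathrm{d}f(x_0)} + R_{x_0}$ admits a strictly positive principal eigenfunction $\bar\phi:S \to (0,\infty)$ whose eigenvalue is $\mathbf{H}f(x_0)$. Define $\phi \in C^2(M \times S)$ to be constant in the base variable by $\phi(x,i) := \log \bar\phi(i)$. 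Because $\phi$ enters $H_{f,\phi}$ in \eqref{eqn_HFPHi} only through pointwise differences $\phi(x,j)-\phi(x,i)$, direct substitution using $(Q_{x_0,\mathrm{d}f(x_0)}\bar\phi)(i) = \mathbf{H}f(x_0)\,\bar\phi(i)$ yields
\begin{equation*}
H_{f,\phi}(x_0,i) \;=\; B_{x_0,\mathrm{d}f(x_0)}(i) + \frac{(R_{x_0}\bar\phi)(i)}{\bar\phi(i)} \;=\; \mathbf{H}f(x_0) \qquad \text{for every } i \in S.
\end{equation*}
Applying the subsolution inequality for the pair $(f_1^\delta, H_{1,f,\phi}^\delta) \in H_1$ at $x_0$ and inserting this identity gives
\begin{equation*}
u(x_0) - \lambda \left[ (1-\delta)\mathbf{H}f(x_0) + \delta C_\Upsilon \right] \;\leq\; h(x_0),
\end{equation*}
which is precisely $u(x_0) - \lambda H_{\dagger,f}^\delta(x_0) \leq h(x_0)$, the subsolution inequality for $H_\dagger$.

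\textbf{Part (2).} The statement is most naturally read with $H_2$ in place of $H_1$, since $H_2$ and $H_\ddagger$ share the test function $f_2^\delta = f_\ddagger^\delta = (1+\delta)f - \delta \Upsilon$; this is also the flow indicated by \Cref{3}. The argument mirrors Part~(1): at a minimum $x_0$ of $v - f_\ddagger^\delta$, take $\phi(x,i) := \log \bar\phi(i)$ with $\bar\phi$ the principal eigenfunction of $Q_{x_0,\mathrm{d}f(x_0)}$, exploit the same identity $H_{f,\phi}(x_0,i) = \mathbf{H}f(x_0)$, and invoke the supersolution inequality of $v$ for $(f_2^\delta, H_{2,f,\phi}^\delta) \in H_2$ to conclude $v(x_0) - \lambda H_{\ddagger,f}^\delta(x_0) \geq h(x_0)$.

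\textbf{Main obstacle.} All the substantive work has been absorbed into \Cref{lem_eigen}; what remains is bookkeeping. The two points worth double-checking are (a) admissibility of $\phi$ chosen constant in $x$, which is immediate because only the pointwise values $\phi(x_0, \cdot)$ enter the formula \eqref{eqn_HFPHi}, and (b) the sign and normalization convention identifying the principal eigenvalue of $Q_{x_0, \mathrm{d}f(x_0)}$ with $\mathbf{H}f(x_0)$, as provided by the representation \eqref{eqn_Hxp}. Neither point introduces any real difficulty; the entire content of the lemma is the algebraic match between the multivalued Hamiltonian at one special choice of $\phi$ and the single-valued Hamiltonian $\mathbf{H}$.
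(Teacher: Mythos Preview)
Your approach is correct and is precisely the bootstrapping argument the paper defers to \cite{HKX2023} for: choose the eigenfunction $\bar\phi$ of $Q_{x_0,\mathrm{d}f(x_0)}$ at the extremum so that the multivalued image collapses to $\mathbf{H}f(x_0)$, exploiting that $f_1^\delta = f_\dagger^\delta$ share the same test function. Your identification of the typo in part (b) (it should read $H_2$, as \Cref{3} and \Cref{4} make clear) is also right.

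One technicality worth making explicit: when you write ``applying the subsolution inequality for the pair $(f_1^\delta, H_{1,f,\phi}^\delta)$ at $x_0$'', the viscosity subsolution property for $H_1$ only guarantees the inequality at \emph{some} optimizing sequence, not at the particular maximizer $x_0$ you used to define $\phi$. If $u - f_1^\delta$ has several maximizers, the $H_1$-subsolution inequality could in principle fire at a different point $x_0' \neq x_0$, where $H_{f,\phi}(x_0',\cdot)$ no longer equals $\mathbf{H}f(x_0')$. The standard fix is to add a small strictly convex penalization centered at $x_0$ (e.g.\ replace $f$ by $f + \varepsilon\, d^2(\cdot,x_0)/(1-\delta)$, which still lies in $C_l^\infty(M)$), forcing $x_0$ to be the unique maximizer, and then send $\varepsilon \downarrow 0$ using continuity of $\mathcal{H}$. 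This is routine and does not change the structure of your argument, but since you flagged ``bookkeeping'' as the only remaining work, it is worth noting that this particular piece of bookkeeping is where a careless write-up can go wrong.
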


\begin{lemma}\label{lem_bfd}
Fix $\lambda>0$ and $h\in C_b(M)$. 
\begin{enumerate}
\item Every subsolution to $f-\lambda \mathbf{H}f=h$ is also a subsolution to $f-\lambda H_\dagger f=h$.
\item Every supersolution to $f-\lambda \mathbf{H}f=h$ is also a supersolution to $f-\lambda H_{\ddagger} f=h$.	
\end{enumerate}
\end{lemma}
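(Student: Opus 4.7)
The plan is to exploit convexity of the Hamiltonian $\mathcal{H}(x,\cdot)$ in its covector argument, together with the pointwise bound $\mathbf{H}\Upsilon(x) \leq C_\Upsilon$ that follows from the good containment property. From the representation \eqref{eqn_Hxp}, $B_{x,p}(z) = b(x,z)p + \tfrac{1}{2}|p|^2$ is quadratic and therefore convex in $p$; since $\mathcal{I}(x,\pi)$ is independent of $p$ and a supremum of convex functions is convex, $p \mapsto \mathcal{H}(x,p)$ is convex for every fixed $x$. The bound $\mathbf{H}\Upsilon \leq C_\Upsilon$ is then immediate from $\mathcal{I}(x,\pi) \geq 0$ and $\sup_{(x,i)\in M\times S} B_{x,\dd\Upsilon(x)}(i) = C_\Upsilon$.

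For part (1), let $u$ be a viscosity subsolution of $f - \lambda\mathbf{H}f = h$, fix a test pair $(f^\delta_\dagger, H^\delta_{\dagger,f})$ with $f\in C^\infty_l(M)$ and $\delta\in(0,1)$, and suppose $u - f^\delta_\dagger$ attains its maximum at $x_0 \in M$. Since $f^\delta_\dagger$ is smooth and, via $\Upsilon$, carries the regularity required by $\mathcal{D}(\mathbf{H})$, it is an admissible test function for $\mathbf{H}$, so the subsolution property yields $u(x_0) - \lambda \mathbf{H}f^\delta_\dagger(x_0) \leq h(x_0)$. Convexity applied to $\dd f^\delta_\dagger(x_0) = (1-\delta)\dd f(x_0) + \delta\, \dd\Upsilon(x_0)$ gives
\begin{equation*}
	\mathbf{H}f^\delta_\dagger(x_0) \leq (1-\delta)\mathbf{H}f(x_0) + \delta\, \mathbf{H}\Upsilon(x_0) \leq (1-\delta)\mathbf{H}f(x_0) + \delta C_\Upsilon = H^\delta_{\dagger, f}(x_0).
\end{equation*}
Since $\lambda > 0$ this implies $u(x_0) - \lambda H^\delta_{\dagger, f}(x_0) \leq u(x_0) - \lambda \mathbf{H}f^\delta_\dagger(x_0) \leq h(x_0)$, which is the required subsolution inequality.

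For part (2), the argument is symmetric once the convex combination is rewritten. Given a supersolution $v$ of $f - \lambda\mathbf{H}f = h$ and a test pair $(f^\delta_\ddagger, H^\delta_{\ddagger,f})$, suppose $v - f^\delta_\ddagger$ attains its minimum at $x_0$, so $v(x_0) - \lambda \mathbf{H}f^\delta_\ddagger(x_0) \geq h(x_0)$. The identity $f = \frac{1}{1+\delta}f^\delta_\ddagger + \frac{\delta}{1+\delta}\Upsilon$ exhibits $f$ as a convex combination, and applying convexity of $\mathcal{H}(x_0,\cdot)$ gives $\mathbf{H}f(x_0) \leq \frac{1}{1+\delta}\mathbf{H}f^\delta_\ddagger(x_0) + \frac{\delta}{1+\delta}C_\Upsilon$. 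Rearranging produces $H^\delta_{\ddagger, f}(x_0) = (1+\delta)\mathbf{H}f(x_0) - \delta C_\Upsilon \leq \mathbf{H}f^\delta_\ddagger(x_0)$, and hence $v(x_0) - \lambda H^\delta_{\ddagger, f}(x_0) \geq v(x_0) - \lambda \mathbf{H}f^\delta_\ddagger(x_0) \geq h(x_0)$.

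The argument is essentially algebraic and does not invoke the Riemannian geometry of $M$ beyond what is already absorbed into $\mathbf{H}$ and $\Upsilon$. The only technical obstacle I anticipate is confirming that $f^\delta_\dagger$ and $f^\delta_\ddagger$ are bona fide admissible test functions for $\mathbf{H}$; because $f$ is only one-sidedly bounded, this relies on $\Upsilon$ supplying the missing boundedness of $-\mathbf{H}(\cdot)$ required by \Cref{ass_bfH_bounded}. This is precisely the reason for including $\Upsilon$ in the definitions of $H_\dagger$ and $H_\ddagger$, and the verification should proceed along the Euclidean template of \cite{HKX2023}.
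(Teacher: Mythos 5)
The core of your argument — convexity of $p\mapsto\mathcal{H}(x,p)$ combined with the pointwise bound $\mathbf{H}\Upsilon\leq C_\Upsilon$, and the symmetric rewriting $f=\tfrac{1}{1+\delta}f^\delta_\ddagger+\tfrac{\delta}{1+\delta}\Upsilon$ for the supersolution case — is exactly the right mechanism, and the algebra leading to $\mathbf{H}f^\delta_\dagger\leq H^\delta_{\dagger,f}$ and $H^\delta_{\ddagger,f}\leq\mathbf{H}f^\delta_\ddagger$ at the extremal point is correct. The paper deliberately leaves this proof out, pointing to \cite{HKX2023} for the Euclidean template, and that template is precisely the convexity-transfer you wrote down.

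The genuine gap is your admissibility claim, and your description of how $\Upsilon$ enters is backwards. By \Cref{ass_bfH_bounded}, $\mathcal{D}(\mathbf{H})\subseteq C^1_b(M)$, so test functions for the $\mathbf{H}$-equation must be \emph{bounded}. But $f^\delta_\dagger=(1-\delta)f+\delta\Upsilon$ is unbounded precisely because $\Upsilon$ is unbounded (it grows like $\log r$). So $\Upsilon$ is not ``supplying the missing boundedness'' — it is the source of the unboundedness, and its job is rather to give $f^\delta_\dagger$ compact sublevel sets so that $u-f^\delta_\dagger$ actually attains its supremum at some $x_0$. Having found $x_0$, you cannot directly invoke the subsolution property of $u$ against $f^\delta_\dagger$ because $(f^\delta_\dagger,\mathbf{H}f^\delta_\dagger)\notin\mathbf{H}$; you need a localization step. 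The standard fix is to replace $f^\delta_\dagger$ by a bounded $g\in\mathcal{D}(\mathbf{H})$ that agrees with $f^\delta_\dagger$ on a neighborhood of $x_0$ and dominates $f^\delta_\dagger$ outside a compact set (so that $u-g$ still attains its maximum at $x_0$ and $\dd g(x_0)=\dd f^\delta_\dagger(x_0)$); applying the $\mathbf{H}$-subsolution inequality to $g$ and then using $\mathbf{H}g(x_0)=\mathbf{H}f^\delta_\dagger(x_0)$ closes the argument. The same truncation, applied with a function that lies \emph{below} $f^\delta_\ddagger$ away from $x_0$, is needed for the supersolution half. Without this step the proof is incomplete; with it, your convexity computation finishes the job.
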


\begin{lemma}\label{lem_h1}
Fix $\lambda >0$ and $h\in C_b(M)$.
\begin{enumerate}
\item Every subsolution to $f-\lambda Hf=h$ is also a subsolution to $f-\lambda H_1f=h$.
\item  Every supersolution to $f-\lambda Hf=h$ is also a supersolution to $f-\lambda H_2f=h$.
\end{enumerate}
\end{lemma}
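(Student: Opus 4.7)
The plan is to transfer the viscosity sub/super-solution property of $u$ from $H$ to $H_1$ (respectively $H_2$) by exploiting convexity of $p \mapsto \tfrac{1}{2}|p|^2$, linearity of $p \mapsto b(x,i)\cdot p$, and the defining containment inequality $B_{x,\mathrm{d}\Upsilon(x)}(i) \leq C_\Upsilon$ baked into the definition of the good containment function.

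For part (a), I would fix a test pair $(f_1^\delta, H_{1,f,\phi}^\delta) \in H_1$ with $f \in C_l^\infty(M)$, $\delta \in (0,1)$, $\phi \in C^2(M\times S)$, and take a point $x_0$ at which $u - f_1^\delta$ attains its supremum; such a point exists because $f$ is bounded below and $\Upsilon$ has compact sublevels, so $f_1^\delta = (1-\delta)f+\delta\Upsilon$ is coercive. Using $f_1^\delta$ itself as a test function against the multivalued $H$ would give $u(x_0) - \lambda H_{f_1^\delta,\phi}(x_0,i_0) \leq h(x_0)$ for an appropriate $i_0 \in S$. The crux is then a pointwise comparison of Hamiltonians at $(x_0,i_0)$: the switching term $\sum_{j}q_{ij}(x_0)(e^{\phi(x_0,j)-\phi(x_0,i_0)}-1)$ is common to both operators; the drift term decomposes linearly as $b(x_0,i_0)\mathrm{d}f_1^\delta(x_0) = (1-\delta)b(x_0,i_0)\mathrm{d}f(x_0) + \delta b(x_0,i_0)\mathrm{d}\Upsilon(x_0)$; and convexity gives
\begin{equation*}
\tfrac{1}{2}\bigl|(1-\delta)\mathrm{d}f(x_0) + \delta \mathrm{d}\Upsilon(x_0)\bigr|^2 \leq (1-\delta)\tfrac{1}{2}|\mathrm{d}f(x_0)|^2 + \delta \tfrac{1}{2}|\mathrm{d}\Upsilon(x_0)|^2.
\end{equation*}
Combining these with $b(x_0,i_0)\mathrm{d}\Upsilon(x_0) + \tfrac{1}{2}|\mathrm{d}\Upsilon(x_0)|^2 \leq C_\Upsilon$ produces $H_{f_1^\delta,\phi}(x_0,i_0) \leq H_{1,f,\phi}^\delta(x_0,i_0)$, from which $u(x_0) - \lambda H_{1,f,\phi}^\delta(x_0,i_0) \leq h(x_0)$ follows immediately.

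For part (b) I would run the mirror argument with $f_2^\delta = (1+\delta)f - \delta \Upsilon$ and $f \in C_u^\infty(M)$, taking an infimum point of $u - f_2^\delta$ (which exists because $f_2^\delta \to -\infty$ at infinity thanks to $\Upsilon$ and the boundedness of $f$ above). The only new algebra is the identity
\begin{equation*}
\tfrac{1}{2}\bigl|(1+\delta)\mathrm{d}f - \delta \mathrm{d}\Upsilon\bigr|^2 - (1+\delta)\tfrac{1}{2}|\mathrm{d}f|^2 + \delta \tfrac{1}{2}|\mathrm{d}\Upsilon|^2 = \tfrac{1}{2}\delta(1+\delta)\bigl|\mathrm{d}f - \mathrm{d}\Upsilon\bigr|^2 \geq 0,
\end{equation*}
which, together with $-\delta b(x_0,i_0)\mathrm{d}\Upsilon(x_0) + \delta C_\Upsilon \geq \delta \cdot \tfrac{1}{2}|\mathrm{d}\Upsilon(x_0)|^2$ extracted from the containment bound, reverses every inequality and yields $H_{f_2^\delta,\phi}(x_0,i_0) \geq H_{2,f,\phi}^\delta(x_0,i_0)$. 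Hence $u$ is a supersolution of $f - \lambda H_2 f = h$.

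The main technical subtlety I expect is the domain question: the multivalued $H$ in \eqref{eqn_multivale_H} nominally requires test functions in $C_b(M)$, whereas $f_1^\delta, f_2^\delta$ contain the unbounded $\Upsilon$. I would reconcile this either by appealing to the standard enlargement of the test-function class in the Feng--Kurtz viscosity framework — only attainment of optima, which $\Upsilon$ already enforces, is essential — or by truncating $\Upsilon$ at a level $R$, applying the argument above in the bounded case, and sending $R\to\infty$ while using the compact-containment property to confine the optimizers. Past this point the manipulations are a direct transcription of the Euclidean argument in \cite{HKX2023}, so I would refer to that work for any remaining routine verification.
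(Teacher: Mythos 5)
Your overall plan — use $f_1^\delta$ (resp.\ $f_2^\delta$) as a test function against $H$, take the optimizer $x_0$ that exists thanks to the coercivity of $\delta\Upsilon$, and then compare Hamiltonians pointwise using convexity of $p\mapsto B_{x,p}(i)$ and the containment bound $B_{x,\mathrm{d}\Upsilon(x)}(i)\le C_\Upsilon$ — is the right idea, and the computations you give for the $B$-part are correct in both directions. But there is a genuine error in the switching term. You assert that the switching term is ``common to both operators,'' which is false: by definition $H^\delta_{1,f,\phi}(x,i) = (1-\delta)H_{f,\phi}(x,i)+\delta C_\Upsilon$, so its switching part carries a factor $(1-\delta)$, whereas $H_{f_1^\delta,\phi}(x,i)$ carries the switching part with coefficient $1$. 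The desired pointwise bound $H_{f_1^\delta,\phi}(x_0,i_0)\le H^\delta_{1,f,\phi}(x_0,i_0)$ would therefore force $\delta\sum_j q_{i_0j}(x_0)\bigl(e^{\phi(x_0,j)-\phi(x_0,i_0)}-1\bigr)\le 0$, i.e.\ nonpositivity of the switching term at $(x_0,i_0)$. That has no reason to hold: the sign depends on $\phi$ and on which state $i_0$ the subsolution property hands you, and you do not get to choose $i_0$.

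The fix is to change the test $\phi$ when you invoke the subsolution property for $H$: test against $(f_1^\delta, H_{f_1^\delta,(1-\delta)\phi})$ rather than $(f_1^\delta, H_{f_1^\delta,\phi})$. Convexity of $r\mapsto e^{r}-1$ gives $e^{(1-\delta)a}-1 \le (1-\delta)(e^{a}-1)$ for every $a\in\mathbb{R}$, and since $q_{ij}(x)\ge 0$ for $j\neq i$ and the $j=i$ term vanishes, this yields
\begin{equation*}
\sum_{j\in S} q_{ij}(x)\bigl[e^{(1-\delta)(\phi(x,j)-\phi(x,i))}-1\bigr] \;\le\; (1-\delta)\sum_{j\in S} q_{ij}(x)\bigl[e^{\phi(x,j)-\phi(x,i)}-1\bigr]
\end{equation*}
for all $(x,i)$, so together with your $B$-estimate one gets $H_{f_1^\delta,(1-\delta)\phi}\le H^\delta_{1,f,\phi}$ pointwise. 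For part (b) the analogous choice is $(1+\delta)\phi$, using $e^{(1+\delta)a}-1\ge (1+\delta)(e^{a}-1)$; then $H_{f_2^\delta,(1+\delta)\phi}\ge H^\delta_{2,f,\phi}$ follows from your supersolution-direction $B$-estimate. Your remaining concern — that $f_1^\delta,f_2^\delta$ are unbounded and so not literally in the $C_b(M)$ domain of $H$ — is a real but secondary technicality, and the truncation/enlarged-domain approaches you suggest are reasonable ways to close it.
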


In addition to the lemmas above,  we still need to verify the comparison principle for $f-\lambda H_\dagger f=h_1$ and $f-\lambda H_\ddagger f=h_2$ on $M$ from \Cref{3}.

\begin{lemma}\label{Hd}
Assumptions \ref{assu_varn_manifold}, \ref{ass_b_one_side}, \ref{asm_conservative_manifold} and \ref{asm_conti_manifold} hold. Let $h_1$, $h_2\in C_b(M)$ and $\lambda>0$. Let $u$ be any subsolution to $f-\lambda H_{\dagger}f=h_1$ and let $v$ be any supersolution to $f-\lambda H_{\ddagger}f=h_2$. Then we have
\begin{equation}\label{eqn:results_comparison_principle}
\sup_{x\in M}u(x)-v(x)\leq\sup_{x\in M}h_1(x)-h_2(x).
\end{equation}
\end{lemma}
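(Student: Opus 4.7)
The plan is to apply the classical doubling-of-variables technique of viscosity-solution theory, with the smooth penalization $\Psi$ of Lemma~\ref{lem_smooth_function} in place of $\frac{1}{2}d^2$ and the good containment function $\Upsilon$ of Lemma~\ref{lem_fr} to force optimizers into a compact set. Concretely, for each $\delta\in(0,1)$ and $m>0$ I invoke Lemma~\ref{lem_pro3.7} to produce points $x_{\delta,m},y_{\delta,m}\in M$ realizing the supremum in \eqref{eqn:Text_function}. Setting
\[
f_m(x):=m\Psi(x,y_{\delta,m})\in C^\infty_l(M),\qquad \tilde f_m(y):=-m\Psi(x_{\delta,m},y)\in C^\infty_u(M),
\]
the maximum property yields, after factoring out $(1-\delta)$ and $(1+\delta)$, that $u-f^\delta_{\dagger}$ attains a maximum at $x_{\delta,m}$ with $(f^\delta_{\dagger},H^\delta_{\dagger,f_m})\in H_\dagger$ and $v-f^\delta_{\ddagger}$ attains a minimum at $y_{\delta,m}$ with $(f^\delta_{\ddagger},H^\delta_{\ddagger,\tilde f_m})\in H_\ddagger$. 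Subtracting the corresponding sub/supersolution inequalities reduces the proof to bounding the Hamiltonian mismatch
\[
\Delta_{\delta,m}:=(1-\delta)\mathcal{H}(x_{\delta,m},p_{\delta,m})-(1+\delta)\mathcal{H}(y_{\delta,m},q_{\delta,m})+2\delta C_\Upsilon,
\]
with $p_{\delta,m}:=m\,\mathrm{d}_x\Psi(x_{\delta,m},y_{\delta,m})$ and $q_{\delta,m}:=-m\,\mathrm{d}_y\Psi(x_{\delta,m},y_{\delta,m})$, by a quantity that vanishes after sending first $m\to\infty$ and then $\delta\to 0$.

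To control $\Delta_{\delta,m}$ I decompose $\mathcal{H}(x,p)=\tfrac{1}{2}|p|^2+F(x,p)$ with $F(x,p)=\sup_{\pi}\{\int b(x,z)\cdot p\,\pi(\mathrm{d}z)-\mathcal{I}(x,\pi)\}$. Once $d(x_{\delta,m},y_{\delta,m})$ is small enough that $\Psi=\tfrac{1}{2}d^2$ at the evaluation points (guaranteed eventually by Lemma~\ref{lem_pro3.7} together with Lemma~\ref{lem_smooth_function}), Lemma~\ref{lem_dddd} shows that parallel transport along the minimising geodesic sends $p_{\delta,m}$ to $q_{\delta,m}$; since this transport is a linear isometry one gets $|p_{\delta,m}|_{g(x_{\delta,m})}=|q_{\delta,m}|_{g(y_{\delta,m})}$, so the quadratic contribution reduces to $-\delta|p_{\delta,m}|^2\leq 0$. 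For the drift–DV part, I pick a near-maximiser $\pi^\ast$ for $F(x_{\delta,m},p_{\delta,m})$ and use it as a test measure in the supremum defining $F(y_{\delta,m},q_{\delta,m})$; the pointwise-in-$i$ one-sided Lipschitz hypothesis (Assumption~\ref{ass_b_one_side}), integrated against $\pi^\ast$, gives
\[
\int_M \bigl[b(x_{\delta,m},z)p_{\delta,m}-b(y_{\delta,m},z)q_{\delta,m}\bigr]\pi^\ast(\mathrm{d}z)\leq 2mC_K\Psi(x_{\delta,m},y_{\delta,m}),
\]
while the Lipschitz continuity of $q_{ij}$ (Assumption~\ref{asm_conti_manifold}) on the relatively compact set provided by Lemma~\ref{lem_pro3.7} controls $|\mathcal{I}(x_{\delta,m},\pi^\ast)-\mathcal{I}(y_{\delta,m},\pi^\ast)|\leq C'_K d(x_{\delta,m},y_{\delta,m})$. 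The $(1\pm\delta)$ factors produce an extra $O(\delta)$ term that is absorbed together with $2\lambda\delta C_\Upsilon$ later.

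Sending $m\to\infty$ first, Lemma~\ref{lem_pro3.7} forces $m\Psi(x_{\delta,m},y_{\delta,m})\to 0$ and $d(x_{\delta,m},y_{\delta,m})\to 0$, so all the above error terms disappear; along subsequences the pair $(x_{\delta,m},y_{\delta,m})$ converges to a diagonal limit $(z_\delta,z_\delta)$ at which, up to the discount factors, $u-v$ nearly attains $\sup_x(u-v)$. Feeding this back into the subtracted inequality yields
\[
\sup_{x\in M}\bigl(u(x)-v(x)\bigr)\leq\sup_{x\in M}\bigl(h_1(x)-h_2(x)\bigr)+O(\delta),
\]
and the limit $\delta\downarrow 0$ delivers \eqref{eqn:results_comparison_principle}. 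The main obstacle is the manifold-specific one: justifying the parallel-transport identity $\tau_{x,y}p_{\delta,m}=q_{\delta,m}$ even though the test function is the smoothly truncated $\Psi$ rather than $\frac{1}{2}d^2$. This is handled by first using $\Upsilon$ to confine $(x_{\delta,m},y_{\delta,m})$ to a compact set $K$ with injectivity radius bounded below, then exploiting $m\Psi(x_{\delta,m},y_{\delta,m})\to 0$ to push $d(x_{\delta,m},y_{\delta,m})$ below the threshold $\delta_K$ of Lemma~\ref{lem_smooth_function}, on which $\Psi$ and $\tfrac{1}{2}d^2$ coincide and Lemma~\ref{lem_dddd} applies.
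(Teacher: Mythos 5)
Your proposal is correct and follows essentially the same route as the paper: the doubling-of-variables via Lemma~\ref{lem_pro3.7} with the smooth truncated penalization $\Psi$ of Lemma~\ref{lem_smooth_function}, compactification through $\Upsilon$, parallel transport (Lemma~\ref{lem_dddd}) to make the quadratic $\tfrac12|p|^2$ terms match, Assumption~\ref{ass_b_one_side} integrated against an optimizer $\pi^\ast$ to kill the drift mismatch, and equi-/Lipschitz-continuity of $\mathcal{I}(\cdot,\pi)$ (the paper's Lemma~\ref{lem_Lip}, itself built on Assumption~\ref{asm_conti_manifold}) for the Donsker--Varadhan part. The only stylistic deviation is that you keep the $(1\pm\delta)$ weights attached to the Hamiltonians and obtain $-\delta|p_{\delta,m}|^2\leq 0$ from the quadratic part, whereas the paper first divides the sub/super inequalities by $1\mp\delta$ and then shows the quadratic terms cancel exactly; the two are equivalent.
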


\begin{proof} 
    For a sub and supersolution $u$ and $v$, $\delta \in (0,1)$ and $m \geq 1$, we follow \eqref{eqn:Text_function} and set
 \begin{equation}\label{eqn:Pdelta_m}
\Phi_{\delta,m}(x,y):=\frac{u(x)}{1-\delta}-\frac{v(y)}{1+\delta}-m\Psi(x,y)-\frac{\delta}{1-\delta}\Upsilon(x)-\frac{\delta}{1+\delta}\Upsilon(y),
\end{equation}
where $\Psi(\cdot,\cdot)$ is a smooth function we have given in \Cref{lem_smooth_function}. Since $\Upsilon$ has compact level sets, there exists $(x_{\delta,m},y_{\delta,m})\in M\times M$ satisfying 
\begin{equation}\label{eqn_Phi_sup}
\Phi_{\delta,m}(x_{\delta,m},~y_{\delta,m})=\sup_{(x,y)\in M\times M}\Phi_{\delta,m}(x,~y).
\end{equation}
In view of \eqref{eqn_Phi_sup}, it follows that $x_{\delta,m}$ is the unique maximizing point for
\begin{align*}
\sup_{x\in M}u(x)-\varphi_1^{\delta,m}(x)=u(x_{\delta,m})-\varphi_1^{\delta,m}(x_{\delta,m})
\end{align*}
where $\varphi_1^{\delta,m}$ is constructed by taking the appropriate remaining terms of \eqref{eqn:Pdelta_m}, with an additional penalization $(1-\delta)d^2(x,x_{\delta,m})$ to turn $x_{\delta,m}$ into the unique optimizer:
\begin{align*}
\varphi_1^{\delta,m}(x):&=-(1-\delta)\Phi_{\delta,m}(x,y_{\delta,m})+u(x)+(1-\delta)d^2(x,x_{\delta,m})\\
&=(1-\delta)\left(-\frac{u(x)}{1-\delta}+\frac{v(y_{\delta,m})}{1+\delta}+m\Psi(x,y_{\delta,m})+\frac{\delta}{1-\delta}\Upsilon(x)+\frac{\delta}{1+\delta}\Upsilon(y_{\delta,m})\right)\\
&\quad+u(x)+(1-\delta)d^2(x,x_{\delta,m})\\
&=(1-\delta)\left(\frac{v(y_{\delta,m})}{1+\delta}+m\Psi(x,y_{\delta,m})+\frac{\delta}{1-\delta}\Upsilon(x)+\frac{\delta}{1+\delta}\Upsilon(y_{\delta,m})\right)\\
&\quad+(1-\delta)d^2(x,x_{\delta,m})\\
&=(1-\delta)\bigg(m\Psi(x,y_{\delta,m})+d^2(x,x_{\delta,m})+\frac{\delta}{1+\delta}\Upsilon(y_{\delta,m})+\frac{v(y_{\delta,m})}{1+\delta}\bigg)+\delta\Upsilon(x).
\end{align*}
Since $u$ is a viscosity subsolution of $f-\lambda H_{\dagger}f=h_1$ we conclude that
\begin{equation}\label{eqn_sub_inequality}
u(x_{\delta,m})-\lambda\left[(1-\delta)\mathcal{H}(x_{\delta,m},p^1_{\delta,m})+\delta C_{\Upsilon}\right]\leq h_1(x_{\delta,m}),
\end{equation}
where
\begin{equation}\label{eqn_P1} p^1_{\delta,m}:=md_x\Psi(\cdot,y_{\delta,m})(x_{\delta,m})=m~ \mathrm{d}_{x}\left(\frac{1}{2} d^2(\cdot,y_{\delta,m})\right)(x_{\delta,m}).
\end{equation}
Similarly, we obtain that that $y_{\delta,m}$ it the unique optimizer for
\begin{equation*}
\inf_{y\in M}v(x)-\varphi_2^{\delta,m}(y)=v(y_{\delta,m})-\varphi_2^{\delta,m}(y_{\delta,m}),
\end{equation*}
where
\begin{align*}
\varphi^{\delta,m}_2(y):&=(1+\delta)\left(-m\Psi(x_{\delta,m},y)-d^2(y,y_{\delta,m})-\frac{\delta}{1-\delta}\Upsilon(x_{\delta,m})+\frac{u(x_{\delta,m})}{1-\delta}\right)-\delta\Upsilon(y).
\end{align*}
As $v$ is a viscosity supersolution of $f-\lambda H_{\ddagger}f=h_2$, we know then that
\begin{equation}\label{eqn_super_inequality}
v(x_{\delta,m})-\lambda\left[(1+\delta)\mathcal{H}(y_{\delta,m},p^2 _{\delta,m})-\delta C_{\Upsilon}\right]\geq h_2(y_{\delta,m}),
\end{equation}
where
\begin{equation}\label{eqn_P2}
p^2_{\delta,m}:=-m~\mathrm{d}_y \left(\frac{1}{2}d^2(x_{\delta,m},\cdot)\right)(y_{\delta,m}).
\end{equation}
By \cref{item_icc} of \Cref{lem_pro3.7}, we have
\begin{equation}\label{eq_md}
\lim_{m\to\infty}m\Psi(x_{\delta,m},y_{\delta,m})=0.
\end{equation}
Taking \eqref{eqn_sub_inequality}, \eqref{eqn_super_inequality} and \eqref{eq_md} into account, we obtain that
\begin{align}
\sup_{x\in M}u(x)-v(x)&\leq \liminf_{\delta\to0}\liminf_{m\to\infty}\left(\frac{u(x_{\delta,m})}{1-\delta}-\frac{v(y_{\delta,m})}{1+\delta}\right)\notag\\
&\leq \liminf_{\delta\to0}\liminf_{m\to\infty}\big\{\frac{h_1(x_{\delta,m})}{1-\delta}-\frac{h_2(y_{\delta,m})}{1+\delta}\label{eqn_k1}\\
&\quad+\frac{\delta}{1-\delta}C_{\Upsilon}+\frac{\delta}{1+\delta}C_{\Upsilon}\label{eqn_k2}\\
&\quad+ \lambda\big(\mathcal{H}(x_{\delta,m},p^1_{\delta,m})-\mathcal{H}(y_{\delta,m},p^2_{\delta,m})\big)\big\},
\end{align}
where in the first inequality we use \eqref{eqn_Phi_sup} and  drop the nonnegative functions  $d^2(\cdot,\cdot)$ and $\Upsilon(\cdot)$.
\par
The term \eqref{eqn_k2} vanishes as $\delta \to 0$.
For the term \eqref{eqn_k1}, the sequence $(x_{\delta,m},y_{\delta,m})$ takes its values in a compact set and, hence, admits converging subsequences as $m\to \infty$. 
By (b) of \Cref{lem_pro3.7}, these subsequences converge to points of the form $(x,x)$. 
Hence, by the above analysis, we get 
\begin{align*}
	\sup_{x\in M}u(x)-v(x)&\leq \lambda\liminf_{\delta\to0}\liminf_{m\to\infty} \big(\mathcal{H}(x_{\delta,m},p^1_{\delta,m})-\mathcal{H}(y_{\delta,m},p^2_{\delta,m})\big)\\
	&\quad+\sup_{x\in M}h_1(x)-h_2(x).
\end{align*}
It follows that the comparison principle holds for $f-\lambda H_\dagger f=h_1$ and $f-\lambda H_\ddagger f=h_2$ whenever for any $\delta>0$
\begin{equation}\label{eqn_H-H}
 	\liminf_{m\to\infty} \big(\mathcal{H}(x_{\delta,m},p^1_{\delta,m})-\mathcal{H}(y_{\delta,m},p^2_{\delta,m})\big)\leq0.
\end{equation}
\par
To that end, recall $\mathcal{H}(x,\mathrm{d}f(x))$ in \eqref{eqn_Hxp}:
\begin{equation*}
\mathcal{H}(x,\mathrm{d}f(x))=\sup_{\pi\in\mathcal{P}(S)}\big\{\int_M B_{x,\dd f(x)}(z)\pi(\text{d}z)-\mathcal{I}(x,\pi)\big\},
\end{equation*}
where $\pi\mapsto \int_M B_{x,\dd f(x)}(z)\pi(\mathrm{d}z)$ is bounded and continuous, and $\mathcal{I}(x,\cdot)$ has compact sub-level sets in $\mathcal{P}(S)$. Thus, there exists an optimizer $\pi_{\delta,m}\in\mathcal{P}(S)$ such that 
\begin{equation}\label{eqn_H1}
	\mathcal{H}(x_{\delta,m},p^1_{\delta,m})=\int_M B_{x_{\delta,m},p^1_{\delta,m}}(z)\pi_{\delta,m}(\text{d}z)-\mathcal{I}(x_{\delta,m},\pi_{\delta,m})
\end{equation}
and 
\begin{equation}\label{eqn_H2}
	\mathcal{H}(y_{\delta,m},p^2_{\delta,m})\leq \int_M B_{y_{\delta,m},p^2_{\delta,m}}(z)\pi_{\delta,m}(\text{d}z)-\mathcal{I}(y_{\delta,m},\pi_{\delta,m}).
\end{equation}
Combining \eqref{eqn_H1} and \eqref{eqn_H2}, we obtain 
\begin{align}
	\mathcal{H}&(x_{\delta,m},p^1_{\delta,m})-\mathcal{H}(y_{\delta,m},p^2_{\delta,m})\notag\\
	&\leq \int_M\big(B_{x_{\delta,m},p^1_{\delta,m}}(z)-B_{y_{\delta,m},p^2_{\delta,m}}(z)\big)\pi_{\delta,m}(\text{d}z)\label{eqn_B_B}\\
	&\quad+\mathcal{I}(y_{\delta,m},\pi_{\delta,m})-\mathcal{I}(x_{\delta,m},\pi_{\delta,m})\label{eqn_I-I}.
\end{align}
It is enough to prove that \eqref{eqn_B_B} and \eqref{eqn_I-I} are sufficiently small. For \eqref{eqn_B_B}, by calculating the difference of integrand  $B_{x,p}$ in detail, for any $z \in S$, from \eqref{eqn_BXI}, \eqref{eqn_P1} and \eqref{eqn_P2} one has
\begin{equation}\label{eqn:B-B}
\begin{split}
&B_{x_{\delta,m},p^1_{\delta,m}}(z)-B_{y_{\delta,m},p^2_{\delta,m}}(z)\\&
=m\mathrm{d}_{x}\left(\frac{1}{2} d^2(\cdot,y_{\delta,m})\right)(x_{\delta,m})b(x_{\delta,m},z)+\frac{1}{2}\left| m~ \mathrm{d}_{x}\left(\frac{1}{2} d^2(\cdot,y_{\delta,m})\right)(x_{\delta,m})  \right|^2\\
&-\left[ -m~\mathrm{d}_y \left(\frac{1}{2}d^2(x_{\delta,m},\cdot)\right)(y_{\delta,m})b(y_{\delta,m},z)+\frac{1}{2}\left| -m~\mathrm{d}_y \left(\frac{1}{2}d^2(x_{\delta,m},\cdot)\right)(y_{\delta,m})\right|^2 \right]\\
&=m\mathrm{d}_{x}\left(\frac{1}{2} d^2(\cdot,y_{\delta,m})\right)(x_{\delta,m})b(x_{\delta,m},z)+m~\mathrm{d}_y \left(\frac{1}{2}d^2(x_{\delta,m},\cdot)\right)(y_{\delta,m})b(y_{\delta,m},z) \\
&+\frac{m^2}{2}\left(\left|\mathrm{d}_{x}\left(\frac{1}{2} d^2(\cdot,y_{\delta,m})\right)(x_{\delta,m}) \right|^2-\left|- \mathrm{d}_y \left(\frac{1}{2}d^2(x_{\delta,m},\cdot)\right)(y_{\delta,m}) \right|^2\right)\\
&=m\mathrm{d}_{x}\left(\frac{1}{2} d^2(\cdot,y_{\delta,m})\right)(x_{\delta,m})b(x_{\delta,m},z)+m~\mathrm{d}_y \left(\frac{1}{2}d^2(x_{\delta,m},\cdot)\right)(y_{\delta,m})b(y_{\delta,m},z)\\
&
\leq Cd^2(x_{\delta,m},y_{\delta,m}),
\end{split}
\end{equation}
where in the last inequality, we use \Cref{ass_b_one_side}. Noting that
the last term in line 5 vanishes. This is happened because, fix $\delta>0$, there is a compact $K^\delta\subseteq M$ such that $\{x_{m,\delta},~y_{m,\delta}~|~m>0\}$ is contained in $K^\delta$ by \cref{item_iaa} of \Cref{lem_pro3.7}. By the continuity of the injectivity radius and the compactness of $K^\delta$, we can find a $\Delta>0$ such that $i(K^\delta)\geq \Delta >0$. Then there exists a unique geodesic of minimal length connecting $x_{m,\delta}$ and $y_{m,\delta}$. 
Furthermore, by \Cref{lem_dddd} we have
\begin{equation}\label{eqn_staud}
 \mathrm{d}_xd^2(\cdot,y_{m,\delta})(x_{m,\delta})=  -\tau_{x_{m,\delta},~y_{m,\delta}} \mathrm{d}_yd^2(x_{m,\delta},~\cdot)(y_{m,\delta}),
\end{equation}
where $\tau_{x_{m,\delta},y_{m,\delta}}$ denotes parallel transport along the unique geodesic of minimal length connecting $x_{m,\delta}$ and $y_{m,\delta}$. As parallel transport is an isometry, we find
\begin{equation*}
\left|\mathrm{d}_{x}\left(\frac{1}{2} d^2(\cdot,y_{\delta,m})\right)(x_{\delta,m}) \right|_{g(x)}^2=\left|- \mathrm{d}_y \left(\frac{1}{2}d^2(x_{\delta,m},\cdot)\right)(y_{\delta,m}) \right|^2_{g(y)}
\end{equation*}
\par
Hence, \eqref{eqn_B_B} is sufficiently small, as $m\to \infty$,  using \eqref{eq_md}  and \eqref{eqn:B-B}. 
We obtain that \eqref{eqn_I-I} is sufficiently small, utilize the equi-continuity of $\mathcal{I}(\cdot,\pi)$ established in \Cref{lem_Lip} below for the spatial variable. This finishes the proof of \eqref{eqn_H-H} and the comparison principle for $H_\dagger$ and $H_\ddagger$.
 \end{proof}

Here, we state the equi-continuity of $\mathcal{I}(\cdot,\pi)$ to finish the proof of the comparison principle of $H_\dagger$ and $H_\ddagger$ in \Cref{Hd}. The proof was shown on Lemma 6.11 of our work \cite{HKX2023} and extended to complete Riemannian manifold.
\begin{lemma}\label{lem_Lip}
Let \Cref{asm_conti_manifold} be satisfied. Recall \eqref{eqn_I(x,pi)}:
\begin{align*}
\mathcal{I}(x,\pi)&=-\inf_{g>0}\int_{M}\frac{R_{x}g(z)}{g(z)}\pi(\mathrm{d}z).
\end{align*}
For any compact set $K\subseteq M$ and for all $\pi\in \mathcal{P}(S)$, then $\{x\mapsto J(x,\pi)\}_{x\in K}$ is equi-continuous.
\end{lemma}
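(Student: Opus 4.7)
The rate function $\mathcal{I}(x,\pi)$ depends on the manifold point $x$ only through the finite collection $\{q_{ij}(x)\}_{i,j \in S}$, for which \Cref{asm_conti_manifold} supplies a common Lipschitz constant $C_K$ on $K$ with respect to the Riemannian distance $d$. The plan is therefore to transport the argument of \cite[Lemma 6.11]{HKX2023} from the Euclidean setting to the Riemannian setting, substituting $d$ for the Euclidean norm; no manifold-specific geometry enters beyond this substitution.

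First I would rewrite
\begin{equation*}
\mathcal{I}(x,\pi) = \sup_{g > 0} \Phi_{x,\pi}(g), \qquad \Phi_{x,\pi}(g) := -\sum_{z \in S} \pi(z)\sum_{j \in S} q_{zj}(x)\,\frac{g(j)-g(z)}{g(z)},
\end{equation*}
and observe that $\Phi_{x,\pi}$ is scale invariant in $g$, so the supremum may be restricted to $g$ normalized by $\min_z g(z) = 1$, reducing the problem to optimization over a subset of $[1,\infty)^N$. Using \Cref{asm_conservative_manifold} and \Cref{asm_conti_manifold} one has $M_K := \sup_{x \in K,\, i \in S} q_i(x) < \infty$, and combined with the irreducibility of $(q_{ij}(x))_{i,j \in S}$ I would then prove a uniform compactness reduction: for every $\epsilon > 0$ there is a compact $\mathcal{K}_\epsilon \subseteq [1,\infty)^N$ such that
\begin{equation*}
\mathcal{I}(x,\pi) \leq \sup_{g \in \mathcal{K}_\epsilon} \Phi_{x,\pi}(g) + \epsilon \qquad \text{for every } x \in K,\ \pi \in \mathcal{P}(S).
\end{equation*}

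Once the supremum is localized, on $\mathcal{K}_\epsilon$ the map $x \mapsto \Phi_{x,\pi}(g)$ is $C_K \cdot L_{\mathcal{K}_\epsilon}$-Lipschitz in $d$, with $L_{\mathcal{K}_\epsilon}$ depending only on $N$ and the size of $\mathcal{K}_\epsilon$, and crucially independent of $\pi$, since the $x$-dependence enters only through the $q_{ij}(x)$. Taking the supremum over $g \in \mathcal{K}_\epsilon$ propagates this Lipschitz bound to the $\epsilon$-truncation of $\mathcal{I}$, and combining yields
\begin{equation*}
|\mathcal{I}(x,\pi) - \mathcal{I}(y,\pi)| \leq C_K L_{\mathcal{K}_\epsilon}\, d(x,y) + 2\epsilon
\end{equation*}
for all $x, y \in K$ and $\pi \in \mathcal{P}(S)$. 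Letting $\epsilon \to 0$ produces an equi-continuity modulus depending only on $K$.

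The main obstacle is the compactness reduction itself: one must rule out that maximizing sequences escape to the boundary of $[1,\infty)^N$ uniformly in $\pi$, including cases where $\pi$ concentrates on a strict subset of $S$. This is handled by the irreducibility of $R_x$, which guarantees that any divergence of a ratio $g(j)/g(z)$ produces a large negative contribution in $\Phi_{x,\pi}$ through some chain of states connected by positive transition rates from the support of $\pi$. Beyond this step, the adaptation to $M$ is routine and relies only on the Lipschitz bound of \Cref{asm_conti_manifold} applied through the Riemannian distance.
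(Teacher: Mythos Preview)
Your proposal is correct and matches the paper's approach exactly: the paper does not give a self-contained proof but simply refers to \cite[Lemma~6.11]{HKX2023} and notes that the argument carries over to the Riemannian setting with the Riemannian distance replacing the Euclidean one. Your outline in fact supplies more detail than the paper itself, correctly identifying that the $x$-dependence enters only through the Lipschitz data $\{q_{ij}(x)\}$ and that irreducibility is what makes the compactness reduction uniform in $\pi$.
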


\subsubsection{Proof of \Cref{pro_compa_prin}}
We now prove \Cref{pro_compa_prin}; that is, the verification of the comparison principle for the Hamilton-Jacobi equations $f-\lambda Hf=h$. The proof get from \Cref{lem_1d}, \Cref{lem_bfd}, \Cref{lem_h1} and \Cref{Hd}. 
Furthermore, we first obtain \Cref{4} via adding these lemmas in \Cref{3} as below for an easy understanding of the proof strategy of \Cref{pro_compa_prin}. 
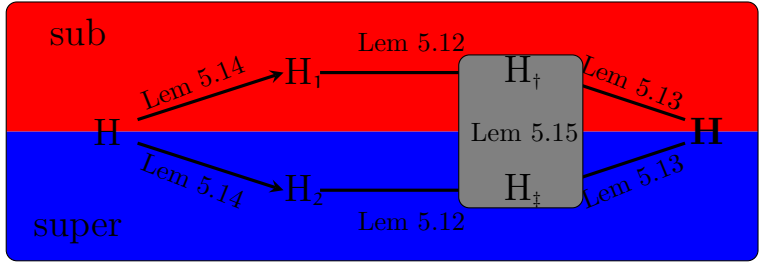
\begin{figure}[htp]
	\centering 
\begin{tikzpicture}[>=stealth,xscale=0.96,yscale=0.78]
	\node[coordinate] (nw) at (-6.8,2.2) {};
	\node[coordinate] (sw) at (-6.8,-2.2) {};
	\node[coordinate] (se) at (3.5,-2.2) {};
	\node[coordinate] (ne) at (3.5,2.2) {};
	\node[coordinate] (e) at (3.5,0) {};
	\node[coordinate] (w) at (-6.8,0) {};
	\fill[blue, opacity=0.2] (w) --  (e) {[rounded corners] |- (sw) -- (w)};
	\fill[red, opacity=0.2] (w) --  (e) {[rounded corners] |- (nw) -- (w)};
	\draw[->, very thick]   (-5,0.2)--node[left=1.5mm,above,sloped]{$\text{Lem~\ref{lem_h1}}$}(-3,1);
	\draw[->, very thick]   (-5,-0.2)--node[left=1.5mm,below,sloped]{$\text{Lem~\ref{lem_h1}}$}(-3,-1);
	\draw[->, very thick]   (-2.5,1)--node[above=1.5mm]{$\text{Lem~\ref{lem_1d}}$}(0,1);
	\draw[->, very thick]   (-2.5,-1)--node[below=1.5mm]{$\text{Lem~\ref{lem_1d}}$}(0,-1);
	\draw[<-, very thick]   (0.55,-1)--node[right=1.5mm,below,sloped]{$\text{Lem~\ref{lem_bfd}}$}(2.5,-0.2);
	\draw[<-, very thick]   (0.55,1)--node[right=1.5mm,above,sloped]{$\text{Lem~\ref{lem_bfd}}$}(2.5,0.2);
	\fill[gray, draw=black,rounded corners,fill opacity=0.3] (-0.6,-1.3) rectangle (1.1,1.3);
	\node at (-5.5,0){$\Large \text{ H}$};
	\node at (-2.8,1){$\Large \text{ H}_1$};
	\node at (-2.8,-1){$\Large \text{ H}_2$};
	\node at (0.2,-1){$\Large \text{ H}_\ddagger$};
	\node at (0.2,1){$\Large \text{ H}_\dagger$};
	\node at (2.7,0){$\Large \textbf{ H}$};
	\node at (-5.9,1.7){$\Large \text{ sub}$};
	\node at (-5.9,-1.7){$\Large \text{ super}$};
	\node at (0.3,0){$\text{Lem~\ref{Hd}}$};
	\draw[rounded corners] (nw) rectangle (se);
	\end{tikzpicture}
	\caption{Add lemmas in Figure \ref{3}}
	\label{4}
\end{figure}
\begin{proof}[Proof of \Cref{pro_compa_prin}]  
Fix $h_1$, $h_2\in C_b(M)$ and $\lambda >0$. Let $u$ be a viscosity subsolution to $(1-\lambda H)f=h_1$ and $v$ be a viscosity supersolution to $(1-\lambda H)f=h_2$. By  \Cref{lem_h1} and \Cref{lem_1d}, the function $u$ is a viscosity subsolution to $(1-\lambda H_\dagger)f=h_1$ (see red part on \Cref{4}) and $v$ is a viscosity supersolution to $(1-\lambda H_\ddagger)f=h_2$ (see blue part on \Cref{4}). Hence by the comparison principle for $H_\dagger$, $H_\ddagger$ established in \Cref{Hd}, we get $\sup_x u(x)-v(x)\leq \sup_{x}h_1(x)-h_2(x)$. This finished the proof.
\end{proof}
\subsection{Proof of \Cref{thm_viscosity_solution}: variational resolvent is a viscosity solution}\label{sec:variational resolvent is a viscosity solution}
In this section, we will prove 
\Cref{thm_viscosity_solution}. In other words, we show that for $h\in C_b(E)$ and $\lambda>0$, 
\begin{equation*}
\begin{split}
	\mathbf{R}(\lambda)h(x):=\sup_{\gamma\in \mathcal{AC}\atop \gamma(0)=x}\left\{\int^\infty_0 \lambda^{-1}e^{-\lambda^{-1}t}\left(h(\gamma(t))-\int^t_0\mathcal{L}(\gamma(r),\dot{\gamma} (r))\mathrm{d}r\right)\mathrm{d}s\right\}.
 \end{split}
\end{equation*} is a viscosity solution of the HJB equation $f-\lambda \mathbf{H}f=h$. For this strategy, we cite  \cite[Chapter 8]{FK2006} or a simplified version in \cite[Section 4]{KS2021}.
One need to check three properties of $\mathbf{R}(\lambda)$:
\begin{enumerate}
    \item \label{item_Ra} For $(f,g)\in \mathbf{H}$, we have $f=\mathbf{R}(\lambda)(f-\lambda g)$.
    \item \label{item_Rb} The operator $\mathbf{R}(\lambda)$ is a pseudo-resolvent: for all $h\in C_b(M)$ and $0<\alpha<\beta$ we have
    \begin{equation*}
        \mathbf{R}(\beta)h= \mathbf{R}\left(\mathbf{R}(\beta)h-\alpha \frac{\mathbf{R}(\beta)h-h}{\beta}\right).
    \end{equation*}
    \item \label{item_Rc} The operator $\mathbf{R}(\lambda)$ is contractive.
\end{enumerate}
Establishing \cref{item_Rc} is straightforward. The proof of \cref{item_Ra} and \cref{item_Rb} was carried out in Chapter 8 of \cite{FK2006} on the basis three assumptions, namely \cite[Assumptions 8.9,8.10 and 8.11]{FK2006}. These proofs are already complete in Euclidean space. This paper extends all proofs to Riemannian manifolds. Recalling this strategy, we conclude that it suffices to prove 
\Cref{pro_three_conditions} and \Cref{pro_pro_three_conditions_super}.
\par
We begin with the first proposition.
\begin{proposition}\label{pro_three_conditions}
	Suppose that $\mathcal{H}: T^*M\to \mathbb{R}$ is convex in $p$ and define $\mathcal{L}$ as its Lagrangian transform. Suppose that there is a good containment function $\Upsilon$ for $\mathcal{H}$. Then
	\begin{enumerate}
		\item \label{item_compact}the function $\mathcal{L}: TM\to [0,\infty]$ is lower semi-continuous and for each compact set $K\subseteq M$ and $c\in \mathbb{R}$ the set
		\begin{equation*}
			\{(x,v)\in TM ~|~x\in K,~ \mathcal{L}(x,v)\leq c\}
		\end{equation*}
		is compact in $TM$.
		\item \label{item_LleqC}for each compact $K\subseteq M$, any finite time $T>0$ and finite bound $C\geq 0$, there exists a compact set  $\hat{K}=\hat{K}(K,T,C)\subseteq M$ such that $x\in \mathcal{AC}(M)$ and $x(0)\in K$, if
		\begin{equation*}
			\int^T_0\mathcal{L}(x(s),\dot{x}(s))\mathrm{d}s\leq C,
		\end{equation*}
		then $x(t)\in \hat{K}$ for all $0\leq t\leq T$.
        \item \label{item:control_Psi_on_L_sets}
        
        for each compact set $K \subseteq M$ and $c \in \bR$, there exists a right-continuous non-decreasing function $\psi_{K,c}:\mathbb{R}^+\to \mathbb{R}^+$ such that 
        \begin{equation}\label{eqn_Psi}
			\lim_{r\to \infty}r^{-1}\psi_{K,c}(r)=0.
		\end{equation}
        and
        \begin{equation*} \label{eqn:definition:}
			|\mathrm{d}f(x) {v}|\leq \psi_{K,c}(\mathcal{L}(x,v)), \qquad \forall \, (x,v) \in TM,~x \in K,
		\end{equation*}
        where $f \in C_{K,c}$ and
        \begin{equation}\label{eqn:def_C_Kc}
            C_{K,c} := \left\{f\in C^{\infty}_c(M) \, \middle| \, \forall \,x \in K,\,|df(x)|\leq c  \right\}.
        \end{equation}
      
	\end{enumerate}
\end{proposition}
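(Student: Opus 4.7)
My plan is to treat the three items separately, using only the explicit expression \eqref{eqn_Hxp} for $\mathcal{H}$, the good containment function $\Upsilon$ from \Cref{lem_fr}, and the Legendre duality $\langle v,p\rangle_{g(x)} \leq \mathcal{L}(x,v) + \mathcal{H}(x,p)$.

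For item \ref{item_compact}, note that $\mathcal{L}$ is the supremum over $p \in T^*_xM$ of the continuous functions $(x,v)\mapsto \langle v,p\rangle_{g(x)}-\mathcal{H}(x,p)$, hence is lower semi-continuous on $TM$. For compactness of sublevel sets with $x\in K$, I would choose $p=Rv/|v|_{g(x)}$ in the definition of $\mathcal{L}$ to obtain the lower bound
\[
\mathcal{L}(x,v) \;\geq\; R\,|v|_{g(x)} \;-\; \sup\bigl\{\mathcal{H}(x,p): x\in K,\,|p|_{g(x)}\leq R\bigr\}.
\]
The supremum on the right is finite since \eqref{eqn_Hxp} shows $\mathcal{H}$ is locally bounded on $T^*M$. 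Thus $\{(x,v): x\in K, \mathcal{L}(x,v)\leq c\}$ is closed and norm-bounded in the unit-disk bundle over $K$, hence compact.

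For item \ref{item_LleqC}, I would apply the Young-type inequality with $p=d\Upsilon(x)$: for any $\gamma\in\mathcal{AC}(M)$ with $\gamma(0)\in K$,
\begin{align*}
\Upsilon(\gamma(t))-\Upsilon(\gamma(0))
&= \int_0^t d\Upsilon(\gamma(s))\,\dot\gamma(s)\,\mathrm{d}s \\
&\leq \int_0^t\mathcal{L}(\gamma(s),\dot\gamma(s))\,\mathrm{d}s+\int_0^t\mathcal{H}(\gamma(s),d\Upsilon(\gamma(s)))\,\mathrm{d}s \\
&\leq C + T\,C_\Upsilon,
\end{align*}
where $C_\Upsilon=\sup_x\mathcal{H}(x,d\Upsilon(x))<\infty$ is finite by the good containment property. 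Then $\hat K:=\{x\in M: \Upsilon(x)\leq \sup_K\Upsilon+C+TC_\Upsilon\}$ is compact by property $(\Upsilon c)$ and contains $\gamma([0,T])$.

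For item \ref{item:control_Psi_on_L_sets}, Legendre duality gives for every $f\in C_{K,c}$, every $(x,v)\in TM$ with $x\in K$ and every $\alpha>0$,
\[
\alpha\,df(x)\,v \;\leq\; \mathcal{H}(x,\alpha\,df(x)) + \mathcal{L}(x,v).
\]
From \eqref{eqn_Hxp} together with $\mathcal{I}\geq 0$ and \Cref{ass_b_linear_growth}, on the compact set $K$ one has the quadratic bound $\mathcal{H}(x,p)\leq C_K|p|_{g(x)}+\tfrac12|p|_{g(x)}^2$ for a constant $C_K$ depending only on $K$. Since $|df(x)|_{g(x)}\leq c$, substituting $p=\alpha df(x)$ yields
\[
df(x)\,v \;\leq\; C_K c + \tfrac12\alpha c^2 + \alpha^{-1}\mathcal{L}(x,v).
\]
Optimizing in $\alpha>0$ produces $df(x)\,v\leq C_K c + c\sqrt{2\,\mathcal{L}(x,v)}$; applying the same argument to $-f\in C_{K,c}$ yields the matching lower bound. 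Hence $\psi_{K,c}(r):=C_Kc + c\sqrt{2r}$ is right-continuous, non-decreasing, and satisfies $\psi_{K,c}(r)/r\to 0$ as $r\to\infty$.

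\textbf{Main obstacle.} The only delicate step is item \ref{item:control_Psi_on_L_sets}: the growth order of $\psi_{K,c}$ is forced by the growth of $\mathcal{H}$ in $p$, which here is quadratic because of the Brownian component; the resulting square-root bound is precisely sublinear, as required. The linear-in-$x$ growth of $b$ from \Cref{ass_b_linear_growth} would be a concern if we needed uniformity in $x$ over all of $M$, but since we are allowed to restrict to the compact set $K$, the constant $C_K=\sup_{x\in K,z\in S}|b(x,z)|_{g(x)}$ is finite, and everything goes through without further obstacles.
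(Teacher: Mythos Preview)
Your arguments for items \ref{item_compact} and \ref{item_LleqC} are correct and essentially identical to the paper's: the paper also bounds $|v|$ via $\sup_{|p|=1}pv\leq \mathcal{L}(x,v)+\sup_{x\in K,|p|=1}\mathcal{H}(x,p)$ for item \ref{item_compact}, and for item \ref{item_LleqC} it carries out exactly the Young-inequality computation you wrote, landing on the same compact sublevel set of $\Upsilon$.

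For item \ref{item:control_Psi_on_L_sets} your route is correct but genuinely different from the paper's. The paper argues abstractly: from local boundedness of $\mathcal{H}$ it deduces superlinearity of $\mathcal{L}$ uniformly over $x\in K$, sets
\[
\vartheta(s):=s\inf_{x\in K}\inf_{|v|\geq s}\frac{\mathcal{L}(x,v)}{|v|},
\]
and then defines $\psi_{K,c}(r):=c\,\vartheta^{-1}(r)$, so that $|df(x)v|\leq c|v|\leq c\,\vartheta^{-1}(\mathcal{L}(x,v))$. You instead exploit the explicit quadratic structure of $\mathcal{H}$ in \eqref{eqn_Hxp}: the bound $\mathcal{H}(x,p)\leq C_K|p|+\tfrac12|p|^2$ on $K$ together with optimization in $\alpha$ yields the explicit function $\psi_{K,c}(r)=C_Kc+c\sqrt{2r}$. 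Your approach is shorter and gives a concrete $\psi_{K,c}$, at the price of being model-specific (it uses the diffusion term $\tfrac12|p|^2$ and the explicit form of $B_{x,p}$); the paper's argument would go through for any continuous Hamiltonian without needing the quadratic growth, which is consistent with the abstract way the proposition is stated.
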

\begin{proof}
	To obtain \Cref{item_compact}, observe that $\mathcal{L}\geq 0$ follows from $\mathcal{H}(x,0)=0$. The Lagrangian $\mathcal{L}$ is convex, and lower semicontinuous as it is the Legendre transform of $\mathcal{H}$, hence in particular continuous. For $C\geq 0$, we prove that the set $\{(x,v)\in TM :x\in K, \mathcal{L}(x,v)\leq C\}$ is bounded, and hence is relatively
	compact. For any $p \in T^*_xM$ and $v\in T_xM$, we have 
	\begin{equation*}
		p v \leq \mathcal{L}(x,v) + \mathcal{H}(x,p)~~x\in K. 
	\end{equation*} Thereby, if
	$\mathcal{L}(x,v) \leq C$, then \begin{equation*}
		|v| = \sup \limits_{|p|=1}
		p  v \leq  \sup\limits_{|p|=1}
		[\mathcal{L}(x,v) + \mathcal{H}(x,p)] \leq C + C_1,    
	\end{equation*} where
	$C_1$ exists due to continuity of $\mathcal{H}$ and $x\in K$. Then for $R := C + C_1$,
	\begin{equation*}
		\{(x,v)\in TM: \mathcal{L}(x,v)\leq C\} \subseteq \{v :
		|v| \leq R\},
	\end{equation*}
	thus $\{\mathcal{L} \leq C\}$ is a bounded subset in $TM$.
	\par
	For \cref{item_LleqC}, recalling that the level sets of $\Upsilon$ are compact, we control the growth of $\Upsilon$. For $K\subseteq M$, $T>0$, $C\geq 0$ and $x\in \mathcal{AC}(M)$ as above, this follows by noting that 
	\begin{align*} \Upsilon(x(t))&=\Upsilon(x(0))+\int^t_0\mathrm{d} \Upsilon(x(s))\dot{x}(s)\mathrm{d}s\\
		&\leq\Upsilon(x(0))+\int^t_0\left[\mathcal{L}(x(s),\dot{x}(s))+\mathcal{H}(x(s),\mathrm{d} \Upsilon(x(s)))\right]\mathrm{d}s\\
		&\leq \sup_{y\in K}\Upsilon(y)+C_1+T\sup_{z\in M}\mathcal{H}(z,\mathrm{d}\Upsilon(z))=C<\infty,
	\end{align*}
	for any $0\leq t \leq T$, so that the compact set $\hat{K}=\{z\in M:\Upsilon(x)\leq C\}$ satisfies the condition.
	\par
 Proof of \ref{item:control_Psi_on_L_sets} is inspired by that of Lemma 10.21 of \cite{FK2006}. We first prove that $\mathcal{L}(x,v)$ is superlinear. Recall that $\mathcal{H}$ is continuous, which implies
    \begin{equation*}
        \overline{H}_K(c) := \sup_{x \in K} \, \, \sup_{p \in T_x^*M, |p| \leq c} \cH(x,p) < \infty.
    \end{equation*}
    Using the definition of $\cL$, it thus follows for any $(x,v) \in TM$, $x \in K$ with $|v| > 0$ that
    \begin{equation*}
        \frac{\cL(x,v)}{|v|} \geq \sup_{p \in T_x^*M, \, |p| \leq c} \frac{pv}{|v|} - \frac{\overline{H}_K(c)}{|v|} = c - \frac{\overline{H}_K(c)}{|v|}
    \end{equation*}
    It follows that
    \begin{equation*}
        \lim_{N \uparrow \infty} \, \,   \inf_{x \in K} \, \,  \inf_{v \in T_xM: |v| = N} \frac{\cL(x,v)}{|v|} = \infty.
    \end{equation*}

	Secondly, for $s\geq 0$, define the map $\vartheta(s)$ by
	\begin{equation}\label{eqn_Lv}
		\vartheta(s):= s \inf_{x \in K} \inf_{v \in T_xM: |v| \geq s}\frac{\cL(x,v)}{|v|}.
	\end{equation}
	It thus follows that $\vartheta$ is a strictly increasing function satisfying
    \begin{equation}\label{eqn:divergence_vartheta}
        \lim_{s \uparrow \infty} \frac{\vartheta(s)}{s} = \infty.
    \end{equation}
 Next, define $\Psi_{K,c}(r):C_{K,c}\vartheta^{-1}(r)$ with $\vartheta^{-1}(r)=\inf\{\omega:\vartheta(\omega)\geq r\}$. By monotonicity of $\vartheta$, we have for any $x \in K$ that
	\begin{equation*}
		\vartheta(C^{-1}_{K,c}|\mathrm{d}f(x) v|)\overset{\eqref{eqn:def_C_Kc}}\leq \vartheta(|v|) \overset{\eqref{eqn_Lv}}\leq \cL(x,v).
	\end{equation*}
	Hence by monotonicity of $\Psi_{K,c}$, we find $|\mathrm{d} f(x) v|\leq \Psi_{K,c}(\cL(x,v))$ for any $f \in C_{K,c}$, and $(x,v) \in TM$ with $x \in K$. Finally \eqref{eqn_Psi} follows by \eqref{eqn:divergence_vartheta} and the definition of $\vartheta^{-1}(r)$.
\end{proof}
\begin{remark}
\Cref{pro_three_conditions} can be used to obtain the path-space compactness of the set of trajectories that start in a compact set and have uniformly bounded Lagrangian cost. The compactness of this set can be used to get the two properties of $\mathbf{V}$ and $\mathbf{R}$.
\begin{itemize}
	\item the resolvents approximate the semigroups as in the Crandall–Ligget theorem, see \cite[Lemma 8.18]{FK2006},
	\begin{equation*}
		\lim_{n\to \infty} \mathbf{R}(t/n)^{n}f(x) = \mathbf{V}(t) f(x); 
	\end{equation*}
	\item the resolvent $\mathbf{R}(\lambda)h$ is a viscosity subsolution to the Hamilton-Jacobi-Bellman equations $f-\lambda \mathbf{H}f=h$, $\lambda> 0$ and $h \in C_b(M)$.
\end{itemize}
\end{remark}
As a by-product of \Cref{pro_three_conditions}, we have a corollary that will be used later.
\begin{corollary} \label{lemma:control_on_local_curves_compactness} Under \Cref{ass_bfH_bounded},
let $K_0\subseteq M$ a compact set and $T > 0$. For any $f\in \mathcal{D}(\mathbf{H})$, then there is a compact set $\hat{K} \subseteq M$ such that any curve $x : [0,T_0)\to M$ with $T_0 \leq T$ satisfying $x(0) \in K_0$ and for all $t < T_0$
\begin{equation} \label{eqn:satisfy_Young}
    \int_0^t \mathbf{H}f(x(s)) \dd s + \int_0^t \cL(x(s),\dot{x}(s)) \dd s = \int_0^t \dd f(x(s)) \dot{x}(s) \dd s
\end{equation}
it holds that $x(t) \in \hat{K}$ for any $t < T_0$.
\end{corollary}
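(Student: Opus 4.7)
The plan is to recognise hypothesis~\eqref{eqn:satisfy_Young} as the equality case of the Fenchel--Young inequality for $(\mathbf{H}f,\cL)$: this immediately pins down the Lagrangian action along $x$ in terms of quantities that depend only on $f$ and $T$, after which \Cref{pro_three_conditions}\ref{item_LleqC} supplies the compact set essentially for free.

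\emph{Step 1: a uniform bound on the Lagrangian action.} Rearranging~\eqref{eqn:satisfy_Young} and applying the chain rule for absolutely continuous curves,
\begin{equation*}
    \int_0^t \cL(x(s), \dot x(s))\,\dd s \;=\; \bigl(f(x(t)) - f(x(0))\bigr) \;-\; \int_0^t \mathbf{H}f(x(s))\,\dd s.
\end{equation*}
Since $f\in\mathcal{D}(\mathbf{H})\subseteq C_b^1(M)$, the first term is bounded by $2\|f\|_\infty$, while the defining property $c_f=\sup_z(-\mathbf{H}f(z))<\infty$ of $\mathcal{D}(\mathbf{H})$ gives $-\int_0^t\mathbf{H}f(x(s))\,\dd s\leq c_f T$. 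Hence, for all $t<T_0$,
\begin{equation*}
    \int_0^t \cL(x(s), \dot x(s))\,\dd s \;\leq\; 2\|f\|_\infty + c_f T \;=:\; C,
\end{equation*}
where $C$ depends only on $f$ and $T$, not on the specific curve or on $T_0$.

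\emph{Step 2: extract compactness via the containment function.} Repeating the computation from the proof of \Cref{pro_three_conditions}\ref{item_LleqC}, apply the Fenchel--Young inequality to $(\dd\Upsilon(x(s)),\dot x(s))$, where $\Upsilon$ is the good containment function of \eqref{eqn_Upsilon}:
\begin{equation*}
    \Upsilon(x(t)) \;\leq\; \sup_{y\in K_0}\Upsilon(y) \;+\; \int_0^t \cL(x(s),\dot x(s))\,\dd s \;+\; T\sup_z \mathcal{H}(z,\dd\Upsilon(z)).
\end{equation*}
All three terms on the right are finite and independent of the specific curve: the first by continuity of $\Upsilon$ on the compact set $K_0$; the second by Step~1; the third by property $(\Upsilon d)$ of a good containment function. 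Let $\hat K$ be the $\Upsilon$-sublevel set cut out by the resulting numerical bound; it is compact by $(\Upsilon c)$ and, by construction, contains $x(t)$ for every $t<T_0$.

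\emph{The one delicate point} is checking that $\hat K$ depends only on the data $K_0$, $T$, $f$ --- and, in particular, neither on the particular curve $x$ nor on its terminal time $T_0$. This is built into the chain of estimates above: every constant entering the bound is determined once $f$, $T$ and $K_0$ are fixed, and the curve enters the argument solely through the Fenchel--Young equality~\eqref{eqn:satisfy_Young}, whose entire content was already consumed in Step~1. No further information about $x$ is needed, and the corollary reduces cleanly to \Cref{pro_three_conditions}\ref{item_LleqC} with the explicit constant $C=2\|f\|_\infty+c_fT$.
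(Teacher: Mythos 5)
Your proof is correct and follows essentially the same route as the paper's: rearrange the Fenchel--Young equality \eqref{eqn:satisfy_Young} to obtain $\int_0^t \cL(x(s),\dot{x}(s))\,\dd s = f(x(t)) - f(x(0)) - \int_0^t \mathbf{H}f(x(s))\,\dd s \leq 2\|f\|_\infty + c_f T$, then invoke \Cref{pro_three_conditions}\ref{item_LleqC}. Your Step~2 simply re-derives the content of that proposition inline rather than citing it as a black box, but the argument is identical.
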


\begin{proof}
    Write $c_f := \sup_{z} \left\{ -\mathbf{H}f(z) \right\}$ and $\|f\|=\sup_x|f(x)|$. Note that by \eqref{eqn:satisfy_Young}, we have for any curve
\begin{equation*}
    \int_0^t \cL(x(s),\dot{x}(s)) \dd s = f(x(t)) - f(x(0)) - \int_0^t \mathbf{H}f(x(s)) \dd s  \leq 2 \vn{f} + tc_f \leq 2 \vn{f} + T c_f
\end{equation*}
the result thus follows by Proposition \ref{pro_three_conditions} \ref{item_LleqC}.
\end{proof}
If one additionally assumes that there exists a trajectory with work $f=0$, which will follow from \Cref{pro_pro_three_conditions_super} below, one can infer that the lower semi-continuous regularization of $\mathbf{R}(\lambda)h$ is a viscosity supersolution to the Hamilton-Jacobi-Bellman equation $f-\lambda \mathbf{H}f=h$, $\lambda> 0$ and $h \in C_b(M)$.
\begin{proposition}[Global solutions' existence]\label{pro_pro_three_conditions_super}
Under \Cref{ass_bfH_bounded},
for initial point $x(0)\in M$ and any $f\in \mathcal{D}(\mathbf{H})$, there exists an absolutely continuous curve $x:[0,\infty)\to M$ and for all $0<t\leq T$ such that
	\begin{equation}\label{eqn_global_local}
		\int^t_0\mathcal{H}(x(s),\mathrm{d} f(x(s)))\mathrm{d}s+\int^t_0\mathcal{L}(x(s),\dot{x}(s))\mathrm{d}s=\int^t_0\mathrm{d}f(x(s))\dot{x}(s)\mathrm{d}s.
	\end{equation}
\end{proposition}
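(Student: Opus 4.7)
The identity \eqref{eqn_global_local} is exactly the equality case in the Fenchel--Young inequality $\mathcal{L}(x,v)+\mathcal{H}(x,p)\geq \langle p,v\rangle$, so the target curve must satisfy $\dot{x}(s)\in \partial_p \mathcal{H}(x(s),\mathrm{d}f(x(s)))$ for a.e.\ $s$. My plan is therefore to (i) construct on each finite horizon $[0,T]$ an approximate curve realising this Fenchel--Young equality up to an error $\eta\downarrow 0$, (ii) use the a priori compactness from \Cref{lemma:control_on_local_curves_compactness} together with the superlinearity of $\mathcal{L}$ obtained in \Cref{pro_three_conditions} to extract a limit curve in $\mathcal{AC}(M)$, and (iii) diagonalise in $T$ to produce a global curve.

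\textbf{Local construction.} Given $f\in \mathcal{D}(\mathbf{H})$, $T>0$ and a time step $h>0$, build a piecewise-geodesic curve $x_h:[0,T]\to M$ inductively by choosing at each node $x_k$ a velocity $v_k\in T_{x_k}M$ realising (up to error $h$) the supremum in $\sup_{v} \{ \mathrm{d}f(x_k)v - \mathcal{L}(x_k,v)\}=\mathcal{H}(x_k,\mathrm{d}f(x_k))$ and setting $x_{k+1}:=\exp_{x_k}(hv_k)$. The supremum is attained on the compact sub-level set provided by \Cref{pro_three_conditions}\ref{item_compact}, and a measurable selection of $v_k$ can be made via standard selection theorems. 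By construction,
\[
	\int_0^t \bigl(\mathcal{L}(x_h(s),\dot{x}_h(s))+\mathcal{H}(x_h(s),\mathrm{d}f(x_h(s)))\bigr)\,\mathrm{d}s=\int_0^t \mathrm{d}f(x_h(s))\dot{x}_h(s)\,\mathrm{d}s+o_h(1).
\]
Because $\int_0^T \mathcal{L}(x_h,\dot{x}_h)\mathrm{d}s\leq 2\|f\|_\infty+Tc_f$ (using that $-\mathbf{H}f\leq c_f$), \Cref{lemma:control_on_local_curves_compactness} forces $x_h([0,T])\subseteq \hat{K}(x(0),T,\|f\|)$ for a compact $\hat{K}\subset M$ independent of $h$.

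\textbf{Passing to the limit and globalisation.} On the compact $\hat{K}$ one can cover by finitely many charts or work directly with the intrinsic distance; uniform integrability of $\dot{x}_h$ (via superlinearity of $\mathcal{L}$ proved in \Cref{pro_three_conditions}) plus the compact containment yield an Arzel\`a--Ascoli-type convergence $x_h\to x\in \mathcal{AC}(M)$ along a subsequence, with $\dot{x}_h\rightharpoonup \dot x$ in a weak sense. Lower semi-continuity of $\int_0^t \mathcal{L}(\gamma,\dot\gamma)\mathrm{d}s$ (\Cref{pro_three_conditions}\ref{item_compact}), continuity of $s\mapsto \mathcal{H}(x(s),\mathrm{d}f(x(s)))$ on $\hat{K}$, and the reverse Fenchel--Young inequality together force the limit to satisfy \eqref{eqn_global_local} with equality on $[0,T]$. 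To promote this to all $t>0$, apply the construction on $[0,T_k]$ for $T_k\uparrow\infty$ and take a diagonal subsequence; consistency on overlaps is automatic because the equality \eqref{eqn_global_local} on $[0,T_k]$ determines the portion of $x$ on $[0,T_j]$ for $j\leq k$ as a critical curve of the same variational problem.

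\textbf{Main obstacle.} The hardest step is the \emph{local-to-global patching on $M$}. Since $\mathcal{H}$ is only a (non-smooth) sup in \eqref{eqn_Hxp}, the subdifferential $\partial_p\mathcal{H}(x,\mathrm{d}f(x))$ is in general multivalued and fails to be continuous in $x$; consequently one cannot invoke classical ODE theory and must rely on an approximate scheme plus compactness. On a Riemannian manifold an additional subtlety is that the exponential map, hence the piecewise-geodesic scheme, is only diffeomorphic below the injectivity radius of $\hat{K}$; one must therefore choose $h$ small relative to $i(\hat{K})$, which is the reason \Cref{lemma:control_on_local_curves_compactness} (providing $\hat{K}$ depending only on $T$ and $f$, not on $h$) is indispensable. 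Once this uniform compactness is in hand, the argument proceeds essentially as in the Euclidean Feng--Kurtz setting.
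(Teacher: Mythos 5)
Your strategy is correct in spirit but it is not the route the paper takes; the two are genuinely different. The paper pulls the problem back through a normal-coordinate chart $\varphi$ via \Cref{lem_lemma1} and solves the differential inclusion $\dot y(t)\in\partial_p\mathcal{H}_\varphi(y(t),\mathrm{d}f_\varphi(y(t)))$ directly by invoking an existence theorem for convex-valued differential inclusions (Deimling) in \Cref{lemma:local_existence}; it then proves in \Cref{lemma:local_existence_control_on_Lagrangian} and \Cref{lemma:control_on_local_curves} a uniform lower bound $T_0(x)\geq R^2/8C$ for the local existence time on a compact, and obtains the global curve by a patching/contradiction argument. You instead build an explicit Euler--Tonelli scheme (piecewise geodesics chosen to realise the Fenchel--Young supremum at each node) on $[0,T]$ and extract a solution as an Arzel\`a--Ascoli/weak limit using superlinearity and lower semicontinuity of $\mathcal{L}$. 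What your route buys is constructiveness (no Deimling) and it sidesteps the uniform lower bound on $T_0$ that \Cref{lemma:control_on_local_curves} is there to supply, since you discretise on the whole interval at once; what the paper's route buys is that all the delicate issues (non-smoothness and multivaluedness of $\partial_p\mathcal{H}$, staying inside one chart) are confined to a clean local existence lemma, after which globalisation is a short elementary contradiction.

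Two steps in your sketch need repair. First, the consistency claim in your diagonalisation is false as stated: equality \eqref{eqn_global_local} on $[0,T_k]$ does \emph{not} determine the curve on $[0,T_j]$ for $j\leq k$ — the inclusion $\dot x\in\partial_p\mathcal{H}(x,\mathrm{d}f(x))$ is multivalued and has no uniqueness. The fix is to run one Euler scheme on all of $[0,\infty)$ for a fixed step $h$ so that restrictions to $[0,T_j]$ are compatible by construction, and then extract a single subsequence $h_n\to 0$ converging locally uniformly. Second, your appeal to \Cref{lemma:control_on_local_curves_compactness} for a compact $\hat K$ \emph{uniform in $h$} requires a comment: that lemma is stated for curves satisfying the exact identity \eqref{eqn:satisfy_Young}, whereas your discrete curves satisfy it only up to $o_h(1)$. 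The proof of that lemma only uses the one-sided bound $\int_0^t\mathcal{L}\,\mathrm{d}s\leq 2\|f\|+tc_f$, which still holds with an extra $+1$ once $h$ is small, but you must say so explicitly — otherwise there is an apparent circularity between $\hat K$ (needed to bound the velocities and hence to justify the exponential steps within the injectivity radius) and the admissible $h$.
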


\subsubsection{The proof of \Cref{pro_pro_three_conditions_super}} 
\Cref{pro_pro_three_conditions_super} asserts the existence of a curve on $M$ such that \eqref{eqn_global_local} holds. This result was originally established in Euclidean space using Deimling and convex analysis, see \cite{H1993,BD1997}. However, this conclusion has not yet been independently extended to Riemannian manifolds. In this chapter, we address this gap by introducing a novel approach based on the transformation into charts to tackle this issue.
\par
 \Cref{lem_lemma1}, tells us how to transfer to chart. 
 \begin{lemma}\label{lem_lemma1}
	Let $\mathcal{O}\subset M$ be a Riemannian manifold. For an invertible smooth map $\varphi: \mathcal{O}\to \varphi(\mathcal{O}):=N$, via push-forward and pullback  in \Cref{se_Riemannian_manifold} define 
	\begin{equation*} \mathcal{H}_{\varphi}:=\mathcal{H}\circ \varphi^*:T^*N \to \mathbb{R}
	\end{equation*}
	and 
	\begin{equation*}
		\mathcal{L}_\varphi:=\mathcal{L}\circ \varphi^{-1}_*:TN\to \mathbb{R},
	\end{equation*} 
	where $\mathcal{H}:T^*M\to \mathbb{R}$ and $\mathcal{L}:TM\to \mathbb{R}$. Suppose $x : [0,\infty) \rightarrow \mathcal{O}$ is any curve such that $y(s)=\varphi(x(s))$, 
	then we have that
	\begin{enumerate}[(a)]
		\item \label{item_HL2}  
		$\mathrm{d}(f\circ\varphi^{-1})(y(s))\dot{y}(s)=\mathrm{d}f(x(s))\dot{x}(s)$,
		\item \label{item_HL3} $
		\mathcal{H}_{\varphi}(y(s),\mathrm{d} (f \circ \varphi^{-1})(y(s)))=\mathcal{H}(x(s),\mathrm{d}f(x(s)))
		$,
		\item \label{item_HL4} 
		$
		\mathcal{L}_{\varphi}(y(s),\dot{y}(s))=\mathcal{L}(x(s),\dot{x}(s))$.
		\item \label{item_HL1} $\mathcal{L}_\varphi $ is the Legendre transform of $\mathcal{H}_\varphi$, i.e.,
		$\mathcal{H}_\varphi(\eta,\xi)=\sup_{w\in T_{\eta}N}\left\{\xi(w)-\mathcal{L}_\varphi(\eta,w)\right\}$, for any $(\eta,\xi)\in T^*N$.
	\end{enumerate}
\end{lemma}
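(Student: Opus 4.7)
The plan is to verify each of the four items by direct computation: items (a)–(c) reduce to the chain rule together with the naturality of the push-forward/pullback pair under the diffeomorphism $\varphi$, and item (d) follows by a change of variables inside the Legendre sup. Let me write $x(s)=\varphi^{-1}(y(s))$, and note the basic identity $\dot{y}(s)=\mathrm{d}\varphi(x(s))\dot{x}(s)$, i.e.\ $\varphi_*\dot{x}(s)=\dot{y}(s)$, obtained by differentiating $y=\varphi\circ x$.

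For \ref{item_HL2}, I would apply the chain rule: $\mathrm{d}(f\circ\varphi^{-1})(y(s))=\mathrm{d}f(x(s))\circ\mathrm{d}\varphi^{-1}(y(s))$, so
\begin{equation*}
\mathrm{d}(f\circ\varphi^{-1})(y(s))\dot{y}(s)=\mathrm{d}f(x(s))\bigl(\mathrm{d}\varphi^{-1}(y(s))\dot{y}(s)\bigr)=\mathrm{d}f(x(s))\dot{x}(s).
\end{equation*}
For \ref{item_HL3}, by the definition $\mathcal{H}_\varphi(\eta,\xi)=\mathcal{H}(\varphi^{-1}(\eta),\varphi^*\xi)$ together with the naturality identity $\varphi^*\mathrm{d}(f\circ\varphi^{-1})(y(s))=\mathrm{d}\bigl((f\circ\varphi^{-1})\circ\varphi\bigr)(x(s))=\mathrm{d}f(x(s))$, I obtain immediately
\begin{equation*}
\mathcal{H}_\varphi\bigl(y(s),\mathrm{d}(f\circ\varphi^{-1})(y(s))\bigr)=\mathcal{H}\bigl(x(s),\varphi^*\mathrm{d}(f\circ\varphi^{-1})(y(s))\bigr)=\mathcal{H}(x(s),\mathrm{d}f(x(s))).
\end{equation*}
For \ref{item_HL4}, by the definition $\mathcal{L}_\varphi(\eta,w)=\mathcal{L}(\varphi^{-1}(\eta),\varphi^{-1}_*w)$, and since $\varphi^{-1}_*\dot{y}(s)=\mathrm{d}\varphi^{-1}(y(s))\dot{y}(s)=\dot{x}(s)$, the identity is immediate.

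For \ref{item_HL1}, which is the only item requiring an actual change of variables, I would start from the Legendre definition $\mathcal{H}(x,p)=\sup_{v\in T_xM}\{p(v)-\mathcal{L}(x,v)\}$ and unfold at $(\eta,\xi)\in T^*N$ with $x=\varphi^{-1}(\eta)$:
\begin{equation*}
\mathcal{H}_\varphi(\eta,\xi)=\mathcal{H}(x,\varphi^*\xi)=\sup_{v\in T_xM}\bigl\{(\varphi^*\xi)(v)-\mathcal{L}(x,v)\bigr\}=\sup_{v\in T_xM}\bigl\{\xi(\varphi_*v)-\mathcal{L}(x,v)\bigr\}.
\end{equation*}
Since $\varphi_*:T_xM\to T_\eta N$ is a linear isomorphism, I substitute $w=\varphi_*v$, equivalently $v=\varphi^{-1}_*w$, and conclude
\begin{equation*}
\mathcal{H}_\varphi(\eta,\xi)=\sup_{w\in T_\eta N}\bigl\{\xi(w)-\mathcal{L}(\varphi^{-1}(\eta),\varphi^{-1}_*w)\bigr\}=\sup_{w\in T_\eta N}\bigl\{\xi(w)-\mathcal{L}_\varphi(\eta,w)\bigr\}.
\end{equation*}

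There is no genuine obstacle here; the whole lemma is a functoriality statement. The only subtlety I would be careful about is to state the definition of $\varphi^*$ on covectors (and $\varphi_*$ on vectors) unambiguously at the outset, since the identity in \ref{item_HL3} hinges on the pair $(\varphi^*,\varphi_*)$ being adjoint under the natural pairing between $T^*N$ and $TN$; this is the one line that underlies both \ref{item_HL3} and the substitution in \ref{item_HL1}.
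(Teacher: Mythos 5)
Your proposal is correct and follows essentially the same route as the paper: both proofs rest on the chain rule, the adjointness relation $(\varphi^*\xi)(v)=\xi(\varphi_*v)$, and (for the last item) a linear change of variables inside the Legendre supremum. The only cosmetic difference is that for item~(a) you differentiate $x=\varphi^{-1}\circ y$ directly, while the paper invokes the curve-push-forward identity $\dot{y}(s)=\varphi_*\dot{x}(s)$ and the chain rule in the other direction; these are the same computation.
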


\begin{proof}
	We start to prove
	\cref{item_HL2}. By \Cref{lem_acurve}, 
	there exists a curve $x(s)$ on $\mathcal{O}$ such that
	\begin{align*}
		\mathrm{d}(f\circ\varphi^{-1})(y(s))\dot{y}(s)&=\mathrm{d}(f\circ \varphi^{-1})(\varphi(x(s)))\varphi_{*}(\dot{x}(s))\\
		&=\mathrm{d}f(x(s))\dot{x}(s),
	\end{align*}
	where in the last show we used the fact that
	\begin{equation}\label{eqn_there_a_curve}
		\begin{split}
			\mathrm{d}(f\circ \varphi^{-1})(\varphi(x(s)))&=\mathrm{d}f(\varphi^{-1}(\varphi(x(s))))\mathrm{d}(\varphi^{-1}(\varphi(x(s))))\phi_{*}(\dot{x}(s))\\
			&=\mathrm{d}f(x(s))\frac{\mathrm{d}}{\mathrm{d}t}\bigg|_{t=s}\varphi^{-1}
			(\varphi(x(t)))\\
			&=\mathrm{d}f(x(s))\dot{x}(s).
		\end{split}
	\end{equation}
	\par
	We prove \cref{item_HL3} based on the ideas when we obtain \cref{item_HL2}.
	Since
	\begin{align*}
		\mathcal{H}_{\varphi}(y(s),\mathrm{d} (f\circ \varphi^{-1})(y(s)))&=\mathcal{H}\circ \varphi^*(\varphi(x(s)),\mathrm{d}(f\circ \varphi^{-1})(\varphi(x(s)))\\
		&=\mathcal{H}(x(s),\varphi^*(\mathrm{d}(f\circ \varphi^{-1})(\varphi(x(s))),
	\end{align*}
	it suffices to prove
	\begin{align}\label{eqn_QPG}
		\varphi^*(\mathrm{d}(f\circ \varphi^{-1})(\varphi(x(s))=\mathrm{d}f(x(s)).
	\end{align}
	Indeed, applying the chain rule \eqref{eqn_chain_rule}, we imply 
	\begin{align*}
		\varphi^*(\mathrm{d}(f\circ \varphi^{-1})(\varphi(x(s))
		&=\mathrm{d}(f\circ \varphi^{-1})(\varphi(x(s))(\varphi_{*}(x(s)))\\
		&=\mathrm{d}f(\varphi^{-1}(\varphi(x(s))))\mathrm{d}(\varphi^{-1}(\varphi(x(s))))(\varphi_{*}(x(s))))\\
		&
		=\mathrm{d}f(x(s)),
	\end{align*}
	where in the last equality we use \eqref{eqn_there_a_curve}.
	Therefore, \eqref{eqn_QPG} holds. We continue to prove \cref{item_HL4} by simple calculating, and get 
	\begin{align*}
		\mathcal{L}_{\varphi}(y(s),\dot{y}(s))&=\mathcal{L}\circ \varphi^{-1}_{*}(\varphi(x(s)),\varphi_{*}(\dot{x}(s)))\\
		&=\mathcal{L}(x(s),\dot{x}(s)).
	\end{align*}
	To prove \cref{item_HL1}, for any $(\eta,\xi)\in T^*N$, we have
	\begin{align*}
		\mathcal{H}_\varphi(\eta,\xi)&=\mathcal{H}(\varphi^{-1}(\eta),\varphi^*(\xi))\\
		&=\sup_{v\in T_\eta M}\left\{\varphi^*(\xi)(v)-\mathcal{L}(\varphi^{-1}(\eta),v)\right\}\\
		&\overset{\eqref{eqn_xiX}}{=}\sup_{\varphi_*(v)\in T_{\eta}N}\left\{\xi(\varphi_*(v))-\mathcal{L}_\varphi(\eta ,\varphi_*(v))\right\}\\
		&=\sup_{w\in T_{\eta}N}\left\{\xi(w)-\mathcal{L}_\varphi(\eta,w)\right\},
	\end{align*}
	where the second equality is the fact that $\mathcal{L}$ is the Legendre transform of $\mathcal{H}$. 
	The proof is completed.
\end{proof}	
To proceed, we give the definition of subdifferential on the Euclidean space $\mathbb{R}^d$.
\begin{definition}
	For a general convex functional $p \mapsto \Phi(p)$ we denote the subdifferential at $p_0\in \mathbb{R}^d$ as the set
	\begin{equation*}
		\partial_p\Phi(p_0):=\{\xi \in \mathbb{R}^d:\Phi(p)\geq \Phi(p_0)+\xi(p-p_0),\forall p\in \mathbb{R}^d\}.
	\end{equation*}
\end{definition}
Work on charts, comparing with the result of \Cref{pro_pro_three_conditions_super}, we first get local solutions' existence.

\begin{lemma}\label{lemma:local_existence}
Let $M$ be a Riemannian manifold and let $x_0 \in M$. Consider the open ball $\cO := B_{R}(x_0)$ around $x_0$ with the radius $R > 0$ strictly smaller than the injectivity radius $i_{x_0}$ at $x_0$. Denote the normal coordinates on the ball $\cO$ by the map $\varphi : \cO \subseteq M \rightarrow \varphi(\cO) \subseteq \mathbb{R}^d$.

    Fix $f \in C^1(M)$. Then the following content holds.
    \begin{enumerate}
        \item \label{item:local_existence_on_chart} Define $f_\varphi=f\circ\varphi^{-1}$. There exists a solution $y(t) : [0,T_0(x)) \rightarrow \varphi(\cO) \subseteq \bR^d$ to the differential inclusion
    \begin{equation}\label{eqn_y_differ_inclu}
    \begin{cases}
        \dot{y}(t)\in \partial_p\mathcal{H}_\varphi(y(t),\mathrm{d}f_{\varphi}(y(t)), \\
        y(0) = 0 = \varphi(x_0)
    \end{cases}
	\end{equation}
    with 
    \begin{equation}
        T_0(x) = \inf \left\{ t > 0 \, \middle| \, y(t) \notin \varphi(B_{R/2}(x_0)) \right\}.
    \end{equation}
    \item \label{item:local_existence_integral_equality} Set $x(t) = \varphi^{-1}(y(t))$. Then the curve $x : [0,T_0(x)) \rightarrow B_{R/2}(x_0) \subseteq M$ satisfies $x(0) = x_0$ and
    \begin{equation}\label{eqn_T0V}
	\int^{t}_0\mathcal{H}(x(s),\mathrm{d} f(x(s)))\mathrm{d}s+\int^{t}_0\mathcal{L}(x(s),\dot{x}(s))\mathrm{d}s=\int^{t}_0\mathrm{d}f(x(s))\dot{x}(s)\mathrm{d}s.
	\end{equation}
    for any $t < T_0(x)$.
    \end{enumerate}
\end{lemma}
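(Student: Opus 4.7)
The plan is to transfer the problem to Euclidean space via the normal coordinate chart, solve the differential inclusion there using classical results (as cited from \cite{H1993, BD1997}), and then pull the solution back using \Cref{lem_lemma1}.

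First, I would verify the structural properties of the transported Hamiltonian $\mathcal{H}_\varphi$ required by the Euclidean existence theory for differential inclusions. By \Cref{lem_lemma1} \ref{item_HL1}, $\mathcal{L}_\varphi$ is the Legendre transform of $\mathcal{H}_\varphi$, and since $\mathcal{H}$ is convex in $p$ (by \Cref{lem_eigen} and \eqref{eqn_Hxp}), the transported $\mathcal{H}_\varphi(\eta, \cdot)$ is convex on each cotangent fiber of $\varphi(\mathcal{O}) \subset \mathbb{R}^d$. Together with the continuity inherited from $\mathcal{H}$ and the smoothness of $\varphi$, this guarantees that the set-valued map $(y, p) \mapsto \partial_p \mathcal{H}_\varphi(y, p)$ is upper semicontinuous with nonempty compact convex values. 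Since $f \in C^1(M)$ and $\varphi$ is a diffeomorphism, $f_\varphi \in C^1(\varphi(\mathcal{O}))$, so $y \mapsto df_\varphi(y)$ is continuous.

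Next, I would apply the existence theorem for differential inclusions with upper semicontinuous, convex, compact-valued right-hand sides (Deimling-style, as in \cite{H1993, BD1997}) to obtain a local absolutely continuous solution $y(t)$ to \eqref{eqn_y_differ_inclu} starting from $y(0) = 0$. The stopping time $T_0(x)$ is then defined as the first exit from $\varphi(B_{R/2}(x_0))$, which is strictly positive because $y$ is continuous, starts at the origin (which lies well inside $\varphi(B_{R/2}(x_0))$ since $R/2 < R < i_{x_0}$), and $\varphi(B_{R/2}(x_0))$ is an open neighborhood of the origin in $\varphi(\mathcal{O})$. This handles \ref{item:local_existence_on_chart}.

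For \ref{item:local_existence_integral_equality}, the key observation is the Young equality: the differential inclusion $\dot{y}(t) \in \partial_p \mathcal{H}_\varphi(y(t), df_\varphi(y(t)))$ is equivalent to the Legendre-Fenchel duality attaining equality, namely
\begin{equation*}
\mathcal{H}_\varphi(y(t), df_\varphi(y(t))) + \mathcal{L}_\varphi(y(t), \dot{y}(t)) = df_\varphi(y(t)) \dot{y}(t)
\end{equation*}
for almost every $t < T_0(x)$. Setting $x(t) = \varphi^{-1}(y(t))$, which is absolutely continuous and stays inside $B_{R/2}(x_0)$ by construction, I would then apply parts \ref{item_HL2}, \ref{item_HL3}, and \ref{item_HL4} of \Cref{lem_lemma1} termwise to rewrite each quantity in terms of $x$ and $\dot{x}$, and integrate over $[0, t]$ to obtain \eqref{eqn_T0V}.

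The main obstacle I anticipate is verifying that the hypotheses of the Euclidean differential inclusion existence theorem (upper semicontinuity and local boundedness of $\partial_p \mathcal{H}_\varphi$) hold in the present setting, since $\mathcal{H}$ from \eqref{eqn_Hxp} is defined via a supremum over $\pi \in \mathcal{P}(S)$ and is not presented as a smooth function of $p$; one must work with the convex-analysis characterization of its subdifferential rather than a pointwise gradient. A secondary subtlety is ensuring that staying in $\varphi(B_{R/2}(x_0))$ — strictly inside $\varphi(\mathcal{O}) = \varphi(B_R(x_0))$ — gives the needed room for continuity of $y$ to guarantee $T_0(x) > 0$; this is the reason for choosing the smaller ball of radius $R/2$ rather than $R$ itself.
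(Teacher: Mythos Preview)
Your proposal is correct and follows essentially the same approach as the paper: transfer to the chart via \Cref{lem_lemma1}, invoke a Euclidean differential inclusion existence result (the paper cites Lemma~5.1 of Deimling \cite{D1992MR1189795} rather than \cite{H1993,BD1997}, but the content is the same), and then use the Fenchel--Young equality together with \Cref{lem_lemma1} \ref{item_HL2}--\ref{item_HL4} to pull \eqref{eqn_T0V} back to $M$. The only cosmetic difference is that the paper spells out both inequalities of the Young equality separately, whereas you invoke the equivalence of $\dot{y}\in\partial_p\mathcal{H}_\varphi$ with equality in Fenchel--Young directly; these are the same argument.
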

\begin{proof}
	We first prove the differential inclusion \eqref{eqn_y_differ_inclu}. By taking $\mathcal{O}=B_{R}(x_0)$ in \Cref{lem_lemma1} and define $T_0(x) = \inf \left\{ t > 0 \, \middle| \, y(t) \notin \varphi(B_{R/2}(x_0)) \right\}$, we transfer a differential inclusion on $M$ to the chart $\varphi(\mathcal{O})\subseteq \mathbb{R}^d$, and obtain \eqref{eqn_y_differ_inclu}. The subdifferential $\partial_p\mathcal{H}_\varphi(y(t),\mathrm{d}f_\varphi(y(t))$ satisfies all the conditions of Lemma 5.1 of \cite{D1992MR1189795}. Hence, there exists a solution $y(t)$ such that \eqref{eqn_y_differ_inclu} holds.
	\par
	Next, we turn to prove that there exists a solution such that \eqref{eqn_T0V} holds by local construction. To do it, for the initial point $x_0\in M$, there exists a ball $B_{R}(x_0)$ with $R$ strictly smaller than   $i_{x_0}$. By \eqref{eqn_y_differ_inclu}, we further have
\begin{equation}\label{eqn_Uy}
\int^t_0\mathcal{H}_{\varphi}(y(s),\mathrm{d} f_{\varphi}(y(s)))\mathrm{d}s+\int^t_0\mathcal{L}_{\varphi}(y(s),\dot{y}(s))\mathrm{d}s=\int^t_0\mathrm{d}f_{\varphi}(y(s))\dot{y}(s)\mathrm{d}s.
	\end{equation}
 on $\varphi(\mathcal{O})$.
 \par
 We claim that there exists a curve $y(s):[0,T_0(x))\to \varphi(\mathcal{O})$ such that \eqref{eqn_Uy} holds. 
  On the one hand, we have that
		\begin{equation*}
			\mathcal{H}_\varphi(y(s),\mathrm{d}f_{\varphi}(y(s) ))\geq \mathrm{d}f_{\varphi}(y(s))\dot{y}(s)-\mathcal{L}_\varphi(y(s),\dot{y}(s)),
	\end{equation*}
	for all $y(s)\in \varphi(\mathcal{O})$, via convex duality.
Then, integrating the above inequality gives one inequality in \eqref{eqn_Uy}.
\par Regarding the other inequality, via \eqref{eqn_y_differ_inclu} we obtain for all $p\in \varphi(\mathcal{O})$, 
\begin{equation*}
	\mathcal{H}_\varphi (y(s),p)\geq \mathcal{H}_\varphi(y(s),\mathrm{d}f_{\varphi}(y(s) ))+ \dot{y}(s) \left(p- \mathrm{d}f_{\varphi}(y(s))\right),
\end{equation*}
and as a consequence
\begin{align*}
\mathcal{H}_\varphi(y(s),\mathrm{d}f_{\varphi}(y(s) ))\leq \mathrm{d}f_{\varphi}(y(s))\dot{y}(s)-\mathcal{L}_\varphi(y(s),\dot{y}(s)),
\end{align*}
and integrating gives the other inequality.
After that by \Cref{lem_lemma1}, we transfer \eqref{eqn_Uy} on $\phi(\mathcal{O})$ to \eqref{eqn_T0V} on $M$. Then we get a curve $x(t)=\varphi^{-1}(y(s))\subseteq \mathcal{O}\subseteq M$ satisfying \eqref{eqn_T0V} for any $s < T_0(x)$. 
\end{proof}

Next showing the curve as in \Cref{lemma:local_existence} has Lagrangian cost that grows linearly in time uniformly in their starting point in a compact set.

\begin{lemma} \label{lemma:local_existence_control_on_Lagrangian}

Let $M$ be a Riemannian manifold and $K\subseteq M$ a compact set. Fix $R \in [i(K)/2,i(K))$. Then there is a constant $C$ such that for any curve $x(t) : [0,T_0(x)) \rightarrow B_{R/2}(x_0)$ with $x(0) \in K$ as in Lemma \ref{lemma:local_existence}, we have
    \begin{equation*}
        \int_0^t \cL(x(s),\dot{x}(s)) \dd s \leq C t
    \end{equation*}
    for any $t < T_0(x)$.
\end{lemma}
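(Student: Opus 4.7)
The plan is to exploit the Fenchel--Young equality that the subdifferential inclusion in \Cref{lemma:local_existence} \ref{item:local_existence_on_chart} gives for free. Since $\dot{y}(s)\in\partial_p\mathcal{H}_\varphi(y(s),df_\varphi(y(s)))$, convex duality on the chart forces the Young inequality to be saturated:
\begin{equation*}
\mathcal{L}_\varphi(y(s),\dot{y}(s)) \;=\; df_\varphi(y(s))\cdot\dot{y}(s) \;-\; \mathcal{H}_\varphi(y(s),df_\varphi(y(s))),
\end{equation*}
and by \Cref{lem_lemma1} this pulls back to the intrinsic identity
\begin{equation*}
\mathcal{L}(x(s),\dot{x}(s)) \;=\; df(x(s))[\dot{x}(s)] \;-\; \mathcal{H}(x(s),df(x(s))).
\end{equation*}
If each term on the right can be bounded by a constant depending only on $K$, $R$ and $f$ (uniformly in $s<T_0(x)$ and in the starting point $x_0\in K$), then integrating over $[0,t]$ delivers the required $Ct$ bound.

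I would first build the enlarged set $\hat K:=\overline{\bigcup_{x_0\in K}B_{R/2}(x_0)}$. It is compact, because $K$ is compact, $R$ is finite and, by Hopf--Rinow, closed bounded sets in the complete manifold $M$ are compact. Since $f\in C^1(M)$, $|df|_{g^*}$ is bounded on $\hat K$ by some $C_f$, and by continuity of $\mathcal{H}$ on the compact set $\hat K\times\{p\in T^*M:|p|_{g^*}\leq C_f\}$, $\mathcal H(x(s),df(x(s)))$ is uniformly bounded.

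The main obstacle is controlling $|\dot{x}(s)|_g$ uniformly. I would use the classical local-boundedness of subdifferentials of finite-valued continuous convex functions on $\mathbb{R}^d$: for every $\delta>0$,
\begin{equation*}
\sup\bigl\{|\xi|:\xi\in\partial_p\Phi(p_0)\bigr\}\;\leq\;\delta^{-1}\bigl[\sup_{|q|\leq\delta}\Phi(p_0+q)-\Phi(p_0)\bigr].
\end{equation*}
Applied on the chart with $\Phi=\mathcal{H}_\varphi(y(s),\cdot)$, $p_0=df_\varphi(y(s))$ and, say, $\delta=1$, continuity of $\mathcal{H}_\varphi$ on the relevant compact product set gives $|\dot{y}(s)|\leq C_1$ uniformly in $s$. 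Since normal-coordinate charts and their inverses have Jacobians with bounded operator norm on their (relatively compact) images, this transfers to a uniform bound $|\dot{x}(s)|_g\leq C_3$ on the manifold. The subtle point is that the chart $\varphi=\varphi_{x_0}$ depends on $x_0\in K$; I would address this by either (a) observing that $x_0\mapsto\varphi_{x_0}$ is continuous because $R<i(K)$ and $K$ is compact, which lets me extract a single uniform constant, or (b) more cleanly, reformulating the boundedness of the subdifferential intrinsically on $TM$ using only the continuity of $\mathcal{H}$ on $\hat K\times\{|p|_{g^*}\leq C_f+1\}$, bypassing charts entirely.

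Combining these bounds yields $\mathcal{L}(x(s),\dot{x}(s))\leq C_fC_3+\sup_{\hat K}|\mathcal{H}(\cdot,df(\cdot))|=:C$ for every $s<T_0(x)$ and every starting point $x_0\in K$, and integrating gives $\int_0^t\mathcal{L}(x(s),\dot{x}(s))\,ds\leq Ct$.
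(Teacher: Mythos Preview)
Your proof is correct but follows a genuinely different route from the paper's. Both arguments start from the pointwise Fenchel--Young equality
\[
\mathcal{L}(x(s),\dot{x}(s)) = \mathrm{d}f(x(s))[\dot{x}(s)] - \mathcal{H}(x(s),\mathrm{d}f(x(s)))
\]
and bound the Hamiltonian term by continuity on the compact enlargement $\hat K$. The divergence is in how the term $\mathrm{d}f(x(s))[\dot{x}(s)]$ is handled. You bound $|\dot{x}(s)|$ \emph{directly} via the classical local boundedness of subdifferentials of continuous convex functions, obtaining a uniform pointwise bound on $\mathcal{L}$ and then integrating. The paper instead invokes the abstract estimate $|\mathrm{d}f(x)v|\leq \psi_{\hat K,R}(\mathcal{L}(x,v))$ from Proposition~\ref{pro_three_conditions}~\ref{item:control_Psi_on_L_sets}, splits the time interval according to whether $\mathcal{L}\geq r^*$ or $\mathcal{L}<r^*$, and uses the sublinearity $\psi(r)/r\to 0$ to obtain a self-referential inequality
\[
\int_0^t \mathcal{L}\,\mathrm{d}s \;\leq\; m\int_0^t \mathcal{L}\,\mathrm{d}s + t\bigl(\psi_{\hat K,R}(r^*)+c_{f,\hat K}\bigr),\qquad m<1,
\]
which is then rearranged. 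Your argument is shorter and more elementary for this particular lemma; the paper's argument has the advantage of reusing the $\psi_{K,c}$ machinery verbatim in the subsequent Lemma~\ref{lemma:control_on_local_curves}, and of never needing to track how the chart (and hence the Euclidean subdifferential bound) depends on the base point $x_0\in K$, an issue you correctly flag and then address.
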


\begin{proof}
First of all, denote by $\hat{K}$ the compact set obtained by covering $K$ by balls of radius $R/2$. No considered curve can leave $\hat{K}$ by construction.

Denote $c_{f,\hat{K}}=\sup_{z\in\hat{K}}\{-\mathbf{H}f(z)\}$. 
As $x$ satisfies \eqref{eqn_T0V}, we have by Proposition \ref{pro_three_conditions}  \ref{item:control_Psi_on_L_sets}
\begin{align*}
    \int_0^t \cL(x(s),\dot{x}(s)) \dd s & = \int_0^t \dd f(x(s)) \dot{x}(s) \dd s - \int_0^t \mathbf{H}f(x(s)) \dd s \\
    & \leq \int_0^t \psi_{\hat{K},R} \left(\cL(x(s),\dot{x}(s)) \right)\dd s+ t c_{f,\hat{K}}.
\end{align*}
Furthermore, as $\psi_{\hat{K},R}$ is non-decreasing and the fact that $\frac{\psi_{\hat{K},R}(r)}{r}$ converges to $0$ for $r\to \infty$, there exist $0<m<1$ and $r^*\geq 1$ such that $\frac{\Psi_{\hat{K},R}(r)}{r}\leq m$ for $r \geq r^*$. Proceeding our estimate, by splitting the integral into regions $[0,t] = I_1 \cup I_2$ with
\begin{align*}
    I_1 & := \left\{s \in [0,t] \, \middle| \, \cL(x(s),\dot{x}(s)) \geq r^*\right\}, \\
    I_2 & := \left\{s \in [0,t] \, \middle| \, \cL(x(s),\dot{x}(s)) < r^*\right\},
\end{align*}
we get
\begin{align*}
    \int_0^t \cL(x(s),\dot{x}(s)) \dd s  & \leq \int_{I_1} \frac{\psi_{\hat{K},R}(\cL(x(s),\dot{x}(s)))}{\cL(x(s),\dot{x}(s))}\cL(x(s),\dot{x}(s)) \dd s +\int_{I_2} \psi_{\hat{K},R}(\cL(x(s),\dot{x}(s))) \dd s + tc_{f,\hat{K}}\\
    & \leq m \int_0^t \cL(x(s),\dot{x}(s)) \dd s + t \left( \psi_{\hat{K},R}(r^*) + c_{f,\hat{K}}\right).
\end{align*}
Rearranging terms leads to
\begin{equation*}
    \int_0^t \cL(x(s),\dot{x}(s)) \dd s \leq t \frac{\psi_{\hat{K},R}(r^*) + c_{f,\hat{K}}}{1-m}
\end{equation*}
establishing the claim with $C=\frac{\psi_{\hat{K},R}(r^*) + c_{f,\hat{K}}}{1-m}$. 
\end{proof}

Next we control the speed at which curves as in \Cref{lemma:local_existence}
move away from their starting point.

\begin{lemma} \label{lemma:control_on_local_curves}
Let $M$ be a Riemannian manifold and $K\subseteq M$ a compact set. Fix $R \in [i(K)/2,i(K))$. 

Then there is a $C > 0$ such that for any $x_0 \in K$ and any curve $x(t) : [0,T_0(x)) \rightarrow B_{R/2}(x_0)$ with $x(0) = x_0$ as in Lemma \ref{lemma:local_existence}, we have
\begin{equation*}
    \frac{1}{2}d^2(x(t),x_0) \leq t C
\end{equation*}
for any $t < T_0(x)$. In particular $T_0(x) \geq \frac{R^2}{8C}$.
\end{lemma}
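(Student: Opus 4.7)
The plan is to use $\rho_{x_0}(x):=\tfrac{1}{2}d^2(x,x_0)$ as a smooth test function on $B_R(x_0)$, control its growth along the curve by the Fenchel--Young inequality, and combine this with the Lagrangian cost bound from \Cref{lemma:local_existence_control_on_Lagrangian}. Since $R<i(K)\leq i_{x_0}$ for any $x_0\in K$, the function $\rho_{x_0}$ is smooth on $B_R(x_0)$ (the cut locus of $x_0$ is avoided), and by Gauss's lemma (equivalently, \Cref{lem_dddd}) one has $|\dd\rho_{x_0}(x)|_{g(x)}=d(x,x_0)\leq R/2$ for all $x\in B_{R/2}(x_0)$. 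Let $\hat{K}$ be the compact set obtained by covering $K$ with balls of radius $R/2$; every curve in the statement remains inside $\hat{K}$.

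First I would prove a \emph{uniform bound on the Hamiltonian along the curve}. Using the explicit representation \eqref{eqn_Hxp} and the nonnegativity $\cI(x,\pi)\geq 0$, one obtains
\begin{equation*}
\cH(x,\dd\rho_{x_0}(x))\leq \sup_{z\in S}|b(x,z)|\cdot|\dd\rho_{x_0}(x)|+\tfrac{1}{2}|\dd\rho_{x_0}(x)|^2\leq C_1,
\end{equation*}
where $C_1$ depends only on $R$ and $\sup_{\hat{K}\times S}|b|$, the latter being finite by \Cref{ass_b_linear_growth} and compactness of $\hat{K}$.

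Second, I would apply the Fenchel--Young inequality $p v\leq \cL(x,v)+\cH(x,p)$ pointwise along $x(\cdot)$ with $p=\dd\rho_{x_0}(x(s))$ and $v=\dot{x}(s)$, which gives
\begin{equation*}
\tfrac{\dd}{\dd s}\rho_{x_0}(x(s))\leq \cL(x(s),\dot{x}(s))+C_1.
\end{equation*}
Integrating from $0$ to $t$, using $\rho_{x_0}(x_0)=0$ and the bound $\int_0^t\cL(x(s),\dot{x}(s))\,\dd s\leq \tilde{C}t$ supplied by \Cref{lemma:local_existence_control_on_Lagrangian}, yields $\tfrac{1}{2}d^2(x(t),x_0)\leq (\tilde{C}+C_1)t=:Ct$, with $C$ independent of $x_0\in K$. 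The exit-time bound $T_0(x)\geq R^2/(8C)$ then follows immediately by contradiction: if $T_0(x)<R^2/(8C)$, then by definition of $T_0(x)$ one has $d(x(t),x_0)\to R/2$, so $\tfrac{1}{2}(R/2)^2\leq C\,T_0(x)<R^2/8$, which is absurd.

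The one delicate point, and really the reason the whole argument works, is ensuring that $\rho_{x_0}$ is smooth throughout the trajectory and that the chain rule for $s\mapsto \rho_{x_0}(x(s))$ is applicable. This is exactly why one chooses $R$ strictly below the injectivity radius of $K$ and confines the curve inside $B_{R/2}(x_0)\subseteq B_R(x_0)$, guaranteeing that the cut locus of $x_0$ is avoided so that the Gauss-lemma identity $|\dd\rho_{x_0}|=d(\cdot,x_0)$ is available. Once this geometric input is in place, the Fenchel--Young step and the conclusion are routine.
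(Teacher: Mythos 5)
Your proof is correct, but it takes a genuinely different and in fact more direct route than the paper's. Both proofs reduce the claim to the Lagrangian cost bound of \Cref{lemma:local_existence_control_on_Lagrangian}, but they diverge in how they control $\int_0^t \dd\rho_{x_0}(x(s))\dot{x}(s)\,\dd s$. The paper works with the globally defined cut-off test function $g_{x_0,R}$ from \Cref{lemma:smooth_distance_cutoff}, and then invokes the superlinearity estimate $|\dd g(x)v|\leq\psi_{\hat{K},R}(\cL(x,v))$ from \Cref{pro_three_conditions}~\ref{item:control_Psi_on_L_sets}, followed by the split of $[0,t]$ into $I_1\cup I_2$ according to whether $\cL(x(s),\dot{x}(s))$ exceeds a threshold $r^*$ — essentially repeating the machinery already used in the proof of \Cref{lemma:local_existence_control_on_Lagrangian}. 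You instead apply the Fenchel--Young inequality $pv\leq\cL(x,v)+\cH(x,p)$ with $p=\dd\rho_{x_0}(x(s))$, and couple it with a uniform bound $\cH(x,\dd\rho_{x_0}(x))\leq C_1$ read directly off the representation \eqref{eqn_Hxp} (using $\cI\geq0$, the bound $|\dd\rho_{x_0}|\leq R/2$ on $B_{R/2}(x_0)$, and boundedness of $b$ on the compact $\hat{K}$ from \Cref{ass_b_linear_growth}). This completely bypasses the $\psi_{K,c}$ machinery and the splitting trick, at the price of using the explicit structure of the Hamiltonian rather than only its continuity; in this paper both ingredients are available, so your argument is valid and arguably cleaner. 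You correctly note the one geometric point that makes either version work: keeping $R<i(K)\leq i_{x_0}$ so that the curve stays clear of the cut locus of $x_0$, so that $\rho_{x_0}$ is smooth along the trajectory and the chain rule applies a.e.\ for the absolutely continuous curve. The paper's choice of the cut-off $g_{x_0,R}$ serves the same purpose while producing a globally defined element of $C_{K,R}$, which is needed there to quote \Cref{pro_three_matters} \ref{item:control_Psi_on_L_sets}; since you never invoke that item, the raw $\rho_{x_0}$ suffices for your version. The exit-time deduction $T_0(x)\geq R^2/(8C)$ at the end is the same in both.

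(One small notational slip: in the reference to \Cref{pro_three_conditions} above, \texttt{pro\_three\_matters} should read \texttt{pro\_three\_conditions}; and your constant chain $C=\tilde{C}+C_1$ differs harmlessly from the paper's $C=mC_1+\psi_{\hat{K},R}(r^*)$.)
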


We start with a preliminary lemma, that will be used in the proof of \Cref{lemma:control_on_local_curves}.

\begin{lemma} \label{lemma:smooth_distance_cutoff}
Let $K \subseteq M$ be a compact set in $M$. For any $x_0 \in K$ and radius $R < i_{x_0}$, set
\begin{equation*}
    g_{x_0,R}(x) = \theta_R\left(\frac{1}{2}d^2(x,x_0)\right)
\end{equation*}
where $\theta_R : [0,\infty) \rightarrow [0,\frac{3}{4}R]$ is a smooth non-decreasing function, satisfying $\theta'_R(r) \leq 1$ where $\theta_R(r) = r$ for $r \leq R/2$ and $\theta_R(r)$ is constant for $r \geq \frac{3}{4}R$.

For any such $R$, we have $g_{x_0,R} \in C_{K,R}$ where $C_{K,R}$ was defined in Proposition \ref{pro_three_conditions} \ref{item:control_Psi_on_L_sets}. Moreover, $g_{x_0,R}\in \mathcal{D}(\mathbf{H})$.
\end{lemma}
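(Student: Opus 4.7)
The plan is to verify the two claims separately, leveraging the explicit formula for $g_{x_0,R}$ together with the smoothness of the squared distance inside the injectivity radius of $x_0$.

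First I would show that $g_{x_0,R}\in C^\infty(M)$ and that its differential is uniformly controlled on $M$. Since $R<i_{x_0}$, the squared distance $x\mapsto \tfrac{1}{2}d^2(x,x_0)$ is smooth on the geodesic ball $B_R(x_0)$ (all geodesics from $x_0$ are minimizing inside this ball, and the exponential map is a diffeomorphism). By construction $\theta_R$ is constant on $[3R/4,\infty)$, so $g_{x_0,R}$ is constant (and hence smooth) in a neighborhood of the set where the squared distance fails to be smooth, provided the plateau region is reached before the cut locus of $x_0$ — this is where one uses the assumption $R<i_{x_0}$ and, if necessary, a slight tightening of the constants defining $\theta_R$. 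The chain rule then gives
\begin{equation*}
 \dd g_{x_0,R}(x) \;=\; \theta_R'\!\left(\tfrac{1}{2}d^2(x,x_0)\right)\,\dd_x\!\left(\tfrac{1}{2}d^2(\cdot,x_0)\right)(x),
\end{equation*}
and \Cref{lem_dddd} implies $|\dd_x(\tfrac{1}{2}d^2(\cdot,x_0))(x)|=d(x,x_0)$ inside the injectivity radius. Combined with $0\leq \theta_R'\leq 1$ and the fact that $\theta_R'$ vanishes as soon as $d(x,x_0)\geq \sqrt{3R/2}$, this yields an explicit uniform bound on $|\dd g_{x_0,R}|$, which inserted into the definition of $C_{K,R}$ in \eqref{eqn:def_C_Kc} gives $g_{x_0,R}\in C_{K,R}$.

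Next, for $g_{x_0,R}\in \mathcal{D}(\mathbf{H})$ I need $\sup_z\{-\mathbf{H}g_{x_0,R}(z)\}<\infty$. Using the variational representation \eqref{eqn_Hxp} and plugging in the invariant measure $\pi^z$ of $R_z$ (which by definition of the Donsker--Varadhan functional \eqref{eqn_I(x,pi)} satisfies $\mathcal{I}(z,\pi^z)=0$) gives
\begin{equation*}
 -\mathbf{H}g_{x_0,R}(z)\;\leq\; -\int_M B_{z,\dd g_{x_0,R}(z)}(i)\,\pi^z(\dd i)\;=\;-\bar{b}(z)\,\dd g_{x_0,R}(z)-\tfrac{1}{2}|\dd g_{x_0,R}(z)|^{2}.
\end{equation*}
Because $\dd g_{x_0,R}$ is uniformly bounded by the previous step and is supported in the compact set $\overline{B_{\sqrt{3R/2}}(x_0)}$, \Cref{ass_b_linear_growth} immediately yields a global bound $\sup_z|{-\mathbf{H}g_{x_0,R}(z)}|<\infty$. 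Combined with the obvious boundedness $0\leq g_{x_0,R}\leq 3R/4$ we conclude $g_{x_0,R}\in C_b^1(M)$ and $g_{x_0,R}\in\mathcal{D}(\mathbf{H})$.

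The main obstacle I anticipate is purely geometric: one must make sure that the cutoff plateau $\{r\geq 3R/4\}$ of $\theta_R$ is reached strictly before one hits the cut locus of $x_0$, so that $\tfrac{1}{2}d^2(\cdot,x_0)$ is $C^\infty$ on the entire set where $\theta_R'\neq 0$. Once this geometric setup is fixed (by taking $R$ sufficiently small relative to $i_{x_0}$ and choosing the shape of $\theta_R$ carefully so that $\sqrt{3R/2}$ still lies inside the safe region), everything reduces to the chain-rule computation above plus the variational representation of $\mathcal{H}$; no further analytic difficulty arises.
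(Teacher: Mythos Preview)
Your argument for the first claim (the chain-rule computation giving $|\dd g_{x_0,R}(x)|\leq d(x,x_0)$ on the support of $\theta_R'$) is exactly what the paper does, and your geometric caveat about the plateau of $\theta_R$ being reached before the cut locus is a fair point that the paper glosses over.

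For the second claim, however, you take a genuinely different route. The paper argues via the local curves of \Cref{lemma:local_existence}: it writes the Young equality $\int_0^t \dd g\cdot\dot x=\int_0^t\mathcal{L}+\int_0^t\mathbf{H}g$ along such a curve started at $x_0$, observes that the left side equals $\tfrac12 d^2(x(t),x_0)\ge 0$, bounds $\int_0^t\mathcal{L}$ linearly in $t$ via \Cref{lemma:local_existence_control_on_Lagrangian}, and from the resulting integral inequality infers the pointwise bound $\sup_z\{-\mathbf{H}g_{x_0,R}(z)\}\le C$. Your approach is instead purely algebraic: plug the invariant measure $\pi^z$ into the variational formula \eqref{eqn_Hxp} to get $-\mathbf{H}g_{x_0,R}(z)\le -\bar b(z)\,\dd g_{x_0,R}(z)-\tfrac12|\dd g_{x_0,R}(z)|^2$, and then use that $\dd g_{x_0,R}$ is bounded and compactly supported together with \Cref{ass_b_linear_growth}. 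This is more direct and more robust: it avoids any reliance on the curve machinery (which the present lemma is itself meant to feed into) and sidesteps the somewhat delicate step in the paper of passing from an integral bound along one curve to a global supremum. The paper's route, on the other hand, ties the lemma back into the control-theoretic framework being developed, but at the cost of a more circuitous argument.
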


\begin{proof}
    By construction, we have
    \begin{equation*}
        \dd g_{x_0,R}(x) = \theta_R'\left(\frac{1}{2}d^2(x,x_0)\right) d(x,x_0)
    \end{equation*}
    which by the properties of $\theta_R$ satisfies
    \begin{equation*}
        | \dd g_{x_0,R}(x) | \leq |d(x,x_0)| \leq R
    \end{equation*}
    for any $x \in M$. In particular, we have $g_{x_0,R} \in C_{K,M}$.
    \par
    We proceed to prove $g_{x_0,R}\in \mathcal{D}(\mathbf{H})$. To do it, since  $g_{x_0,R}$ is continuously differentiable and bounded, we are left to prove that there exists a $C>0$ such that $\sup_{z}\left\{-\mathbf{H}g_{x_0,R}(z)\right\}\leq C$.
    On the one hand, we have
    \begin{equation}\label{eqn:g1}
    \begin{split}
        \int^t_0\dd g_{x_0,R}(x(s))\dot{x}(s)\dd s=g_{x_0,R}(x(t))-g_{x_0,R}(x(0))=\frac{1}{2}d^2(x(t),x_0)\geq 0.
    \end{split}
    \end{equation}
    On the other hand, we have
\begin{equation}\label{eqn:g2}
        \int^t_0\dd g_{x_0,R}(x(x))\dot{x}(s)\dd s=\int^t_0\mathcal{L}(x(s),\dot{x}(s))\dd s+\int^t_0\mathbf{H}g_{x_0,R}(x(s))\dd s. 
    \end{equation}
    Form \eqref{eqn:g1} and \eqref{eqn:g2}, we obtain that
\begin{equation}\label{eqn:g3}
    \begin{split}
        -\int^t_0\mathbf{H}g_{x_0,R}(x(s))\dd s &\leq\int^t_0\mathcal{L}(x(s),\dot{x}(s))\dd s\leq Ct,
    \end{split}
    \end{equation}
 where in the last inequality we use  \Cref{lemma:local_existence_control_on_Lagrangian} for the above estimate. Furthermore, form \eqref{eqn:g3}  we obtain $\sup_{z}\left\{-\mathbf{H}g_{x_0,R}(z)\right\}\leq C$ which deduce that $g_{x_0,R}\in \mathcal{D}(\mathbf{H})$.
\end{proof}

\begin{proof}[Proof of Lemma \ref{lemma:control_on_local_curves}]
Fix $x_0 \in K$ and any curve $x(t) : [0,T_0(x)) \rightarrow B_{R/2}(x_0)$ with $x(0) = x_0$ as in Lemma \ref{lemma:local_existence}. Let $g_{x_0,R} \in \cD(\mathbf{H})$ be any smooth bounded function as in Lemma \ref{lemma:smooth_distance_cutoff} and $\hat{K}$ be the compact set obtained by covering $K$ by balls of radius $R/2$. 
For the sake of symbol simplicity, we will write $g_{x_0,R}$ as $g$.
\par
It thus follows by Proposition \ref{pro_three_conditions} \ref{item:control_Psi_on_L_sets} and \eqref{eqn:g1} that for any $t < T_0(x)$
\begin{align*}
    \frac{1}{2}d^2(x(t),x_0) 
    & = \int_0^t \dd g(x(s)) \dot{x}(s) \dd s \\
    & \leq \int_0^t \psi_{\hat{K},R}\left( \cL(x(s),\dot{x}(s))\right) \dd s.
\end{align*}
As $\psi_{\hat{K},R}$ is non decreasing and the fact that $\frac{\psi_{\hat{K},R}(r)}{r}$ converges to $0$ for $r\to \infty$, there exist $0<m<1$ and $r^*\geq 1$ such that $\frac{\Psi_{\hat{K},R}(r)}{r}\leq m$ for $r \geq r^*$. Proceeding our estimate, by splitting the integral into regions $[0,t] = I_1 \cup I_2$ with
\begin{align*}
    I_1 & := \left\{s \in [0,t] \, \middle| \, \cL(x(s),\dot{x}(s)) \geq r^*\right\}, \\
    I_2 & := \left\{s \in [0,t] \, \middle| \, \cL(x(s),\dot{x}(s)) < r^*\right\},
\end{align*}
we get
\begin{align*}
    \frac{1}{2}d^2(x(t),x_0) & \leq \int_{I_1} \frac{\psi_{\hat{K},R}(\cL(x(s),\dot{x}(s)))}{\cL(x(s),\dot{x}(s))}\cL(x(s),\dot{x}(s)) \dd s +\int_{I_2} \psi_{\hat{K},R}(\cL(x(s),\dot{x}(s))) \dd s \\
    & \leq m \int_0^t \cL(x(s),\dot{x}(s)) \dd s + t \psi_{\hat{K},R}(r^*).
\end{align*}
By \Cref{lemma:local_existence_control_on_Lagrangian}, we conclude that
\begin{equation*}
    \frac{1}{2}d^2(x(t),x_0)  \leq t\left(mC_1+ \psi_{\hat{K},R}(r^*)\right).
\end{equation*}
The result thus follows for $C = mC_1 + \psi_{\hat{K},R}(r^*)$.   
\end{proof}

We are ready to prove \Cref{pro_pro_three_conditions_super}.
\begin{proof}[Proof of \Cref{pro_pro_three_conditions_super}]
    We argue by contradiction. Fix $x_0 \in M$ and $T > 0$. Suppose there does not exist an absolutely continuous curve $x(t)$, $t\in [0,T]$ started at $x_0 \in M$ such that \eqref{eqn_global_local} holds.

    In other words,
    \begin{equation} \label{eqn:def_maximal_time_existence}
        T_{\max{}} = \sup\left\{ T_0(x) \, \middle| \, \exists \, x : [0,T_0(x)) \rightarrow M \text{ satisfying }  \eqref{eqn_global_local}, x(0) = x_0 \right\} \leq T.
    \end{equation}
    By \Cref{lemma:control_on_local_curves_compactness} there is a compact set $K \subseteq M$ such that any curve considered in \eqref{eqn:def_maximal_time_existence} stays in $K$. Fix $\varepsilon<\frac{R^2}{8C}\leq T_0(x)$ as in Lemma \ref{lemma:control_on_local_curves}.

    Patching the curve $\tilde{x} : [0,T_0(\tilde{x}))$ started from $x(T_0(x) - \varepsilon)$ obtained from Lemma \ref{lemma:local_existence} to the curve $x$ at time $T_0 - \varepsilon$, we obtain from \Cref{lemma:control_on_local_curves} that this curve, is a solution to \eqref{eqn_global_local} on the time interval $[0,T_0(x) -\varepsilon + T_0(\tilde{x})$, which contradicts \eqref{eqn:def_maximal_time_existence}.

    This establishes the claim.  
\end{proof}

Here, we turn to prove \Cref{thm_viscosity_solution}.

\begin{proof}[Proof of \Cref{thm_viscosity_solution}]
Recalling the analysis at the beginning of \Cref{sec:variational resolvent is a viscosity solution}. Items \ref{item_Ra} and \ref{item_Rb} were obtained by Propositions \ref{pro_three_conditions} and \ref{pro_pro_three_conditions_super}. This, together with \cref{item_Rc}, the resolvent $\mathbf{R}(h)$ is a viscosity solution for $f-\lambda \mathbf{H}f=h$ for $\lambda>0$ and $h\in C_b(M)$. 
\end{proof}

\subsection{Variational representation of the Hamiltonian on Riemannian manifold }\label{se7}
 In this section, we will prove our main result \Cref{thm_LDPmanifold}. According to the strategy of proof of \Cref{thm_LDPmanifold} in \Cref{sec:The_strategy} and the \Cref{Fig:The_strategy_of proof_Thm}, the result of LDP with a rate function \eqref{eqn_rate_function_1} is based on three main parts that we have proven:
 \begin{itemize}
     \item Operator convergence;
    \item Exponential tightness;
    \item Comparison principle.
 \end{itemize}
We are left to give an action-integral rate function form \eqref{eqn_rate_function_1} for completing the proof of \Cref{thm_LDPmanifold}.
 \begin{proof}[Proof of \Cref{thm_LDPmanifold}]
 The large deviation principle with an action-integral rate function can be obtained from
\Cref{pro_convergence_of_operator,lem_eigen,pro_expo,pro_compa_prin} and \Cref{thm_viscosity_solution}. 
 Based on the proof of \cite[Theorem 8.14]{FK2006} that if semigroups $V(t)$ and $\mathbf{V}(t)$ agree, then the rate function 
  \begin{equation*}
	I(\gamma)=
	\begin{cases}
		I_0(\gamma(0))+\int^\infty_0\mathcal{L}\left(\gamma(s),\dot{\gamma}(s)\right)\mathrm{d}s,&\mbox{if~$\gamma\in\mathcal{AC}(M)$,}\\
		\infty,&\mbox{otherwise.}
	\end{cases}
\end{equation*}
  Then the proof is completed.
 \end{proof}

\appendix
\section{Riemannian manifold}\label{se_Riemannian_manifold}
In this section, we introduce some fundamental deﬁnitions, properties, and notation. This can be found in any textbook on Riemannian manifold, for example, \cite{L2003,W2014MR3154951}.
\par
Throughout the paper, $(M,g)$ is a $d$-dimensional connected complete Riemannian manifold. We start with the definition of \textit{chart}, which is used for the existence proof below, \Cref{pro_pro_three_conditions_super}.
For each open set $\mathcal{O} \subset M$, if $\varphi:\mathcal{O}\to \mathbb{R}^d$ is a homeomorphism onto an open subset of $\mathbb{R}^d$, then $M$ is called a $d$-dimensional topological manifold and $(\mathcal{O},\phi)$ is called a coordinate neighborhood on $M$. A $d$-dimensional differential structure on $M$ is a family $\mathcal{U}:= \{ (\mathcal{O}_\alpha, \varphi_\alpha) \}$ of coordinate neighborhoods such that
\begin{enumerate}[(i)]
	\item $\cup_\alpha\mathcal{O}_\alpha \supset M$;
	\item For any $\alpha$, $\beta$, $\varphi_\alpha \circ \varphi^{-1}_\beta:\varphi_\beta (\mathcal{O}_\beta \cap \mathcal{O}_\alpha)\to \varphi_\alpha (\mathcal{O}_\beta \cap \mathcal{O}_\alpha )$ is $C^\infty$-smooth, i.e. $(\mathcal{O}_\alpha,\varphi_\alpha)$ and $(\mathcal{O}_\beta,\varphi_\beta)$ are $C^\infty$-compatible;
	\item If a coordinate neighborhood $(\mathcal{O},\varphi)$ is $C^\infty$-compatible with each $(\mathcal{O}_\alpha,\varphi_\alpha)$ in $\mathcal{U}$, then $(\mathcal{O},\varphi)\in \mathcal{U}$.
\end{enumerate}
Each $(\mathcal{O},\varphi)\in \mathcal{U}$ is called a local (coordinate) chart. 
\par
The tangent space of $M$ at $x\in M$ is denoted by $T_xM$. We denote by $\langle \cdot,\cdot\rangle_x=g(\cdot,\cdot)$ the scalar product on $T_xM$ with the associated norm $|\cdot|_x$, where the subscript $x$ is sometimes omitted. The tangent bundle of $M$ is denoted by $TM:=\cup_{x\in M}T_xM$, which is naturally a manifold. Let $T^*_xM=(T_xM)^*$ be the \textit{cotangent space} at $x\in M$, namely the dual space of the tangent space $T_xM$ (the space of linear functions on $T_xM$).
Let $T^*M=\cup_{x\in M}T^*_xM$, which is called the \textit{cotangent bundle} on $M$.
\par 
Given a piecewise smooth curve $\gamma:[a, b] \to M$ joining $x$ to $y $ (i.e. $\gamma (a) = x$ and $\gamma(b)=y$, we can define the length of $\gamma$ by $l(\gamma) = \int^b_a |\dot{\gamma}(t)|\mathrm{d}t$. Then the Riemannian distance $d(x,y)$, which induces the original topology on $M$, is defined by minimizing this length over the set of all such curves joining $x$ to $y$.

Let $\nabla$ be the Levi-Civita connection associated with the Riemannian metric. Let $\gamma$ be a smooth curve in $M$. A vector ﬁeld $X$ is said to be parallel along $\gamma$ if and only if $\nabla_{\dot{\gamma}_t} X = 0$. If $\dot{\gamma}$ itself is parallel along $\gamma$, we say that $\gamma$ is a geodesic, and in this case $|\dot{\gamma}|$ is constant. When $ |\dot{\gamma}|= 1$, $\gamma$ is said to be normalized. A geodesic joining $x$ to $y$ in $M$ is said to be minimal if its length equals $d(x,y)$.
\par
A Riemannian manifold is complete if for any $x\in M $ all geodesics emanating from $x$ are deﬁned for all $-\infty< t <\infty$. By the Hopf-Rinow Theorem \cite[Theorem 6.13]{L1997}, we know that if M is complete then any pair of points in M can be joined by a minimal geodesic. Moreover, $(M,d)$ is a complete metric space and bounded closed subsets are compact.

Given a (piecewise) smooth curve $\gamma : [a, b] \to M$, we denote parallel transport along $\gamma$ from $\gamma (t_0 )$ to $\gamma (t_1 )$ by $\tau_{\gamma,t_0t_1}$, or simply $\tau_{t_0t_1}$ whenever the meant curve is clear. If points $x$, $y\in M$ can be connected by a unique geodesic of minimal length, we will also write $\tau_{xy}$ meaning parallel transport from $x$ to y along this specific geodesic.

The exponential map exp $x: T_x M\to M$ at $x$ is deﬁned by $\exp_x v = \gamma_v(1,x)$ for each $v\in T_xM$, where $\gamma (\cdot) =\gamma_v (\cdot,x)$ is the geodesic starting at $x$ with velocity $v$. Then $\exp_x(tv) =\gamma_v(t,x)$ for each real number $t$. Note that the mapping $\exp_x$ is differentiable on $T_xM$ for any $x\in M$.

In many cases, the minimal geodesic is not unique. For instance, for the unit sphere $\mathbb{S}^d $, each half circle linking the highest and the lowest points is a minimal geodesic. This fact leads to the notion of cut-locus.

\begin{definition}
	Let $x\in M$. For any $X\in \mathbb{S}_x:=\{X\in T_xM:~|X|=1\}$, let
	\begin{equation*}
		r(X):=\sup\{t>0:d(x,~\exp_x(tX))=t\}.
	\end{equation*}
	If $r(X)<\infty$ then we call $\exp_x(r(X)X)$ a cut-point of $x$. The set
	\begin{equation*}
		\mathrm{cut}(x):=\{\exp_x(r(X)X)~:~X\in \mathbb{S}_x,~r(X)<\infty\}
	\end{equation*}
	is called the \textit{cut-locus} of the point of $x$. Moreover, the quantity
	\begin{equation*}
		i_x:=\inf\{r(X):X\in \mathbb{S}_x\}
	\end{equation*}
	is called the \textit{injectivity radius} of $x$. For any set $A \subseteq M$ we write $i(A) := \inf_{x\in A} i_x $ the injectivity radius of $A$.
\end{definition}

\begin{lemma}[\cite{K1982}] \label{lemma:compact_positive_injectivity_radius}
	The injectivity radius $i_x$ depends continuously on $ x$. In particular, if $K \subseteq M$ is compact we have $i(K) > 0$. 
\end{lemma}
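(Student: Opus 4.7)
The plan is to prove continuity of $x \mapsto i_x$ by separately establishing lower and upper semi-continuity; the compact consequence $i(K) > 0$ will then follow immediately from the extreme value theorem applied to the continuous positive function $x \mapsto i_x$ on the compact set $K$ (positivity at each individual point being a classical consequence of completeness together with the inverse function theorem for $\exp_x$ at the origin).

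For lower semi-continuity, I would fix $x_0 \in M$ and $r < i_{x_0}$ and show that $i_y \geq r - \eta$ for all $y$ in a sufficiently small neighborhood of $x_0$ and any $\eta > 0$. By definition of $i_{x_0}$, the map $\exp_{x_0} : B_r(0) \subseteq T_{x_0}M \to M$ is a diffeomorphism onto its image, in particular the differential is invertible everywhere on that closed ball. Working in a coordinate chart around $x_0$ and using smooth dependence of the geodesic flow on the initial base point (standard ODE theory for the geodesic equation), the family of maps $\exp_y$ converges to $\exp_{x_0}$ in $C^1$ on compact subsets of the tangent bundle. A quantitative inverse function theorem and a minimality argument for the ball-of-radius-$(r-\eta)$ then yield that $\exp_y$ restricted to $B_{r-\eta}(0) \subseteq T_yM$ is still an injective local diffeomorphism, hence $i_y \geq r - \eta$.

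For upper semi-continuity, which is the genuine obstacle, I would invoke Klingenberg's characterization: $i_x = \min\bigl(\tfrac{1}{2}\ell(x),\, j(x)\bigr)$, where $\ell(x)$ is the length of the shortest closed geodesic loop based at (or passing through a neighborhood of) $x$, and $j(x)$ is the distance from $x$ to its first conjugate point along any geodesic emanating from $x$. The conjugate radius $j(x)$ is continuous because conjugate points are detected by zeros of Jacobi fields, and Jacobi fields depend continuously on the base point and the initial unit velocity, combined with compactness of the unit sphere $\mathbb{S}_x$ (transported along nearby unit spheres via the smoothly-varying metric). The loop-length function $\ell(x)$ is likewise continuous by a compactness argument: a shortest loop at $x$ of length $\ell(x)$ perturbs to a nearby (not-necessarily-shortest) loop at $y$ of length close to $\ell(x)$, giving $\ell(y) \leq \ell(x) + o(1)$, and the reverse inequality follows from lower semi-continuity and the same perturbation argument in the opposite direction.

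The hard part will be the upper semi-continuity, and in particular showing that nothing pathological can occur as one varies the base point: one must rule out the emergence of a drastically shorter cut point in the limit. This is exactly where Klingenberg's dichotomy between conjugate points and loops is essential, and it is why I would follow the cited reference \cite{K1982} rather than attempting a self-contained proof here. Once continuity is in hand, the compactness statement is immediate: $x \mapsto i_x$ is continuous and strictly positive on the compact set $K$, so it attains a positive minimum, i.e.\ $i(K) = \min_{x \in K} i_x > 0$.
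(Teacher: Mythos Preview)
The paper does not prove this lemma at all; it is stated with a bare citation to Klingenberg \cite{K1982} and used as a black box (specifically in the proof of \Cref{lem_smooth_function} and implicitly throughout the comparison-principle section). Your sketch is therefore not competing with any argument in the paper --- you are supplying a proof where the authors chose to import one.

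Your outline is the standard route and is essentially what one finds in Klingenberg's book: lower semi-continuity via $C^1$-stability of $\exp$ under perturbation of the base point, and upper semi-continuity via the dichotomy that the nearest cut point is either conjugate or the midpoint of a geodesic loop. Two small caveats. First, in your lower semi-continuity step the phrase ``injective local diffeomorphism'' hides the real work: a quantitative inverse function theorem gives \emph{local} injectivity, but global injectivity on $B_{r-\eta}(0)$ does not follow from $C^1$-closeness alone --- you need an argument (e.g.\ via the characterization of cut points as first failure of distance-minimization, or a degree/covering argument) to exclude two distinct far-apart preimages. Second, $i_x$ may equal $+\infty$ (e.g.\ on $\bR^d$), so continuity should be phrased for the map $x \mapsto \min(i_x,C)$ for arbitrary $C$, or in the topology of $[0,\infty]$; this does not affect the conclusion $i(K)>0$. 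With those points tightened, your proposal is correct and matches the classical treatment the paper is citing.
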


Note that $i(K)>0$ is used to find a smooth distance on $M$.
\begin{definition}
	Let $\mathcal{T}(M)$ be the space of smooth vector ﬁelds on $M$ and let $\nabla$ be any connection on $M$. The formula
	\begin{equation*}
		\mathcal{R}(X,Y)Z:=\nabla_Y\nabla_XZ-\nabla_X\nabla_YZ+\nabla_{[X,Y]}Z,
	\end{equation*}
	for $X$, $Y$, $Z\in \mathcal{T}(M)$, defines a function
	$\mathcal{R}:\mathcal{T}(M)\times \mathcal{T}(M)\times \mathcal{T}(M)$ called the Riemannian curvature of $M$, where $[X,Y]=XY-YX$ is the commutator of $X$ and $Y$.
\end{definition}
\par
To prove the existence solutions of HJB equations on $M$, we need the definitions of push-forward and pullback.
\begin{definition}[Push-forward]
	If $M$ and $N$ are smooth manifolds and $\varphi:M\to N$ is a smooth map, for each $p\in M$ we define a map 
	\begin{equation}\label{eqn_push_forward}
		\varphi_{*p}:T_pM\to T_{\varphi(p)}N,
	\end{equation} called the \textit{push-forward} associated with $\varphi$, by
	\begin{equation*}
		(\varphi_{*p}(v))(f)=v(f\circ \varphi),~~~v\in T_pM,~f\in C^\infty(M).
	\end{equation*}
	
\end{definition}
\begin{definition}[Pullback]
	Let 
	$\varphi: M\to N$ be an invertible smooth map, and let $p\in M$ be arbitrary. 
	A dual map by pullback associated with $\varphi$,
	\begin{equation*}
		\varphi^*_p:T^*_{\varphi(p)} N \to T^*_pM.
	\end{equation*}
	Moreover, $\varphi^{*}_p$ is characterized by
	\begin{equation}\label{eqn_xiX}
		(\varphi^{*}_p\xi)(v)=\xi(\varphi_{*p}(v)), ~~\mbox{for}~\xi \in T^*_{\varphi(p)} N,~v\in T_pM.
	\end{equation}
	We can put all $\varphi_{*p} $ and $\varphi^*_p$ together to obtain $\varphi_*:TM\to TN$ and $\varphi^*:T^*N \to T^*M$, respectively.
	
\end{definition}
The next lemma shows that tangent vectors to curves behave well under composition with smooth maps. 
\begin{lemma}[Proposition 3.11 in \cite{L2003}]\label{lem_acurve}
	Let $\varphi:M\to N$ be a smooth map, and let $\gamma:J \to M$ be a smooth curve, where $J\in \mathbb{R}$ is an interval. For any $t\in J$, the tangent vector to the composite curve $\varphi \circ \gamma$ at $t = t_0$ is given by
	\begin{equation*}
		(\dot{\varphi\circ \gamma})(t_0)=(\varphi\circ \gamma)_*\frac{\mathrm{d}}{\mathrm{d}t}\bigg|_{t_0}=\varphi_*\dot{\gamma}(t_0).
	\end{equation*}
\end{lemma}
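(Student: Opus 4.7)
The plan is to verify the identity directly from the definitions of the tangent vector to a curve and of the pushforward as a linear map between tangent spaces, using nothing more than associativity of composition. Recall that for any smooth curve $\gamma:J\to M$ and $t_0\in J$, the tangent vector $\dot\gamma(t_0)\in T_{\gamma(t_0)}M$ is defined as the pushforward by $\gamma$ of the canonical basis vector $\tfrac{\mathrm d}{\mathrm dt}\big|_{t_0}\in T_{t_0}J$, acting on test functions $f\in C^\infty(M)$ by $\dot\gamma(t_0)(f)=\tfrac{\mathrm d}{\mathrm dt}\big|_{t_0}(f\circ\gamma)$. Apply this same definition to the composite smooth curve $\varphi\circ\gamma:J\to N$.

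The key step is a side-by-side computation on an arbitrary test function $h\in C^\infty(N)$. On one hand, from the definition applied to $\varphi\circ\gamma$, I would write
\begin{equation*}
\bigl((\varphi\circ\gamma)_{*}\tfrac{\mathrm d}{\mathrm dt}\big|_{t_0}\bigr)(h)
=\tfrac{\mathrm d}{\mathrm dt}\Big|_{t_0}\bigl(h\circ(\varphi\circ\gamma)\bigr)
=\tfrac{\mathrm d}{\mathrm dt}\Big|_{t_0}\bigl((h\circ\varphi)\circ\gamma\bigr),
\end{equation*}
where I have used associativity of composition. On the other hand, unpacking $\varphi_{*}\dot\gamma(t_0)$ with the definition of pushforward in \eqref{eqn_push_forward} gives
\begin{equation*}
\bigl(\varphi_{*}\dot\gamma(t_0)\bigr)(h)
=\dot\gamma(t_0)(h\circ\varphi)
=\bigl(\gamma_{*}\tfrac{\mathrm d}{\mathrm dt}\big|_{t_0}\bigr)(h\circ\varphi)
=\tfrac{\mathrm d}{\mathrm dt}\Big|_{t_0}\bigl((h\circ\varphi)\circ\gamma\bigr),
\end{equation*}
where the last equality is, once more, the very definition of pushforward applied now to $\gamma$. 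The two right-hand sides coincide for every $h\in C^\infty(N)$, so the derivations $(\varphi\circ\gamma)_{*}\tfrac{\mathrm d}{\mathrm dt}\big|_{t_0}$ and $\varphi_{*}\dot\gamma(t_0)$ agree on $C^\infty(N)$, hence are equal as elements of $T_{\varphi(\gamma(t_0))}N$. Recognising $(\varphi\circ\gamma)_{*}\tfrac{\mathrm d}{\mathrm dt}\big|_{t_0}$ as $(\dot{\varphi\circ\gamma})(t_0)$ by definition yields both equalities in the statement.

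There is essentially no obstacle here: the lemma is a restatement of the functoriality of the tangent map (the chain rule) and the proof is a one-line verification on test functions once the two definitions are unfolded. The only thing to be careful about is notational: one must not conflate $\tfrac{\mathrm d}{\mathrm dt}\big|_{t_0}\in T_{t_0}J$, which is a derivation on $C^\infty(J)$, with its image $\dot\gamma(t_0)=\gamma_{*}\tfrac{\mathrm d}{\mathrm dt}\big|_{t_0}\in T_{\gamma(t_0)}M$. Keeping this distinction explicit in the computation above makes the two lines match transparently, and no further regularity or manifold-specific hypothesis beyond smoothness of $\varphi$ and $\gamma$ is used.
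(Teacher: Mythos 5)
Your verification is correct and is precisely the standard argument given in the cited reference (Lee, Proposition 3.11); the paper itself states this lemma without proof, deferring to that source. Unwinding both sides on a test function $h\in C^\infty(N)$ and invoking associativity of composition is exactly the right (and the textbook) approach, and you correctly keep the distinction between $\tfrac{\mathrm d}{\mathrm dt}\big|_{t_0}\in T_{t_0}J$ and its pushforward $\dot\gamma(t_0)\in T_{\gamma(t_0)}M$.
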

The chain rule for total derivatives is important in Riemannian manifolds because it allows us to compute the derivative of a composite function.
\begin{lemma}[The chain rule for total derivatives, Proposition A.24 in \cite{L2003}]
	Suppose $V$, $W$, $X$ are finite-dimensional vector spaces, $U\subset V$ and $\Tilde{U}\subset W$ are open sets, and $F:U \to\Tilde{U}$ and $G:\Tilde{U}\to X$ are maps. If $F$ is differentiable at $a \in U$ and $G$ is diifferentiable at $F(a)\in U$, then $G\circ F$ is differentiable at $a$, and
	\begin{equation}\label{eqn_chain_rule}
		D(G \circ F)(a) = DG(F(a)) \circ DF(a). 
	\end{equation}
\end{lemma}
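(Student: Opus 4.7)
The plan is to prove this standard chain rule directly from the definition of differentiability in finite-dimensional normed vector spaces, by expanding $G \circ F$ using the first-order expansions of $F$ at $a$ and $G$ at $F(a)$, and then showing the resulting error term is $o(\|h\|)$. Fix norms on $V$, $W$, $X$; since all these spaces are finite-dimensional, all norms are equivalent and all linear maps between them are bounded, which will be used repeatedly without comment.

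First I would write differentiability of $F$ at $a$ and of $G$ at $b := F(a)$ as
\begin{equation*}
F(a+h) = F(a) + DF(a)h + r_F(h), \qquad G(b+k) = G(b) + DG(b)k + r_G(k),
\end{equation*}
where $\|r_F(h)\|/\|h\| \to 0$ as $h \to 0$ and $\|r_G(k)\|/\|k\| \to 0$ as $k \to 0$ (with the convention $r_F(0)=0$, $r_G(0)=0$). Setting $k(h) := F(a+h) - F(a) = DF(a)h + r_F(h)$, I would substitute into the expansion of $G$ to obtain
\begin{equation*}
(G\circ F)(a+h) = G(b) + DG(b)\bigl(DF(a)h\bigr) + DG(b) r_F(h) + r_G(k(h)).
\end{equation*}
The candidate derivative is the composition $DG(b)\circ DF(a)$, and it remains to check that $R(h) := DG(b) r_F(h) + r_G(k(h))$ is $o(\|h\|)$.

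The main step — and the only place there is any content — is the estimate of the composite remainder $r_G(k(h))$. First, since $DG(b)$ is a bounded linear map, $\|DG(b) r_F(h)\| \le \|DG(b)\|\cdot \|r_F(h)\| = o(\|h\|)$. Second, from the definition of $k(h)$ and boundedness of $DF(a)$ we get the linear bound $\|k(h)\| \le (\|DF(a)\|+1)\|h\|$ for all sufficiently small $h$, which in particular forces $k(h)\to 0$ as $h\to 0$. Therefore, given $\epsilon>0$, choose $\delta_1$ so that $\|r_G(k)\|\le \epsilon \|k\|$ whenever $\|k\|\le \delta_1$, and then $\delta_2$ so that $\|k(h)\| \le \delta_1$ whenever $\|h\|\le \delta_2$; combining the two bounds yields $\|r_G(k(h))\| \le \epsilon (\|DF(a)\|+1)\|h\|$ for all small $h$, proving the term is $o(\|h\|)$.

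The hardest (really the only delicate) part is handling the case $k(h)=0$ for nonzero $h$, which is why I use the convention $r_G(0)=0$, and making sure the estimate $\|k(h)\|\le (\|DF(a)\|+1)\|h\|$ is uniform on a neighborhood of $0$, i.e.\ does not depend on the direction of $h$; this follows from $\|r_F(h)\|\le \|h\|$ near $0$. Combining the two $o(\|h\|)$ bounds gives $R(h)=o(\|h\|)$, which by definition says $G\circ F$ is differentiable at $a$ with derivative $DG(F(a))\circ DF(a)$, completing the proof.
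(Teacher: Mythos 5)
Your proof is correct, and it is the standard direct argument from the definition of the total derivative. The paper does not actually prove this lemma — it simply cites it as Proposition A.24 of Lee's \emph{Introduction to Smooth Manifolds} — so there is no proof in the paper to compare against, but the argument you give (expand $F$ at $a$ and $G$ at $F(a)$, substitute, control $DG(b)r_F(h)$ by boundedness of $DG(b)$, and control $r_G(k(h))$ via the linear bound $\|k(h)\|\le(\|DF(a)\|+1)\|h\|$ near $0$) is exactly the textbook proof, and your care with the $k(h)=0$ case and the uniformity of the estimate is appropriate and complete.
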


\section{Viscosity solutions}
Now we define viscosity sub and supersolutions, which is often used in the proof.
\begin{definition}[Viscosity solutions] \label{def_VS}Let $H\subseteq C_b(M)\times C_b(M\times S)$ be a multivalued operator. We denote $\mathcal{D}(H)$ for the domain of $H$ and $\mathcal{R}(H)$ for the range of $H$. Let $\lambda>0$ and $h\in C_b(M)$. Consider the Hamilton-Jacobi equations
	\begin{equation}\label{HJ}
		f-\lambda H f=h.
	\end{equation}
	\begin{description}
		\item [Classical solutions] 
		We say that $u$ is a classical subsolution of \eqref{HJ} if there is a $g$ such that $(u,g)\in H$ and $u - \lambda g \leq h$. We say that $v$ is a classical supersolution of \eqref{HJ} if there is a function $g$ such that $(v, g) \in H$ and $v - \lambda g \geq h$. We say that $u$ is a classical solution if it is both a subsolution and a supersolution.
		\item[Viscosity subsolutions] We say that $u$ is a (viscosity) subsolution of \eqref{HJ} if $u$ is bounded, upper semicontinuous, and if for every $(f,g)\in H$ there exists a sequence $(x_n,z_n) \in M\times S$ such that
		\begin{equation*}
			\lim_{n\to\infty}u(x_n)-f(x_n)=\sup_{x}u(x)-f(x),
		\end{equation*}
		\begin{equation*}
			\limsup_{n\to\infty}u(x_n)-\lambda g(x_n,z_n)-h(x_n)\leq 0.
		\end{equation*}
		\item[Viscosity supersolutions] We say that $v$ is a (viscosity) supersolution of \eqref{HJ} 
		if $v$ is bounded, lower semicontinuous, and if for every $(f,g)\in H$ there exists a sequence sequence $(x_n,z_n) \in M\times S$ such that
		\begin{equation*}
			\lim_{n\to\infty}v(x_n)-f(x_n)=\inf_{x}v(x)-f(x),
		\end{equation*}
		\begin{equation*}
			\liminf_{n\to\infty}v(x_n)-\lambda g(x_n,z_n)-h(x_n)\geq 0.
		\end{equation*}
		\item [Viscosity solutions] We say that $u$ is a (viscosity) solution of \eqref{HJ} if it is both a subsolution and a supersolution to \eqref{HJ}. 
	\end{description}
\end{definition}
\begin{remark}
	Consider the definition of subsolutions. Suppose that the test function $(f,g)\in H$ has compact sublevel sets, then instead of working with a sequence $(x_n,z_n)$, we can pick $(x_0,z_0)$ such that
	\begin{equation*}
		u(x_0)-f(x_0)=\sup_{x}u(x)-f(x),
	\end{equation*}
	\begin{equation*}
		u(x_0)-\lambda g(x_0,z_0)-h(x_0)\leq 0.
	\end{equation*}
	Similarly, a simplification holds in the case of supersolutions. This is used in the proof \Cref{lem_1d} below.
\end{remark}
\begin{definition}[Comparison principle]
	We say that \eqref{HJ} satisfies the comparison principle if for every viscosity subsolutions $u$ and viscosity supersolutions $v$ to \eqref{HJ}, we have $u\leq v$.
\end{definition}

\begin{remark}[Uniqueness] 
	The comparison principle implies uniqueness of viscosity solutions. Suppose that $u$ and $v$ are both viscosity solutions, then the comparison principle yields that $u\leq v$ and $v\leq u$, implying that $u=v$.
\end{remark}

\section*{Acknowledgments}
The author thanks Rik Versendaal for numerous valuable comments.
\\[0.2cm]
The research of Y. Hu was supported by the China Scholarship Council (CSC).
The research of F. Xi was supported by the National Natural Science Foundation of China (Grant No. 12071031).

%
%
%
%
%
%
	

\printbibliography

@book {BD1997,
	AUTHOR = {Bardi, Martino and Capuzzo-Dolcetta, Italo},
	TITLE = {Optimal control and viscosity solutions of
		{H}amilton-{J}acobi-{B}ellman equations},
	PUBLISHER = {Birkh\"{a}user, Boston, MA},
	YEAR = {1997},
	MRREVIEWER = {Vladimir Veliov},
}

@book {BGL2014MR3155209,
	AUTHOR = {Bakry, Dominique and Gentil, Ivan and Ledoux, Michel},
	TITLE = {Analysis and geometry of {M}arkov diffusion operators},
	VOLUME = {348},
	PUBLISHER = {Springer, Cham},
	YEAR = {2014},
	MRNUMBER = {3155209},
}

@article{CK_2017,
	title = {Dynamical moderate deviations for the {Curie}–{Weiss} model},
	volume = {127},
	number = {9},
	journal = {Stochastic Process. Appl.},
	author = {Collet, Francesca and Kraaij, Richard C.},
	year = {2017},
	pages = {2900--2925},
}

@article{CL1971,
	author = {M. G. Crandall and T. M. Liggett},
	journal = {Amer. J. Math.},
	number = {2},
	pages = {265--298},
	title = {Generation of Semi-groups of nonlinear transformations on general Banach spaces},
	volume = {93},
	year = {1971},
}

@article {CK2024,
    AUTHOR = {Della Corte, Serena and Kraaij, Richard C.},
     TITLE = {Large deviations for {M}arkov processes with switching and
              homogenisation via {H}amilton--{J}acobi--{B}ellman equations},
   JOURNAL = {Stochastic Process. Appl.},
  FJOURNAL = {Stochastic Processes and their Applications},
    VOLUME = {170},
      YEAR = {2024},
     PAGES = {104301},
      ISSN = {0304-4149},
   MRCLASS = {60 (49 92)},
  MRNUMBER = {4691822},
       DOI = {10.1016/j.spa.2024.104301},
       URL = {https://doi.org/10.1016/j.spa.2024.104301},
}

@book {D1992MR1189795,
	AUTHOR = {Deimling, Klaus},
	TITLE = {Multivalued differential equations},
	PUBLISHER = {Walter de Gruyter \& Co., Berlin},
	YEAR = {1992},
	ISBN = {3-11-013212-5},
	MRCLASS = {34A60 (34-01 49J24)},
	MRNUMBER = {1189795},
	MRREVIEWER = {Zvi Artstein},
}

@article {DV1975,
    AUTHOR = {Donsker, Monroe D. and Varadhan, S. R. S.},
     TITLE = {On a variational formula for the principal eigenvalue for
              operators with maximum principle},
   JOURNAL = {Proc. Nat. Acad. Sci. U.S.A.},
    VOLUME = {72},
      YEAR = {1975},
     PAGES = {780--783},
      ISSN = {0027-8424},
   MRCLASS = {49G05 (47D05 60J35)},
  MRNUMBER = {361998},
MRREVIEWER = {L. Janos},
}

@book{DZ1998,
	author={Amir Dembo and Ofer Zeitouni},
	title={Large Deviations Techniques and Applications},
	publisher={Springer Berlin, Heidelberg},
	year={1998},
}

@book {E1982MR675100,
	AUTHOR = {Elworthy, K. D.},
	TITLE = {Stochastic differential equations on manifolds},
	PUBLISHER = {Cambridge University Press, Cambridge-New York},
	YEAR = {1982},
	MRNUMBER = {675100},
	MRREVIEWER = {Jean-Michel Bismut},
}

@book{FK2006,
	title = {Large Deviations for Stochastic Processes },
	author = {Jin Feng and Thomas G. Kurtz},
	year = {2006},
	publisher = {Springer Berlin, Heidelberg},
}

@article{H1993,
	title = {A {Closed}-{Form} {Solution} for {Options} with {Stochastic} {Volatility} with {Applications} to {Bond} and {Currency} {Options}},
	volume = {6},
	number = {2},
	journal = {Rev. Financ. Stud.},
	author = {Heston, Steven L.},
	year = {1993},
	pages = {327-343},
}

@misc{HKX2023,
	title={Large deviations for Cox-Ingersoll-Ross processes with state-dependent fast switching}, 
	author={Yanyan Hu and Richard C. Kraaij and Fubao Xi},
	year={2023},
	eprint={2307.11936},
	archivePrefix={arXiv},
}

@article {HM2016,
	AUTHOR = {Huang, Gang and Mandjes, Michel and Spreij, Peter},
	TITLE = {Large deviations for {M}arkov-modulated diffusion processes
		with rapid switching},
	JOURNAL = {Stochastic Process. Appl.},
	FJOURNAL = {Stochastic Processes and their Applications},
	VOLUME = {126},
	YEAR = {2016},
	NUMBER = {6},
	PAGES = {1785--1818},
	ISSN = {0304-4149},
	MRCLASS = {60F10 (60J27 60J60)},
	MRNUMBER = {3483737},
	MRREVIEWER = {Konstantin Borovkov},
}

@book{DH2008,
	author={Frank den Hollander},
	title={ Large Deviations}, 
	publisher={American Mathematical Society, New York },
	year={2008},
}

@book {H2002MR1882015,
	AUTHOR = {Hsu, Elton P.},
	TITLE = {Stochastic analysis on manifolds},
	PUBLISHER = {American Mathematical Society, Providence, RI},
	YEAR = {2002},
	MRREVIEWER = {Yu. E. Gliklikh},
}

@book {K1982,
	AUTHOR = {Klingenberg, Wilhelm},
	TITLE = {Riemannian geometry},
	PUBLISHER = {Walter de Gruyter \& Co., Berlin-New York},
	YEAR = {1982},
}

@article{KRV2019,
	title = "Classical large deviation theorems on complete Riemannian manifolds",
	author = "Kraaij, {Richard C.} and Frank Redig and Rik Versendaal",
	year = "2019",
	volume = "129",
	pages = "4294--4334",
	journal = "Stochastic Process. Appl.",
	number = "11",
}

@article{KS2020,
	title = {A large deviation principle for {Markovian} slow-fast systems},
	author = {Kraaij, Richard C. and Schlottke, Mikola C.},
	year = {2020},
	eprint= {2011.05686},
archivePrefix={arXiv},
}

@article{KS2021,
	author = {Richard C. Kraaij and Mikola C. Schlottke},
	journal = {Nonlinear Differ. Equ. Appl.},
	pages = {1-45},
NUMBER = {2},
	title = {Comparison Principle for Hamilton-Jacobi-Bellman Equations via a Bootstrapping Procedure},
	volume = {28},
	year = {2021},
}

@book {L2003,
	AUTHOR = {Lee, John M.},
	TITLE = {Introduction to smooth manifolds},
	PUBLISHER = {Springer-Verlag, New York},
	YEAR = {2003},
	MRREVIEWER = {J\"{u}rgen Eichhorn},
}

@book {L1997,
	AUTHOR = {Lee, John M.},
	TITLE = {Riemannian manifolds},
	NOTE = {An introduction to curvature},
	PUBLISHER = {Springer-Verlag, New York},
	YEAR = {1997},
}

@article{PS2021,
	title = {Large-deviation principles of switching {Markov} processes via {Hamilton}-{Jacobi} equations},
	author = {Peletier, Mark A. and Schlottke, Mikola C.},
	year = {2021},
	eprint={1901.08478},
	archivePrefix={arXiv},
}

@article {RZ2004,
	AUTHOR = {R\"{o}ckner, Michael and Zhang, Tusheng S.},
	TITLE = {Sample path large deviations for diffusion processes on
		configuration spaces over a {R}iemannian manifold},
	JOURNAL = {Publ. Res. Inst. Math. Sci.},
	FJOURNAL = {Kyoto University. Research Institute for Mathematical
		Sciences. Publications},
	VOLUME = {40},
	YEAR = {2004},
	NUMBER = {2},
	PAGES = {385--427},
	MRNUMBER = {2049640},
}

@misc{V2020,
	title={Large deviations for Brownian motion in evolving Riemannian manifolds}, 
	author={Rik Versendaal},
	year={2020},
	eprint={2004.00358},
	archivePrefix={arXiv},
}

@book {W2014MR3154951,
	AUTHOR = {Wang, Feng-Yu},
	TITLE = {Analysis for diffusion processes on {R}iemannian manifolds},
	PUBLISHER = {World Scientific Publishing Co. Pte. Ltd., Hackensack, NJ},
	YEAR = {2014},
}
\end{document}